\newtheoremstyle{natural}
{\parsep}   
{\parsep}   
{\normalfont}  
{0pt}       
{\bfseries} 
{.}         
{5pt plus 1pt minus 1pt} 
{}          
\newtheoremstyle{remark}
{\parsep}   
{\parsep}   
{\normalfont}  
{0pt}       
{\bfseries} 
{.}         
{5pt plus 1pt minus 1pt} 
{}          
\theoremstyle{plain}
\newtheorem{thm}{Theorem}[section]
\newtheorem{cor}[thm]{Corollary}
\newtheorem{prp}[thm]{Proposition}
\newtheorem{cnj}[thm]{Conjecture}
\theoremstyle{remark}
\newtheorem{expl}[thm]{Example}
\newtheorem*{rmk}{Remark}
\numberwithin{equation}{section}
\crefname{thm}{theorem}{theorems}
\crefname{lem}{lemma}{lemmata}
\crefname{prp}{proposition}{propositions}
\crefname{cor}{corollary}{corollaries}
\newcommand{\N}{\mathbb{N}}
\newcommand{\Q}{\mathbb{Q}}
\newcommand{\R}{\mathbb{R}}
\newcommand{\Z}{\mathbb{Z}}
\newcommand{\mc}{\mathcal}
\newcommand{\cb}[1]{\left\{{#1}\right\}}
\newcommand{\cbm}[2]{\left\{{#1}\; \middle|\; {#2}\right\}}
\newcommand{\pb}[1]{\left\langle{#1}\right\rangle}
\newcommand{\sqb}[1]{\left[{#1}\right]}
\newcommand{\rb}[1]{\left({#1}\right)}
\newcommand{\vb}[1]{\left| {#1} \right|}
\newcommand{\bB}{\begin{Bmatrix}}
\newcommand{\eB}{\end{Bmatrix}}
\newcommand{\bb}{\begin{bmatrix}}
\newcommand{\eb}{\end{bmatrix}}
\newcommand{\bp}{\begin{pmatrix}}
\newcommand{\ep}{\end{pmatrix}}
\newcommand{\bsm}{\left(\begin{smallmatrix}}
\newcommand{\esm}{\end{smallmatrix}\right)}
\newcommand{\bv}{\begin{vmatrix}}
\newcommand{\ev}{\end{vmatrix}}
\newcommand{\fa}{\;\forall}
\newcommand{\Gal}{\operatorname{Gal}}
\newcommand{\hra}{\hookrightarrow}
\newcommand{\NN}{\operatorname{N}}
\newcommand{\ol}[1]{\overline{#1}}
\newcommand{\rad}{\operatorname{rad}}
\newcommand{\sbe}{\subseteq}
\newcommand{\sgn}{\operatorname{sgn}}
\newcommand{\Tr}{\operatorname{Tr}}
\newcommand{\ba}{\[\begin{aligned}} 
\newcommand{\ea}{\end{aligned}\]} 
\newcommand{\bd}{\begin{tikzcd}} 
\newcommand{\ed}{\end{tikzcd}} 
\newcommand{\bdd}{\begin{center}\begin{tikzcd}} 
\newcommand{\edd}{\end{tikzcd}\end{center}} 
\newcommand{\bdp}{\begin{center}\begin{tikzpicture}} 
\newcommand{\edp}{\end{tikzpicture}\end{center}} 
\newcommand{\bi}{\begin{itemize}} 
\newcommand{\ei}{\end{itemize}} 
\newcommand{\bea}{\begin{enumerate}[label=(\alph*)]} 
\newcommand{\ben}{\begin{enumerate}[label=(\arabic*)]} 
\newcommand{\ber}{\begin{enumerate}[label=(\roman*)]} 
\newcommand{\beani}{\begin{enumerate}[label=(\alph*), wide, labelwidth=!, labelindent=0pt]} 
\newcommand{\benni}{\begin{enumerate}[label=(\arabic*), wide, labelwidth=!, labelindent=0pt]} 
\newcommand{\berni}{\begin{enumerate}[label=(\roman*), wide, labelwidth=!, labelindent=0pt]} 
\newcommand{\ee}{\end{enumerate}} 
\title[Universal lattices and indecomposable elements in real multiquadratic fields]{Minimal rank of universal lattices and number of indecomposable elements in real multiquadratic fields}
\author{Siu Hang Man}
\address{Siu Hang Man, Charles University, Faculty of Mathematics and Physics, Department of Algebra, Sokolovská 49/83, 186 75 Praha 8, Czechia}
\email{shman@karlin.mff.cuni.cz}
\thanks{The author is supported by the Czech Science Foundation GAČR grant 21-00420M, and the OP RDE project No. CZ.02.2.69/0.0/0.0/18\_053/0016976 International mobility of research, technical and administrative staff at the Charles University.}
\begin{document}

\begin{abstract}
We establish an upper bound on the number of real multiquadratic fields that admit a universal quadratic lattice of a given rank, or contain a given amount of indecomposable elements modulo totally positive units, obtaining density zero statements. We also study the structure of indecomposable elements in real biquadratic fields, and compute a system of indecomposable elements modulo totally positive units for some families of real biquadratic fields.
\end{abstract}
\date{\today}
\makeatletter
\@namedef{subjclassname@2020}{%
  \textup{2020} Mathematics Subject Classification}
\makeatother
\subjclass[2020]{11A55, 11E12, 11E20, 11H55, 11R20, 11R80}
\keywords{universal quadratic form, quadratic lattice, indecomposable elements}
\maketitle


\section{Introduction}

The question of which integers can be represented by a given quadratic form is a long-standing topic in number theory, which fascinated numerous mathematicians since ancient times. One particularly interesting question is that whether a positive definite quadratic form is \emph{universal}, i.e. represents all positive integers. The first important result in this direction is Lagrange's four-squares theorem, which says the quadratic form $x^2+y^2+z^2+w^2$ is universal. Much work has been done since then, including the renowned $15$- and $290$-theorems of Conway-Schneeberger \cite{Bhargava2000} and Bhargava-Hanke \cite{BHp2011}. 

In a more general setting, when we are given a totally real number field $F$ and their ring of integers $\mc O_F$, we may ask whether a totally positive definite quadratic form is universal, i.e. represents all totally positive elements in $\mc O_F$. As a starting point, Maaß \cite{Maass1941} showed that the sum of three squares is universal over the ring of integers of $\Q(\sqrt{5})$, using theta series. Conversely, Siegel \cite{Siegel1945} proved that the sum of any number of squares is universal only over $F = \Q, \Q(\sqrt{5})$. Siegel's proof made heavy use of the notion of \emph{indecomposable} elements (under the name ``extremal elements''), i.e. totally positive algebraic integers in $F$ that cannot be written as the sum of two other totally positive integers in $F$. 

For a more precise description, we consider totally positive definite \emph{quadratic $\mc O_F$-lattices} $(\Lambda,Q)$ over a totally real number field $F$, that is, finitely generated $\mc O_F$-modules $\Lambda$ equipped with a quadratic form $Q$ such that all the values $Q(v) \ne 0$ for $0\ne v\in\Lambda$ are totally positive elements in $\mc O_F$. Such a lattice is called \emph{universal} if it represents all the totally positive integers. A convenient assumption that we will often use is that $(\Lambda,Q)$ is \emph{classical}, that is, all the values in the associated symmetric bilinear form lie in $\mc O_F$. Since it has been shown that universal classical lattices exist over every totally real number field \cite{HKK1978}, we can denote by $R_{\operatorname{cls}}(F)$ the minimal rank of a universal classical lattice over $F$, and $R(F)$ the minimal rank of a universal lattice over $F$ without the classical assumption. Here the rank of a quadratic lattice $(\Lambda,Q)$ over $F$ is defined as the dimension of $F\Lambda$ as an $F$-vector space.

One can then ask about how the minimal ranks $R(F)$ and $R_{\operatorname{cls}}(F)$ behave in a collection of totally real number fields $F$. In general, it is widely expected that the minimal rank of universal lattices is usually high, as illustrated by an influential conjecture of Kitaoka (see \cite{CKR1996}), which claims that there are only finitely many totally real number fields $F$ with a ternary universal lattice (i.e. with $R(F)\le 3$). 

For specific cases more is known. For instance, it is known that there are only finitely many real quadratic fields $F$ with $R(F) \le 7$ \cite{KKP2022}, while there are infinitely many real quadratic fields with $R(F) = R_{\operatorname{cls}}(F) = 8$ \cite{Kim2000}. Nevertheless, it is known that for most real quadratic fields the minimal ranks of universal lattices are large \cite{KYZp2023}. Meanwhile, Kala and Svoboda \cite{KS2019,Kala2023} showed that higher degree fields for which $R(F)$ is large are also quite easy to find.

In many instances, the minimal rank of universal lattices over a totally real number field $F$ is closely related to the number of indecomposable elements up to multiplication of totally positive units (denoted by $\iota(F)$), the rough idea being that every totally positive integer is a sum of indecomposable elements, while indecomposable elements themselves are quite difficult to represent. Using this idea, Kala and Tinková \cite{KT2023} showed that the minimal rank of universal lattices is bounded above in terms of the number of indecomposable elements. In the other direction, by proving the existence of sufficiently many indecomposable elements satisfying certain properties, one can show that in such fields the minimal rank of universal lattices is high; there are numerous literature tackling different cases, see for example \cite{BK2015,BK2018,Kala2016,KYZp2023} about quadratic fields, and \cite{CLSTZ2019,KS2019,KT2023,KTZ2020} about number fields of higher degrees.

In this paper, we generalise the results in \cite{KYZp2023}, and show that for most real multiquadratic fields $K$, the minimal rank $R(K)$ of a universal lattice is large. For $n\in\N$, and $X\ge 1$ a positive parameter, let $\mc K(n,X)$ denote the set of real multiquadratic fields $K$ of degree $2^n$ with discriminant $\Delta_K \le X$. Note that by a result of Wright \cite{Wright1989} we have $\#\mc K(n,X) \asymp X^{2^{1-n}} (\log X)^{2^n-2}$ (see \Cref{thm:Wright}). 

\begin{thm}\label{thm:main1}
Let $n\in\N$ be fixed, and $\varepsilon > 0$. Then for almost all real multiquadratic fields of degree $2^n$ we have
\ba
R_{\operatorname{cls}}(K) \ge \begin{cases} \Delta_K^{\frac 1{72}-\varepsilon} & \text{ for } n=2,\\ \Delta_K^{\frac 1{12(2^n-1)^2}-\varepsilon} & \text{ otherwise,}\end{cases} \quad \text{ and } \quad R(K) \ge \begin{cases} \Delta_K^{\frac 1{24}-\varepsilon} & \text{ for } n=1,\\ \Delta_K^{\frac{1}{12(2^n-1)(2^{n+1}-1)}-\varepsilon} & \text{ otherwise.}\end{cases}
\ea
\end{thm}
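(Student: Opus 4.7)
The plan is to adapt the strategy of Kala--Yatsyna--Zmija \cite{KYZp2023} for real quadratic fields to the real multiquadratic setting. I would break the proof into three steps.

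\textbf{Step 1: Reduction to counting indecomposables.}
First I would bound the minimal rank from below by the number of indecomposable elements of small trace. Let $\iota(K,T)$ denote the number of indecomposable $\alpha \in \mc O_K$, modulo totally positive units, with $\Tr(\alpha) \le T$. If $(\Lambda, Q)$ is a universal classical lattice of rank $r$, then every totally positive $\alpha$ with $\Tr(\alpha) \le T$ equals $Q(v)$ for some $v \in \Lambda$ with $\Tr Q(v) = \Tr(\alpha) \le T$. The vectors $v$ with $\Tr Q(v) \le T$ lie in an ellipsoid whose volume is controlled, and Minkowski-type lattice point counting gives at most $(CT)^{O(r)}$ such $v$'s. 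Passing to values and quotienting by units yields an inequality of the form $r \gg \log \iota(K,T)/\log T$. The distinction between $R_{\operatorname{cls}}$ and $R$ enters via the index of $\Lambda$ in its dual lattice, which modifies the packing exponent; this accounts for why the non-classical bounds are weaker.

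\textbf{Step 2: Producing indecomposables.}
Next I would show $\iota(K,T)$ is large for most $K \in \mc K(n,X)$. Using a Dress--Scharlau-style characterisation, a totally positive $\alpha \in \mc O_K$ is indecomposable whenever its conjugates have a suitable ``balanced'' profile, and I would construct such elements from principal ideals of small norm via Minkowski's theorem. Landau's theorem gives $\asymp_K Y$ principal ideals of norm at most $Y$, and within each ideal a Minkowski-type selection produces a balanced generator. After quotienting by the totally positive unit group (of rank $2^n - 1$), this yields $\iota(K,T) \gtrsim Y/\operatorname{Reg}_K^{\kappa(n)}$ for some explicit $\kappa(n)$ and appropriate $T = T(Y, \Delta_K)$.

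\textbf{Step 3: Optimisation and density zero.}
Combining the two bounds and optimising $Y, T$ as powers of $\Delta_K$ gives the stated exponents. The constants $12, 24, 72$ and the factors $(2^n-1)$, $(2^{n+1}-1)$ arise from balancing the Minkowski exponent $1/2^n$, the unit rank $2^n - 1$, and the classical/non-classical index contribution. The density-zero conclusion combines Wright's counting $\#\mc K(n,X) \asymp X^{2^{1-n}}(\log X)^{2^n-2}$ with an average bound on regulators within the multiquadratic family. I expect the main obstacle to be precisely this regulator bound: fields $K \in \mc K(n,X)$ with unusually large $\operatorname{Reg}_K$ must be shown to form a density zero subset, since otherwise too many indecomposable classes collapse under the unit action. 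I would establish this by exploiting the compositum structure $K = \Q(\sqrt{d_1}, \ldots, \sqrt{d_n})$ via Kuroda's class number formula together with regulator bounds for the quadratic subfields. The sharper exponents in the special cases $n=2$ (classical) and $n=1$ (non-classical) likely reflect where additional structural information -- e.g. explicit control of fundamental units in quadratic fields, or tighter packing estimates when the dual index is small -- allows improved constants.
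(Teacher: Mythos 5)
Your Step 1 contains the fatal quantitative flaw. The packing estimate you invoke --- at most $(CT)^{O(r)}$ lattice points $v$ with $\Tr Q(v)\le T$ --- yields an inequality of the shape $r \gg \log\iota(K,T)/\log T$, and this ratio is bounded by an absolute constant no matter how you optimise: the number of totally positive $\alpha\in\mc O_K$ with $\Tr_{K/\Q}(\alpha)\le T$ is $\ll_n T^{2^n}$, so $\log\iota(K,T)/\log T \le 2^n+o(1)$. A bound of this shape can therefore never produce $R(K)\ge \Delta_K^{c}$ for any $c>0$; taking $T$ a power of $\Delta_K$, as your Step 3 proposes, only makes the ratio a bounded constant. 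The engine of the actual proof is different in kind: one needs many representable elements of a \emph{single fixed} norm, together with a bound on the number of fixed-norm vectors in an integral lattice that is \emph{polynomial in the rank} (the paper's Theorem 3.1, from Regev--Stephens-Davidowitz, plus the root-system bound for norm $2$). Concretely (Proposition 3.2), one takes the $u+1$ semiconvergents $\beta_l$ attached to a large odd partial quotient $u$ of $-\ol{\omega_{A_j}}$ in a quadratic subfield, pairs them with a totally positive codifferent element $\delta$ so that $\Tr_{K/\Q}(\delta\beta_l)=2^{n-1}$ for every $l$, and concludes $u < \frac 12 C(2^nR,2^{n-1}m)$, i.e.\ the rank is bounded below by a fixed \emph{power} of the number of such elements, not by a logarithm. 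The classical/non-classical distinction also does not enter through a dual-index correction as you suggest, but by replacing a universal $(\Lambda,Q)$ with the classical $2\mc O_K$-universal $(\Lambda,2Q)$, which shifts the fixed norm from $2^{n-1}$ to $2^n$ and changes the degree of $C(R,m)\ll R^{2m-1}$; the special exponents $\frac 1{72}$ ($n=2$, classical) and $\frac 1{24}$ ($n=1$, non-classical) occur exactly when the fixed norm equals $2$ and the root-system bound $C\ll R^2$ applies.

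Step 2 is also incorrect as stated: a ``balanced'' totally positive $\alpha$ with all conjugates comparable and greater than $1$ is automatically \emph{decomposable}, since $\alpha-1\succ 0$ gives $\alpha = 1+(\alpha-1)$; indecomposables in real quadratic fields are precisely the lopsided semiconvergents of Dress--Scharlau, and small norm of a Minkowski-selected generator does not imply indecomposability in degree $2^n$. Your regulator obstacle in Step 3 is an artifact of this construction: the paper needs no regulator bounds at all. Instead it parametrises the fields in $\mc K(n,X)$ by tuples of pairwise coprime squarefree integers $\gamma_1,\ldots,\gamma_{2^n-1}$, covers them by dyadic blocks, selects in each block a quadratic subfield with $A_i \gg Y^{2^{n-1}/(2^n-1)}$, and applies the Kala--Yatsyna--{\.Z}mija count of squarefree $D$ whose continued fraction of $-\ol{\omega_D}$ has all odd-indexed partial quotients at most $B$; this gives Theorem 1.3, and Theorem 1.1 follows by choosing $R$ as the stated power of $X$ so that the exceptional count is $\ll X^{2^{1-n}-\varepsilon}$, negligible against Wright's $\#\mc K(n,X)\asymp X^{2^{1-n}}(\log X)^{2^n-2}$.
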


\begin{thm}\label{thm:main2}
Let $n\in\N$ be fixed, and $\varepsilon > 0$. Then for almost all real multiquadratic fields of degree $2^n$ we have
\begin{align*}
\iota(K) &\ge \begin{cases} \Delta_K^{\frac 1{72}-\varepsilon} & \text{ for } n=2,\\ \Delta_K^{\frac 1{12(2^n-1)^2}-\varepsilon} & \text{ otherwise.}\end{cases}
\end{align*}
\end{thm}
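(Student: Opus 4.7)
The exponents in \Cref{thm:main2} coincide with those appearing for $R_{\operatorname{cls}}(K)$ in \Cref{thm:main1}, strongly suggesting that \Cref{thm:main2} is the central technical input and that the $R_{\operatorname{cls}}$-bound of \Cref{thm:main1} follows from it via a Kala--Tinkov\'a type inequality $R_{\operatorname{cls}}(K)\gtrsim \iota(K)$. The plan is therefore to transfer the density-zero result of \cite{KYZp2023} for real quadratic fields to the multiquadratic setting by exploiting the abelian structure of $K/\Q$. The first step is a discriminant decomposition: since $K/\Q$ is abelian with Galois group $(\Z/2\Z)^n$, the conductor--discriminant formula gives $\Delta_K=\prod_{i=1}^{2^n-1}\Delta_{F_i}$, where $F_1,\dots,F_{2^n-1}$ are the quadratic subfields of $K$, so by pigeonhole at least one subfield $F_j$ satisfies $\Delta_{F_j}\geq \Delta_K^{1/(2^n-1)}$.

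The second step is a lifting lemma: establish a lower bound of the shape $\iota(K)\geq C(\varepsilon)\,\iota(F)^{\beta}$ for a suitable $\beta\in(0,1]$ and a well-chosen quadratic subfield $F\subseteq K$. Concretely, one attaches to each totally positive indecomposable $\alpha\in\mc O_F$ a totally positive indecomposable element $\tilde\alpha\in\mc O_K$ (for instance a minimal element in a suitable $F$-lift class), and then shows that a substantial subfamily of these $\tilde\alpha$ remains pairwise inequivalent modulo totally positive units of $K$; the natural tools are relative norms and traces for $K/F$ combined with Galois averaging over $\Gal(K/F)$. The third step invokes the density-zero statement from \cite{KYZp2023}: outside a density-zero subset of real quadratic fields $F$ with $\Delta_F\leq Y$, one has $\iota(F)\geq \Delta_F^{1/12-\varepsilon}$, with quantitative control on the size of the exceptional set. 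Finally one combines these: an exceptional $K\in\mc K(n,X)$ must have its quadratic subfields either exceptional in the sense of \cite{KYZp2023} or of small discriminant, and the product formula for $\Delta_K$ together with \Cref{thm:Wright} forces the count of such $K$ to be $o(\#\mc K(n,X))$.

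The main obstacle is the lifting in Step 2. When $\alpha\in\mc O_F$ is indecomposable in $\mc O_F$, it may well decompose in $\mc O_K$, because $\mc O_K$ contains totally positive elements lying outside $\mc O_F$; controlling this loss and ensuring that a fractional power of $\iota(F)$ worth of indecomposables survives in $\mc O_K$ and represents distinct totally positive unit orbits is the technical heart of the proof. The discrepancy between the exponent $1/72$ for $n=2$ and $1/(12(2^n-1)^2)$ for $n\geq 3$ is telling: the biquadratic case presumably admits a sharper lifting built on the explicit description of indecomposable elements in real biquadratic fields promised in the later sections of the paper, whereas for $n\geq 3$ a more generic but weaker lifting suffices.
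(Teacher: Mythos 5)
Your Step 2 is a genuine gap, and it is precisely the point where this approach breaks. The lifting lemma $\iota(K)\ge C(\varepsilon)\,\iota(F)^{\beta}$ is not available: as the paper itself shows, indecomposable elements of a quadratic subfield need \emph{not} remain indecomposable in $\mc O_K^+$ (see the examples after \Cref{thm:pid}, e.g. $26+5\sqrt{26}$ decomposing in $\Q(\sqrt{14},\sqrt{91})$). The only preservation result proved anywhere, \Cref{thm:pii}, is restricted to biquadratic fields and guarantees preservation for just two of the three quadratic subfields --- and the subfield that can fail is exactly the one with the largest parameter, i.e.\ the subfield your conductor--discriminant pigeonhole in Step 1 is liable to select; for $n\ge 3$ no preservation statement of any strength exists, and nothing in your sketch (relative norms, Galois averaging, minimal lifts) engages the real difficulty, namely ruling out decompositions whose summands lie outside $F$ --- even in the biquadratic case this consumes the entire Diophantine-approximation machinery of Section 4. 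A second, conceptual error: the inequality you invoke is backwards. \Cref{prp:ufi} (from \cite{KT2023}) builds a universal lattice \emph{out of} indecomposables and gives $R_{\operatorname{cls}}(K)\le s(\mc O_K)\cdot\#\mc S(\mc O_K)\ll_n \iota(K)$, not $R_{\operatorname{cls}}(K)\gtrsim\iota(K)$; consequently \Cref{thm:main1} cannot be deduced from \Cref{thm:main2}, and in the paper the logical dependency runs the other way.

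The paper's actual proof avoids lifting entirely. \Cref{thm:main2} is proved exactly as \Cref{thm:main1}, using \Cref{thm:fi} in place of \Cref{thm:fuf}: if $\iota(K)\le R$, then $R_{\operatorname{cls}}(K)\ll_n R$ by \Cref{prp:ufi}; a classical universal lattice of rank $R$ forces, via the codifferent element $\delta$ of \cite{KT2023} and the short-vector bound of \cite{RSDp2023}, every odd-indexed partial quotient of $-\ol{\omega_j}$ for \emph{every} quadratic subfield to be $<\tfrac12 C(2^nR,2^{n-1})$ (\Cref{prp:ufcf}); and fields with such uniformly small partial quotients are rare by \Cref{prp:scf} combined with the coprime-tuple and dyadic-block count of \Cref{sect:fuf}, whose inequality \eqref{eq:ailb} plays the quantitative role of your Step 1 pigeonhole. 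The crucial trick is that the semiconvergents of a quadratic subfield are used only as totally positive integers that the lattice must represent --- each yields a vector of norm $2^{n-1}$ in a twisted trace-form $\Z$-lattice of rank $2^nR$ --- never as elements that must stay indecomposable in $K$. Your outline could plausibly be salvaged for $n=2$ using \Cref{thm:pii} (one of the two good subfields always has $\Delta_F\gg\Delta_K^{1/4}$), as the paper remarks after \Cref{thm:pii}, but the fiber-counting over fields sharing a subfield would still need the dyadic-block argument, and for $n\ge 3$ the approach has no foundation.
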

Here, by ``almost all'' we mean that the set of such fields has natural density $1$, with respect to the ordering of real multiquadratic fields of degree $2^n$ by discriminant.

\Cref{thm:main1,thm:main2} follow from our main results below, whose proofs involve a wide variety of tools. To state our main results, we work on a slightly more general setting, considering quadratic lattices that represent all multiples of a fixed positive integer $m$. We say that a totally positive definite quadratic lattice $(\Lambda,Q)$ over a totally real number field $F$ is \emph{$m\mc O_F$-universal} if $(\Lambda,Q)$ represents all elements of $m\mc O_F^+$, i.e. all the totally positive multiples of $m$. This allows us to convert an arbitrary universal quadratic lattice $(\Lambda,Q)$ into a $2\mc O_F$-universal classical quadratic lattice $(\Lambda,2Q)$, which is convenient because classical lattices are often easier to work with.

Our first main result gives an explicit upper bound on the number of real multiquadratic fields $K$ which admits an $m\mc O_K$-universal classical lattice of a given rank. 

\begin{thm}\label{thm:fuf} 
Let $n,m\in\N$ be fixed. For $R\in\N$ and $X\ge 1$ we define
\ba
\mc K_{\operatorname{univ}}(n,X,R,m) := \cbm{K\in\mc K(n,X)}{\exists \; m\mc O_K \text{-universal quadratic lattice of rank R}}.
\ea
Then, for any $\varepsilon > 0$ and $X \gg (C(2^nR, 2^{n-1}m)\log X)^{\frac{4(2^n-1)^2}{2^n-2^{n-4}-1}}$ for a sufficiently large constant depending only on $n$, we have
\ba
\#\mc K_{\operatorname{univ}}(n,X,R,m) \ll_{n,\varepsilon} C(2^nR, 2^{n-1}m)^{\frac 32} X^{2^{1-n}\rb{1-\frac{2^{n-4}}{2^n-1}}+\varepsilon} + C(2^nR, 2^{n-1}m)^3 X^{2^{1-n}\rb{1-\frac{2^{n-3}}{2^n-1}}+\varepsilon},
\ea
where $C(2^nR, 2^{n-1}m)$ is an explicit constant defined in \eqref{eq:crmd}.
\end{thm}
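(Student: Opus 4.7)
\textbf{Proof plan for \Cref{thm:fuf}.}

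The strategy combines three ingredients: a restriction-of-scalars reduction of an $m\mc O_K$-universal lattice to a $2^{n-1}m\mc O_F$-universal lattice on each real quadratic subfield $F\subset K$; a quantitative large-sieve count of ``bad'' real quadratic fields $\#\mc F(Y)$, adapting \cite{KYZp2023}; and an $n$-fold combinatorial count using the fact that every multiquadratic $K$ is generated by $n$ of its quadratic subfields.

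If $K\in\mc K_{\operatorname{univ}}(n,X,R,m)$ and $F\subset K$ is any real quadratic subfield, I take the trace lattice $(\Lambda,\Tr_{K/F}\circ Q)$: for $\beta\in\mc O_F^+$, the element $m\beta\in m\mc O_K^+$ is represented by some $v\in\Lambda$, and $\Tr_{K/F}(Q(v))=[K:F]\cdot m\beta=2^{n-1}m\beta$. Thus $(\Lambda,\Tr_{K/F}\circ Q)$ is a $2^{n-1}m\mc O_F$-universal classical lattice of $\Z$-rank $2^nR$ over $F$, which explains the parameters $(2^nR,2^{n-1}m)$ of $C$ in \eqref{eq:crmd}. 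Every quadratic subfield of $K$ therefore lies in the set $\mc F(Y)$ of real quadratic fields of discriminant $\le Y$ admitting such a restricted lattice.

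For $K=\Q(\sqrt{c_1},\ldots,\sqrt{c_n})$ of degree $2^n$, the conductor-discriminant formula $\Delta_K=\prod_{F'\subset K,[F':\Q]=2}\Delta_{F'}$ gives $\Delta_K\asymp\prod_i c_i^{2^{n-1}}$ in the coprime case, so the generating quadratic subfields $F_i=\Q(\sqrt{c_i})$ satisfy $\prod_{i=1}^n\Delta_{F_i}\ll X^{2^{1-n}}$. This yields
\[\#\mc K_{\operatorname{univ}}(n,X,R,m)\ll\frac{1}{N_n}\#\cbm{(F_1,\ldots,F_n)\in\mc F(X^{2^{1-n}})^n}{\prod_i\Delta_{F_i}\ll X^{2^{1-n}}},\]
where $N_n=\prod_{k=0}^{n-1}(2^n-2^k)$ is the number of ordered $\F_2$-bases of a rank-$n$ space. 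An $n$-fold nested Abel-summation (or a multi-variable large-sieve) estimate of this count, using suitable bounds of the form $\#\mc F(Y)\ll_\varepsilon C^aY^{1-c+\varepsilon}$, yields the two terms in the stated estimate, corresponding to two balancings of the underlying character-sum inequality in the quadratic case.

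The main obstacle is establishing the quantitative bounds on $\#\mc F(Y)$ with explicit $C$-dependence. This demands a careful execution of the large-sieve method of \cite{KYZp2023}, tracking the parameters $2^nR$ and $2^{n-1}m$ throughout and choosing two different sieve cutoffs that produce the pairs of exponents $(2^{n-4}/(2^n-1),\,3/2)$ and $(2^{n-3}/(2^n-1),\,3)$ appearing in the final bound; the hypothesis $X\gg(C\log X)^{4(2^n-1)^2/(2^n-2^{n-4}-1)}$ is exactly what guarantees the sieve error terms are subdominant at the chosen cutoff. A secondary technical point is the handling of non-coprime discriminants of the generating quadratic subfields, which introduces logarithmic corrections to $\Delta_K\asymp\prod_i c_i^{2^{n-1}}$ that are absorbed into the $\varepsilon$-exponent.
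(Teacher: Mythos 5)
Your reduction step is sound: tracing $Q$ from $K$ down to a quadratic subfield $F$ does produce a $2^{n-1}m\mc O_F$-universal classical $\mc O_F$-lattice of rank $2^{n-1}R$, and invoking the quadratic case then yields the parameter $C(2\cdot 2^{n-1}R,2^{n-1}m)=C(2^nR,2^{n-1}m)$; this is an equivalent packaging of the paper's \Cref{prp:ufcf}, which instead twists by a codifferent element $\delta\in\mc O_{\Q(\sqrt{A_j})}^{\vee,+}$ and traces directly to $\Q$, concluding that every odd-indexed partial quotient of $-\ol{\omega_j}$, for every quadratic subfield, is $<\frac 12 C(2^nR,2^{n-1}m)$. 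The counting step, however, breaks down. Your displayed inequality bounding $\#\mc K_{\operatorname{univ}}$ by tuples with $\prod_i \Delta_{F_i}\ll X^{2^{1-n}}$ is false when the radicands share large common factors, and the failure is polynomial, not the ``logarithmic corrections'' you propose to absorb into $\varepsilon$: for $n=2$ take $p=dp'$, $q=dq'$ with $d,p',q'$ pairwise coprime, each of size $\asymp \Delta_K^{1/4}$; then $\Delta_K\asymp (dp'q')^2$ while the subfield discriminants are $\asymp dp'$, $dq'$, $p'q'$, so \emph{every} generating pair has $\Delta_{F_1}\Delta_{F_2}\gg d\,\Delta_K^{1/2}$ and no tuple satisfies your constraint — such fields are simply missed, and dividing by $N_n$ compounds the undercount. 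This is precisely why the paper parametrises $K$ by the $2^n-1$ pairwise coprime integers $\gamma_j$ with $A_i=\prod_{j\in\mc I(i)}\gamma_j$ as in \eqref{eq:gc}, rather than by generating subfields.

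Even granting coprimality, your ``all $n$ generators bad'' count cannot yield the stated bound. With $\#\mc F(Y)\ll C^{3/2}Y^{7/8+\varepsilon}+C^3Y^{3/4+\varepsilon}$ and the product constraint, the $n$-fold summation is dominated by a term $\asymp C^{3n/2}X^{2^{1-n}\cdot\frac 78+\varepsilon}$: the $C$-exponent grows with $n$, whereas the theorem asserts $C^{3/2}$ and $C^3$ uniformly in $n$, and the exponent $\frac 78$ does not match $1-\frac{2^{n-4}}{2^n-1}$. Since $C(2^nR,2^{n-1}m)$ is polynomial in $R$ and the applications in \Cref{thm:main1} take $R$ to be a power of $X$, the worsened $C$-dependence would degrade the final density exponents, so this is not a cosmetic discrepancy. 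The paper's actual mechanism — absent from your plan — is to cover the fields by dyadic blocks in the $\gamma_j$, observe by pigeonhole that in each block some \emph{single} subfield satisfies $\prod_{j\in\mc I(i)}\Gamma_j\gg Y^{2^{n-1}/(2^n-1)}$, apply \Cref{prp:scf} to that one subfield only, and count the fibers over a fixed $A_i$ via the divisor bound $Y^\varepsilon\prod_{j\notin\mc I(i)}\Gamma_j$; the saving $\frac 18\cdot\frac{2^{n-1}}{2^n-1}=\frac{2^{n-4}}{2^n-1}$ and the $n$-independent powers of $C$ come exactly from here. Your closing appeal to ``two balancings of the underlying character-sum inequality'' does not supply this selection-plus-fiber-count structure, so the proposal as written does not prove the theorem.
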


Here we give the brief strategy to the proof, assuming $m=1$ for simplicity. From \cite{KT2023}, we know that for real quadratic fields $F$, we may find a totally positive element $\delta \in \mc O_F^{\vee,+}$ in the codifferent such that there are $w$ indecomposable elements $\alpha_1,\ldots,\alpha_w \in \mc O_F^+$ satisfying $\Tr_{F/\Q}(\delta\alpha_i) = 1$. In a multiquadratic field $K$ of degree $2^n$ containing $F$, these elements satisfy $\Tr_{K/\Q}(\delta\alpha_i) = 2^{n-1}$. Viewing $\mc O_K$ as a $\Z$-lattice $\Lambda$ equipped with the $\delta$-twisted trace form, we obtain a set of lattice points $v\in\Lambda$ of norm $2^{n-1}$. However, the number of such points in a totally positive definite $\Z$-lattice of rank $R$ is bounded (see \cite[Theorem~3.1]{KYZp2023} and \cite[Theorem~1.1]{RSDp2023}). This gives an upper bound to the size of $w$. Meanwhile, \cite[Corollary 2.13]{KYZp2023} says that $w$ is usually large. Combining these estimates, we are able to show that few real multiquadratic fields admit universal quadratic lattices of rank $R$.

Using the linkage between universal lattices and indecomposable elements in \cite{KT2023}, we obtain as a consequence of \Cref{thm:fuf} that for given $R\in\N$, few multiquadratic fields $K$ satisfies $\iota(K) \le R$.

\begin{thm}\label{thm:fi} 
Let $n\in\N$. For $R\in\N$ and $X\ge 1$ we define
\ba
\mc K_{\operatorname{indec}}(n,X,R) := \cbm{K\in\mc K(n,X)}{\iota(K) \le R}.
\ea
Then, for any $\varepsilon>0$ and $X \gg (C(2^nR, 2^{n-1})\log X)^{\frac{4(2^n-1)^2}{2^n-2^{n-4}-1}}$ for a sufficiently large constant depending only on $n$, we have
\ba
\#\mc K_{\operatorname{indec}}(n,X,R) \ll_{n,\varepsilon} C(2^nR, 2^{n-1})^{\frac 32} X^{2^{1-n}\rb{1-\frac{2^{n-4}}{2^n-1}}+\varepsilon} + C(2^nR, 2^{n-1})^3 X^{2^{1-n}\rb{1-\frac{2^{n-3}}{2^n-1}}+\varepsilon},
\ea
where $C(2^nR, 2^{n-1})$ is an explicit constant defined in \eqref{eq:crmd}.
\end{thm}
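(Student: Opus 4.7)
The plan is to reduce \Cref{thm:fi} directly to \Cref{thm:fuf} via the results of Kala and Tinkov\'a \cite{KT2023}, which link the number of indecomposable elements to the existence of low-rank universal lattices. The key input from \cite{KT2023} is that any totally real number field $K$ with $\iota(K) \le R$ admits a classical $\mc O_K$-universal quadratic lattice whose rank is controlled linearly by $R$; such a lattice can be produced, for instance, by taking a suitable diagonal form whose entries are a system of representatives of the indecomposable elements modulo totally positive units, and combining it with Lagrange's four-squares theorem to handle the nonnegative rational-integer coefficients that appear when decomposing any totally positive integer as a $\Z_{\ge 0}$-linear combination of indecomposables.

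Once such a construction is in hand, we obtain the set-theoretic inclusion
\[
\mc K_{\operatorname{indec}}(n,X,R) \sbe \mc K_{\operatorname{univ}}(n,X,R',1)
\]
for some $R'$ depending linearly on $R$ (the linear factor depending only on $n$). Applying \Cref{thm:fuf} with $m = 1$ and rank $R'$, and absorbing the change in the first argument of $C(\cdot,\cdot)$ into the implicit constants depending on $n$ and $\varepsilon$, yields the claimed bound with exponents involving $2^{1-n}\bigl(1-\tfrac{2^{n-4}}{2^n-1}\bigr)$ and $2^{1-n}\bigl(1-\tfrac{2^{n-3}}{2^n-1}\bigr)$.

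The main technical point I anticipate is matching the rank bound to the precise form $C(2^nR, 2^{n-1})$ stated in the theorem, which requires either a construction from \cite{KT2023} whose rank equals $\iota(K)$ exactly (rather than a constant multiple), or a verification that the explicit form of $C$ is tame enough in its first argument so that a bounded multiplicative factor can be absorbed into the implicit constants. A parallel subtlety is that if the construction yields only a non-classical $\mc O_K$-universal lattice, one must apply the doubling trick $(\Lambda,Q)\mapsto(\Lambda,2Q)$ to obtain a $2\mc O_K$-universal classical lattice, invoking \Cref{thm:fuf} with $m=2$ and noting that this changes $C(2^nR,2^{n-1})$ to $C(2^nR,2^n)$, which is again absorbed into the implicit constants. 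Beyond this bookkeeping, no further estimates are required, as all the hard analytic and geometric work has been carried out in the proof of \Cref{thm:fuf}.
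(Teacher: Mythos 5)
Your proposal follows essentially the same route as the paper's proof: the paper deduces \Cref{thm:fi} by applying \Cref{prp:ufi} (that is, \cite[Proposition 7.1]{KT2023}) to get $R_{\operatorname{cls}}(K) \le s(\mc O_K)\cdot\#\mc S(\mc O_K) \ll_n \iota(K)$, using $\#\mc S(\mc O_K) \le 2^{2^n-1}\iota(K)$ (from $[\mc O_K^\times:(\mc O_K^\times)^2] = 2^{2^n}$ and $[\mc O_K^\times:\mc O_K^{\times,+}]\ge 2$) together with the boundedness of the Pythagoras number in terms of the degree, and then invokes \Cref{thm:fuf} with $m=1$, absorbing the $n$-dependent linear factor in the rank into the implicit constants exactly as you anticipate; this absorption is legitimate because $C(\cdot\,,2^{n-1})$ is polynomial in its first argument by \eqref{eq:crma}, so the inclusion $\mc K_{\operatorname{indec}}(n,X,R) \sbe \mc K_{\operatorname{univ}}(n,X,R',1)$ with $R'\ll_n R$ loses only constants depending on $n$.

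Two side remarks in your write-up need correction, though neither breaks the main chain. First, your sketched construction --- a diagonal form over representatives of the indecomposables modulo \emph{totally positive units}, combined with Lagrange's four-squares theorem --- does not literally work: if $\sigma = u\tau$ with $u$ a totally positive unit that is not a square (such units need not even be sums of squares in $\mc O_K$), then the subform $\tau(x_1^2+\cdots+x_4^2)$ cannot represent $cu\tau$ for $c\in\Z_{\ge 0}$. The construction of \Cref{prp:ufi} instead takes representatives modulo \emph{squares of units} $(\mc O_K^\times)^2$, which enlarges the set only by the factor $[\mc O_K^{\times,+}:(\mc O_K^\times)^2]\le 2^{2^n-1}$ depending only on $n$, and uses $s(\mc O_K)$ squares per representative; the Pythagoras number (bounded in terms of the degree by \cite[Corollary 3.3]{KY2021}) is genuinely needed because several indecomposables occurring in a decomposition may reduce to the same representative. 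Second, your fallback claim that replacing $m=1$ by $m=2$ in \Cref{thm:fuf} changes $C(2^nR,2^{n-1})$ to $C(2^nR,2^n)$ ``which is again absorbed into the implicit constants'' is false: by \eqref{eq:crma} these quantities grow like $R^{2^n-1}$ and $R^{2^{n+1}-1}$ respectively, so their ratio is unbounded in $R$, whereas the implicit constants may depend only on $n$ and $\varepsilon$. Fortunately that branch never arises: the diagonal lattice of \Cref{prp:ufi} has Gram matrix with entries in $\mc O_K$, i.e.\ it is classical, so $m=1$ suffices and the bound with $C(2^nR,2^{n-1})$ follows as in the paper.
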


The linkage between universal lattices and indecomposable elements also suggests that we study the structure of indecomposable elements in totally real number fields. While this is well understood for real quadratic fields, little is known about the structure of indecomposable elements in other cases. For example, for totally real cubic fields we have an explicit description of the indecomposable elements only for Shank's family of simplest cubic fields \cite{GMTp2022,KT2023}. In the second part of the paper we study the structure of indecomposable elements in biquadratic fields. Since biquadratic fields contain quadratic subfields, it is then natural to ask whether the indecomposable elements in the quadratic subfields, which are well understood, remain indecomposable in the biquadratic field. In general the answer is no, but the following theorem says that the answer is yes for at least two of the three quadratic subfields.

\begin{thm}\label{thm:pii} 
Let $K$ be a real biquadratic field, with quadratic subfields $K_1, K_2, K_3$. Then, for at least two of the quadratic subfields $K_j$, $j\in\cb{1,2,3}$, the indecomposable elements in $\mc O_{K_j}^+$ remain indecomposable in $\mc O_K^+$. 
\end{thm}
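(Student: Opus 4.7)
The plan is to reformulate decomposability in $K$ as a lattice-point condition and then use Galois-equivariant arithmetic modulo $2$ to bound how many subfields can fail the preservation property.

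First, I would establish the standard reformulation: $\alpha \in \mathcal{O}_{K_j}^+$ is decomposable in $\mathcal{O}_K^+$ if and only if there exists $\mu \in \mathcal{O}_K^+$ with $\mu \prec \alpha$ (meaning $\tau(\mu) < \tau(\alpha)$ at every real embedding $\tau$ of $K$), in which case $\alpha = (\alpha - \mu) + \mu$. Indecomposability of $\alpha$ in $\mathcal{O}_{K_j}^+$ means no such $\mu$ exists inside $\mathcal{O}_{K_j}^+$, so any witness $\mu$ showing failure of indecomposability in $K$ necessarily lies in $\mathcal{O}_K \setminus \mathcal{O}_{K_j}$.

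Next, I would invoke the following averaging observation. Letting $\sigma_j \in \operatorname{Gal}(K/\mathbb{Q})$ denote the nontrivial automorphism fixing $K_j$, set $T_j\mu := \mu + \sigma_j\mu \in \mathcal{O}_{K_j}^+$. Then $T_j\mu/2 \in K_j$ satisfies $T_j\mu/2 \prec \alpha$, because averaging two positive values each bounded by $\tau(\alpha)$ preserves the bound. Consequently, if $T_j\mu \in 2\mathcal{O}_{K_j}$ then $T_j\mu/2 \in \mathcal{O}_{K_j}^+$ is a witness inside $K_j$, violating indecomposability of $\alpha$. Hence any witness for failure of indecomposability of an indecomposable $\alpha \in \mathcal{O}_{K_j}^+$ must satisfy the parity obstruction $T_j\mu \notin 2\mathcal{O}_{K_j}$.

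The final and most delicate step is to show that this parity obstruction can be met by a witness/indecomposable pair for at most one $j \in \{1,2,3\}$. Here I would exploit the syzygy $T_1\mu + T_2\mu + T_3\mu = 2\mu + \operatorname{Tr}_{K/\mathbb{Q}}(\mu)$, reduce modulo $2\mathcal{O}_K$, and use the character decomposition $K = \mathbb{Q} \oplus \mathbb{Q}\sqrt{p} \oplus \mathbb{Q}\sqrt{q} \oplus \mathbb{Q}\sqrt{pq}$ for $K = \mathbb{Q}(\sqrt{p},\sqrt{q})$ to pin down the possible patterns of $[T_j\mu]_{j=1,2,3}$ in $\bigoplus_j \mathcal{O}_{K_j}/2\mathcal{O}_{K_j}$. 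Combining these linear relations with the extremality of $\alpha_j$ in the cone of totally positive integers of $K_j$ (which, via the continued-fraction description of indecomposable elements in real quadratic fields, sharply restricts the admissible $\mu \prec \alpha_j$) should force at most one of the three subfields to admit such a witness. The principal obstacle is the case analysis according to the integral basis of $\mathcal{O}_K$, which depends on the residues of $p, q, pq$ modulo $4$ and on $\gcd(p,q)$: in each of the finitely many resulting types one must explicitly describe $\mathcal{O}_K/2\mathcal{O}_K$ as an $\mathbb{F}_2[\operatorname{Gal}(K/\mathbb{Q})]$-module and verify the parity constraints directly, the generic case $\mathcal{O}_K = \mathbb{Z}[\sqrt{p},\sqrt{q},\sqrt{pq}]$ being the most transparent, with the half-integer cases requiring a more careful bookkeeping but yielding the same structural conclusion.
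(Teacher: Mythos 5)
Your averaging step is correct, and it is worth noting that it already proves more than you claim: in types (1) and (2) (integral bases $\cb{1,\sqrt{p},\sqrt{q},\frac{\sqrt{p}+\sqrt{r}}2}$ and $\cb{1,\sqrt{p},\frac{1+\sqrt{q}}2,\frac{\sqrt{p}+\sqrt{r}}2}$), every $\mu\in\mc O_K$ satisfies $T_q\mu = 2x+2z\sqrt{q}\in 2\mc O_{K_q}$, so your parity obstruction can never be met for $K_q$ and indecomposables of $\mc O_{K_q}^+$ are automatically preserved; this is a clean re-proof of the one case the paper quotes from \cite{CLSTZ2019} in \Cref{prp:qi}. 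The gap is in your final step, and it is structural rather than technical. First, there is a quantifier mismatch: a failure of preservation for $K_1$ and one for $K_2$ are witnessed by \emph{different} pairs $(\alpha,\mu)$ and $(\alpha',\mu')$, while your syzygy $T_1\mu+T_2\mu+T_3\mu = 2\mu+\Tr_{K/\Q}(\mu)$ constrains the three traces of one and the same $\mu$; it cannot couple independent witnesses, so even a proof that a fixed $\mu$ meets the obstruction for at most one $j$ would not yield the theorem. Second, that claim is false at the level of parity anyway: in type (1) the element $\mu = N+\frac 12\sqrt{p}+\frac 12\sqrt{r}$ (totally positive for large $N$) has $T_p\mu = 2N+\sqrt{p}\notin 2\mc O_{K_p}$ and $T_r\mu = 2N+\sqrt{r}\notin 2\mc O_{K_r}$ simultaneously, and in types (3)--(4) taking the coefficient of the quarter-integral basis element odd meets all three obstructions at once. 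The obstruction is a very weak necessary condition satisfied by abundant $\mu$, so it cannot carry the proof.

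The deeper reason no refinement of this strategy closes the gap is that $\mc O_K/2\mc O_K$ as an $\F_2[\Gal(K/\Q)]$-module is symmetric under swapping $\sqrt{p}\leftrightarrow\sqrt{r}$ in types (1)--(2), and under all permutations of $p,q,r$ in types (3)--(4), whereas the theorem is not: the examples following \Cref{thm:pid} (e.g.\ $p=14$, $q=91$, $r=26$, where the $K_r$-indecomposable $26+5\sqrt{26}$ decomposes in $\mc O_K^+$) show the subfield with the larger parameter can genuinely fail, so which two subfields are safe is decided by the archimedean ordering $p<r$ (resp.\ $p<q<r$) --- information invisible modulo $2$. Tellingly, your outline never uses any inequality among $p,q,r$. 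The paper's proof (\Cref{prp:pi12,prp:pi34}) is correspondingly quantitative: given a putative decomposition $\beta=\beta_1+\beta_2$ of an indecomposable $\beta\in\mc O_{K_p}^+$, it first shows $T_p\beta_1 = 2x_1+2y_1\sqrt{p}$ is itself indecomposable in $\mc O_{K_p}^+$, hence an upper semiconvergent; then total positivity together with $\gcd(p,q)<\sqrt{q}$ (equivalent to $p<r$) shows the remaining coordinates $(z_1,w_1)$ of $\beta_1$ furnish a strictly better Diophantine approximation of the second kind of $\sqrt{p}$ (or $\frac{-1+\sqrt{p}}2$) with no larger denominator, which by \Cref{cor:bbc} forces them to be a multiple of the next convergent; only at that point does mod-$2$ bookkeeping --- sharpened by $s_{i-1}t_i-s_it_{i-1}=(-1)^i$ and the integral-basis congruences --- produce the contradiction. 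Your proposal has the parity endgame but is missing the best-approximation mechanism and the size hypothesis that powers it, and without those the argument cannot be completed.
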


\Cref{thm:pii} represents an improvement to the result in \cite{CLSTZ2019} which established the indecomposability of indecomposable elements from quadratic subfields subject to a criterion involving continued fractions, and answers a conjecture in \cite{KTZ2020}. The proof relies on the correspondence between indecomposable elements in real quadratic fields and best one-sided Diophantine approximations. Since it is known that real quadratic fields $F$ with $\iota(F)\le R$ has density zero, \Cref{thm:pii} yields an alternative proof that real biquadratic fields $K$ with $\iota(K)\le R$ has density zero.

In \Cref{sect:if} we compute the structure of indecomposable elements for a few families of real biquadratic fields. The families of biquadratic fields $K$ chosen here are special in the sense that each quadratic subfield $F\sbe K$ has $\iota(F) = 1$. In view of \Cref{thm:pii}, these families represent likely candidates for which $\iota(K)$ is small. The following theorem says that $\iota(K)$ may still grow within such a family.

\begin{thm}\label{thm:f1} 
Let $n\ge 6$ be an integer such that
\ba
p &= (2n-1)(2n+1), & q&=(2n-1)(2n+3), & r&=(2n+1)(2n+3)
\ea
are squarefree integers, and let $K = \Q(\sqrt{p},\sqrt{q})$. A complete set of indecomposable elements in $\mc O_K^+$ modulo totally positive units is given by 
\ba
&1, \quad \frac{\varepsilon_p^{-1}+\varepsilon_r}2, \quad \mu := \rb{n+\frac 32} + \frac 12\sqrt{p}+\frac 12\sqrt{q}+\frac 12\sqrt{r},\\
&1+\varepsilon_p + t(\mu-1), \quad \frac{1+\varepsilon_p\varepsilon_r}2 + \varepsilon_p + t(\mu-1) & &(3\le t \le 2n-2),\\
&1+\varepsilon_q^{-1}+t(\mu-1), \quad \frac{1+\varepsilon_r\varepsilon_p}2 + \varepsilon_q^{-1} + t(\mu-1) & &(4\le t\le 2n-1),\\
&\frac{\varepsilon_r^{-1}+\varepsilon_p}2 + t(\mu-2) & &(2\le t \le 2n-1).
\ea
Here, $\varepsilon_p, \varepsilon_q, \varepsilon_r$ stand for the fundamental units of $\Q(\sqrt{p}), \Q(\sqrt{q})$, $\Q(\sqrt{r})$ respectively, chosen such that $\varepsilon_p,\varepsilon_q,\varepsilon_r > 1$. In particular, we have $\iota(K) = 10n-15$.
\end{thm}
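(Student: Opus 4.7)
My strategy is to realize $\mc O_K$ as a rank-$4$ $\Z$-lattice, use Theorem~\ref{thm:pii} to handle the contribution of the quadratic subfields, and then carry out a direct fundamental-domain analysis for the action of totally positive units in order to find and verify the ``new'' indecomposables.

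As setup, I would record the fundamental units of the three quadratic subfields: $\varepsilon_p = 2n+\sqrt{p}$, $\varepsilon_q = \tfrac{(2n+1)+\sqrt{q}}{2}$, and $\varepsilon_r = (2n+2)+\sqrt{r}$, each of norm $+1$ and totally positive (as a direct check on the two real embeddings confirms). Using the congruences $p,r\equiv 3$ and $q\equiv 1\pmod 4$ I would fix an integral basis of $\mc O_K$ and verify that the elements $\tfrac12(1+\varepsilon_p\varepsilon_r)$, $\tfrac12(\varepsilon_p^{-1}+\varepsilon_r)$, $\tfrac12(\varepsilon_r^{-1}+\varepsilon_p)$, and $\mu$ all lie in $\mc O_K$; total positivity of $\mu$ follows from the estimates $\sqrt{p}<2n$, $\sqrt{q}<2n+1$, $\sqrt{r}<2n+2$ applied to each of the four sign-balanced embeddings. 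Since each quadratic subfield has $\iota=1$, Theorem~\ref{thm:pii} immediately shows that $1$ remains indecomposable in $\mc O_K$; the remaining work is to identify and classify the indecomposables genuinely involving the biquadratic structure.

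For indecomposability of each listed element $\alpha$, I would argue by contradiction: if $\alpha = \beta+\gamma$ with $\beta,\gamma\in\mc O_K^+\setminus\{0\}$, then expanding $\beta,\gamma$ in the chosen integral basis and imposing positivity in all four embeddings yields inequalities on the coefficients that admit no integral solution. The one-parameter families $\alpha_t = A + t(\mu-1)$ with $A \in \{1+\varepsilon_p,\, \tfrac12(1+\varepsilon_p\varepsilon_r)+\varepsilon_p,\, 1+\varepsilon_q^{-1},\, \tfrac12(1+\varepsilon_r\varepsilon_p)+\varepsilon_q^{-1}\}$ share a common ``step'' $\mu-1$, whose four embeddings are carefully arranged so that peeling off $\mu-1$ from $\alpha_t$ would force an embedding of the residue to be nonpositive; this constrains the admissible range of $t$ to the stated interval. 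The last family $\tfrac12(\varepsilon_r^{-1}+\varepsilon_p) + t(\mu-2)$ is treated analogously with step $\mu-2$.

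The main obstacle is completeness. For this I would choose a fundamental domain $\mc F$ for the action of the finite-index subgroup $\langle \varepsilon_p,\varepsilon_q,\varepsilon_r\rangle$ of the totally positive unit group on the totally positive cone of $K\otimes\R$, and show that every totally positive $\alpha\in\mc O_K$ lying in $\mc F$ either admits a decomposition or coincides with one of the listed representatives. After multiplying by a suitable totally positive unit, I expect to write $\alpha = a + b\varepsilon_p + c\varepsilon_q^{-1} + d(\mu-1)$ (or with $\mu-2$) with coefficients in explicit ranges; when $d$ is large enough one can exhibit $\alpha = (\alpha - \mu) + \mu$ as a totally positive splitting, which yields the cutoff $d \le 2n-1$, and when $d$ is too small the analogous argument in the opposite direction gives the lower cutoff. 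A finite case check on the remaining grid of values $(a,b,c,d)$ should then produce exactly the $10n-15$ listed indecomposables, completing the proof that $\iota(K)=10n-15$.
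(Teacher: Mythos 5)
Your overall strategy — fundamental units, a fundamental domain for the totally positive unit action, enumeration of integral points, and positivity-based contradictions for indecomposability — is the same skeleton as the paper's proof, but as written it has two genuine gaps, both at the crux of the theorem. First, you work modulo the subgroup $\langle \varepsilon_p,\varepsilon_q,\varepsilon_r\rangle$, calling it a ``finite-index subgroup'' of $\mc O_K^{\times,+}$, without ever determining whether it is the \emph{full} group of totally positive units. This is not a technicality: a fundamental domain for a proper subgroup is strictly larger, so your enumeration would list several representatives of a single class under $\mc O_K^{\times,+}$, and the final count $\iota(K)=10n-15$ would not follow. The paper settles this exactly via Kubota's criterion: computing the invariants $\delta_p = 4n+2$, $\delta_q = 2n+3$, $\delta_r = 4n+6$, it shows that a system of fundamental units of $\mc O_K^\times$ is $\varepsilon_p,\varepsilon_q,\sqrt{\varepsilon_p\varepsilon_r}$, and then that $\sqrt{\varepsilon_p\varepsilon_r}$ is \emph{not} totally positive (since $\delta_p\delta_r = 4(2n+1)(2n+3)$ is not a square), whence $\mc O_K^{\times,+} = \langle\varepsilon_p,\varepsilon_q,\varepsilon_r\rangle$ and $[\mc O_K^\times:\mc O_K^{\times,+}]=4$. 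Your proposal needs this step or an equivalent one before any counting can be trusted.

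Second, your completeness argument is not a workable plan. You say you would ``choose a fundamental domain $\mc F$'' and, after multiplying by a unit, write $\alpha = a + b\varepsilon_p + c\varepsilon_q^{-1} + d(\mu-1)$ with coefficients in explicit ranges — but $\mu-1$ is not a unit, a single such cone does not cover the totally positive cone of $K\otimes\R$, and nothing in your sketch certifies that every totally positive integer is reached. The paper instead invokes the signed fundamental domain theorem of Diaz y Diaz and Friedman: the fundamental domain is covered by the six simplicial cones $C_{xyz} = \R_{\ge 0}\pb{1,\varepsilon_x,\varepsilon_x\varepsilon_y,\varepsilon_x\varepsilon_y\varepsilon_z}$ over permutations $\{x,y,z\}$ of $\{p,q,r\}$ (with non-overlap verified by a determinant sign computation), and then enumerates the integral points of each translated parallelepiped $P_{xyz} = [0,1]\pb{\varepsilon_x^{-1},1,\varepsilon_y,\varepsilon_y\varepsilon_z}$ using the congruence description of $\mc O_K$ for a type (2) biquadratic field ($x,y,z,w\in\frac12\Z$, $x+z,y+w\in\Z$). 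This enumeration is what produces exactly the candidate lists — e.g.\ the family $\frac{\varepsilon_r^{-1}+\varepsilon_p}2 + t(\mu-2)$ arises from the parallelepiped $P_{rpq}$ and would not appear in your single ansatz — after which each candidate is either explicitly decomposed or proved indecomposable by the inequalities you describe. Without a covering result of this kind, your ``finite case check on the remaining grid'' has no justification that the grid is exhaustive, which is precisely the hard part of the theorem.
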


\begin{rmk}
The infinitude of the family in \Cref{thm:f1} is equivalent to that the polynomial $f(n) = (2n-1)(2n+1)(2n+3)$ takes infinitely many squarefree values. Since $f(n)$ is a product of linear terms, such a statement can be conveniently proved using standard sieve arguments, see for example \cite[Section~2]{Erdos1953}.
\end{rmk}

In analogous \Cref{thm:f2,thm:f3}, we compute the set of indecomposable elements in two other one-parameter families of biquadratic fields, and show that $\iota(K)$ also grows within such families. In \Cref{sect:cr} we compute $\iota(K)$ for a number of real biquadratic fields $K$ with small discriminants. The computations suggest that the one-parameter families in \Cref{thm:f2,thm:f3} are among the fields for which $\iota(K)$ is the smallest. In view of this observation, we formulate the following conjecture.

\begin{cnj}\label{cnj:cnj} 
For any $R\in\N$, there are only finitely many real biquadratic fields $K$ for which we have $\iota(K) \le R$.
\end{cnj}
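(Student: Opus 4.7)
The plan is to combine Theorem~\ref{thm:pii} with a structural lower bound on the number of ``genuinely biquadratic'' indecomposable elements of $K$. First, I would reduce to the quadratic subfields: suppose $K$ is a real biquadratic field with $\iota(K) \le R$, and, after relabelling, let $K_1, K_2$ be the two subfields supplied by Theorem~\ref{thm:pii}, so that indecomposables of $\mc O_{K_j}^+$ remain indecomposable in $\mc O_K^+$. If $\alpha, \alpha' \in \mc O_{K_j}^+$ satisfy $\alpha' = \alpha u$ for some $u \in \mc O_K^{\times,+}$, then $u = \alpha'/\alpha \in K_j \cap \mc O_K^{\times,+} = \mc O_{K_j}^{\times,+}$, so distinct indecomposable classes in $\mc O_{K_j}^+/\mc O_{K_j}^{\times,+}$ remain distinct in $\mc O_K^+/\mc O_K^{\times,+}$. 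Hence $\iota(K_1), \iota(K_2) \le R$. This reduction alone is insufficient, however, because there are infinitely many real quadratic fields with $\iota(F) = 1$ (e.g.\ $\Q(\sqrt{n^2+1})$), and by varying parameters one can easily exhibit infinitely many biquadratic fields whose two selected quadratic subfields simultaneously have $\iota = 1$. Thus the conjecture reduces to showing that in almost all such biquadratic fields there exist many further indecomposable elements not coming from any quadratic subfield, and that the number of their classes modulo $\mc O_K^{\times,+}$ tends to infinity with $\Delta_K$.

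Motivated by the explicit computations in Theorems~\ref{thm:f1}--\ref{thm:f3}, where each quadratic subfield has $\iota = 1$ yet $\iota(K)\to\infty$ along the one-parameter families, I would construct a family of candidate ``mixed'' indecomposable elements of the form $\alpha + u\beta$ with $\alpha, \beta$ ranging over (products of) subfield indecomposables and $u$ over totally positive units of $K$, and verify indecomposability via the criterion that an element of $\mc O_K^+$ is indecomposable precisely when it is a vertex of the convex hull $\conv(\mc O_K^+) \subset K\otimes_\Q\R$. By controlling this convex hull through the logarithmic embeddings of the fundamental units of the three subfields, one hopes to show that the number of distinct vertex classes modulo $\mc O_K^{\times,+}$ inside a suitable fundamental domain grows with $\Delta_K$.

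The main obstacle is making this construction uniform across all biquadratic fields. When the fundamental units of $K_1$ and $K_2$ are both small relative to their discriminants, the units inside $\mc O_K^{\times,+}$ that do not lie in any quadratic subfield are forced to be large, producing many new vertices of $\conv(\mc O_K^+)$; conversely, when one of the subfield units is large relative to the discriminant of $K$, one must exploit this imbalance directly. Carrying out such a dichotomy uniformly, and tracking the embedding of the rank-$1$ groups $\mc O_{K_j}^{\times,+}$ inside the rank-$3$ group $\mc O_K^{\times,+}$, is the principal difficulty; a resolution likely requires a Voronoi-type analysis of the cone of totally positive elements of $\mc O_K$, generalising the continued-fraction description of indecomposables that is available in the quadratic case.
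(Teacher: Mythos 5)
You are being asked about \Cref{cnj:cnj}, which the paper does not prove: it is stated as an open conjecture, and the paper explicitly remarks at the end of \Cref{sect:cr} that it ``appears to be out of reach with current techniques.'' Judged as a proof, your proposal has the same status: it is a research programme, not an argument. The only complete step is the reduction via \Cref{thm:pii}, showing $\iota(K_1),\iota(K_2)\le R$ for two of the quadratic subfields (that step is sound, including the observation that $K_j\cap\mc O_K^{\times,+}=\mc O_{K_j}^{\times,+}$, so subfield classes stay distinct), and you correctly note that it cannot suffice, since $\iota(F)=1$ holds for infinitely many real quadratic fields. But everything after that is conditional (``one hopes to show,'' ``a resolution likely requires''): the crucial claim --- that the number of classes of mixed indecomposables modulo $\mc O_K^{\times,+}$ tends to infinity with $\Delta_K$, uniformly over all real biquadratic fields --- is exactly the content of the conjecture, and you supply no construction, no lower bound, and no mechanism for carrying out the dichotomy you sketch. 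The paper's strongest results in this direction (\Cref{thm:main2,thm:fi}) give only density zero, not finiteness, and your reduction essentially recovers that weaker statement.

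There is also a concrete error in the tool you propose to use: it is \emph{not} true that $\alpha\in\mc O_K^+$ is indecomposable precisely when it is a vertex of $\conv(\mc O_K^+)$. Vertices are indecomposable (if $\alpha=\beta+\gamma$ with $\beta,\gamma\in\mc O_K^+$, then $\alpha$ is the midpoint of $2\beta$ and $2\gamma$), but the converse fails already in real quadratic fields: the indecomposables are the semiconvergents $\alpha_{i,l}$ with $i$ odd, and for $0<l<u_{i+2}$ these lie in the relative interior of the segment from $\alpha_i$ to $\alpha_{i+2}$, hence are not vertices. The paper's own computations exhibit the same phenomenon in biquadratic fields: in \Cref{thm:f1} the indecomposables include arithmetic progressions such as $1+\varepsilon_p+t(\mu-1)$ for $3\le t\le 2n-2$, whose interior members cannot be vertices of the hull. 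So a vertex count would systematically undercount $\iota(K)$, and, conversely, producing many vertices inside a fundamental domain would still leave open that they are pairwise inequivalent modulo $\mc O_K^{\times,+}$, a counting issue your sketch does not address. In the paper, indecomposability is verified by ad hoc total-positivity inequalities (\Cref{prp:prqi,prp:qpri}) family by family; the absence of any uniform criterion of the kind you invoke is precisely why the conjecture remains open.
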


The paper is organised as follows. In \Cref{sect:pre} we recall basic results on multiquadratic fields, as well as the theory of indecomposable elements for real quadratic fields. In \Cref{sect:den} we prove \Cref{thm:main1,thm:main2,thm:fuf,thm:fi}. In \Cref{sect:pi} we prove \Cref{thm:pii} (as \Cref{thm:pid}). Finally, in \Cref{sect:if} we compute the structure of indecomposable elements for a few families of biquadratic fields, proving \Cref{thm:f1} and its analogues.

\subsection*{Notations}
Here we recollect the notations used in the paper which are possibly non-standard. Throughout, $F$ stands for a totally real number field.
\begin{center}
\begin{tabular}{rl}
$\Delta_F$ & the discriminant of $F$\\
$\mc O_F$ (resp. $\mc O_F^+$) & the ring of integers (resp. the set of totally positive integers) of $F$\\
$\mc O_F^\times$ (resp. $\mc O_F^{\times,+}$) & the group of units (resp. totally positive units) in $\mc O_F$\\
$R(F)$ & the minimal rank of universal lattices over $F$\\
$R_{\operatorname{cls}}(F)$ & the minimal rank of classical universal lattices over $F$\\
$\iota(F)$ & the number of indecomposable elements in $\mc O_F^+$ modulo totally positive units\\
$\mc K(n,X)$ & the set of real multiquadratic fields of degree $2^n$ with discriminant $\le X$\\
$\mc K_{\operatorname{univ}}(n,X,R,m)$ & the set of fields $K\in \mc K(n,X)$ admitting $m\mc O_K$-universal classical lattice of rank $R$\\
$\mc K_{\operatorname{indec}}(n,X,R)$ & the set of fields $K\in\mc K(n,X)$ with $\iota(K) \le R$
\end{tabular}
\end{center}

Throughout the article, we use the following asymptotic notations. For real functions $f(x), g(x)$, we write $f\ll g$ or $g\gg f$ if there are $c>0$ and $x_0$ such that $\vb{f(x)} \le c g(x)$ for all $x\ge x_0$; we may write $f\ll_y g$ to emphasise that the constant $c$ depends on $y$. We write $f\asymp g$ if $f\ll g$ and $g\gg f$.

\section*{Acknowledgement}

The author would like to thank Vítězslav Kala and Błażej Żmija for the helpful discussions, and the referee for the useful suggestions that improved the presentation and simplified some proofs.

\section{Preliminaries}\label{sect:pre}

\subsection{Multiquadratic fields}\label{sect:mf} 
Let $F$ be a totally real number field. We denote by $\mc O_F$ the ring of integers of $F$. An element $\alpha \in F$ is called \emph{totally positive} if $\sigma(\alpha) > 0$ for every embedding $\sigma:F\hra\R$. This is denoted by $\alpha\succ 0$. We denote by $\mc O_F^+$ the set of totally positive elements in $\mc O_F$. An element $\alpha\in \mc O_F^+$ is called \emph{indecomposable} if it cannot be written as a sum $\alpha = \beta+\gamma$ with $\beta,\gamma \in \mc O_F^+$. It is well-known that there are finitely many indecomposable elements up to multiplication by totally positive units. For an easy proof of this fact, we note that there exists some constant $c_F$ such that $\NN_{F/\Q}(\alpha) \le c_F$ if $\alpha\in\mc O_F^+$ is indecomposable (see \cite{DS1982}), and that up to multiplication by totally positive units there are only finitely many $\alpha\in\mc O_F^+$ with $\NN_{F/\Q}(\alpha) \le c_F$ (see \cite[I.7.2]{Neukirch1999}). Throughout the article, we shall denote by $\iota(F)$ the number of indecomposable elements up to multiplication by totally positive units. When no confusion may arise, we simply call $\iota(F)$ the number of indecomposable elements of $\mc O_F$.

We let $K = \Q(\sqrt{A_{2^0}}, \sqrt{A_{2^1}}, \ldots, \sqrt{A_{2^{n-1}}})$ be a multiquadratic field of degree $[K:\Q] = 2^n$, where $n\ge 2$, and $A_{2^0}$, \ldots, $A_{2^{n-1}}$ are squarefree integers. By picking different generators of $K$, one can impose some congruence conditions on $A_{2^0},\ldots, A_{2^{n-1}}$, which is helpful in narrowing down the number of cases to consider. It was shown in \cite{Chatelain1973,Schmal1989} that every multiquadratic field $K$ can be written in a way so that we have
\begin{equation}\label{eq:mfa} 
\begin{aligned}
A_{2^k} &\equiv 1 \pmod{4}, \quad 0\le k \le n-3, \text{ and}\\ (A_{2^{n-2}}, A_{2^{n-1}}) &\equiv (1,1),\; (1,2), \; (1,3), \text{ or } (2,3) \pmod{4}.
\end{aligned}
\end{equation}
We will always assume that the integers $A_{2^k}$ are of the form above. 

The field $K$ contains $2^n-1$ quadratic subfields. To describe them, it is helpful to establish some notations. For $0\le j \le 2^n-1$, we define integers $A_j \in \Z$ as follows. We set $A_0 := 1$. Then, if $j = 2^k + j'$ with $0\le j' \le 2^k-1$, we define $A_j := \frac{A_{2^k} A_{j'}}{d_{k,j}^2}$, where $d_{k,j}$ is a greatest common divisor of $A_{2^k}$ and $A_{j'}$. Replacing $d_{k,j}$ by $-d_{k,j}$ when necessary, we may assume $d_{k,j} \equiv 1 \pmod{4}$. We note that the integers $A_j$ are squarefree by construction. The $2^n-1$ quadratic subfields of $K$ are then given by $\Q(\sqrt{A_j})$, for $1\le j \le 2^n-1$. 

The ring of integers $\mc O_K$ of $K$ is a free $\Z$-module, hence admits a $\Z$-basis. Let $\alpha_0 = 1$, and $\alpha_{2^k} = \sqrt{A_{2^k}}$, for $0\le k \le n-1$. If $j = 2^k + j'$, where $1\le j' \le 2^k-1$, we define $\alpha_j = \frac{\alpha_{2^k}\alpha_{j'}}{d_{k,j}}$, where $d_{k,j}$ is defined as above. A $\Z$-basis of $\mc O_K$ is then given in the following theorem of Chatelain \cite{Chatelain1973}.

\begin{thm}[\cite{Chatelain1973}]\label{thm:Chatelain}
Let $K = \Q(\sqrt{A_{2^0}}, \sqrt{A_{2^1}}, \ldots, \sqrt{A_{2^{n-1}}})$ be a multiquadratic field of degree $2^n$, where $n\ge 2$, and $A_{2^0}$, \ldots, $A_{2^{n-1}}$ are squarefree integers satisfying \eqref{eq:mfa}. Assuming the notations above, a $\Z$-basis of $\mc O_K$ is given as follows:
\ben
\item Suppose $(A_{2^{n-2}}, A_{2^{n-1}}) \equiv (1,1) \pmod{4}$. Define $E := 2^{-n} \sum_{j=0}^{2^n-1} \alpha_j$. Then a $\Z$-basis of $\mc O_K$ is given by the set $\cbm{\sigma(E)}{\sigma\in\Gal(K/\Q)}$.
\item Suppose $(A_{2^{n-2}}, A_{2^{n-1}}) \equiv (1,2)$ or $(1,3) \pmod{4}$. Define
\begin{align*}
E_1 &:= 2^{-n+1} \sum_{j=0}^{2^{n-1}-1} \alpha_j, & E_2 &:= 2^{-n+1} \sum_{j=2^{n-1}}^{2^n-1} \alpha_j.
\end{align*}
Then a $\Z$-basis of $\mc O_K$ is given by the set $\cbm{\sigma(E_i)}{\sigma\in \Gal(K/\Q(\sqrt{A_{2^{n-1}}})), \; i=1,2}$.
\item Suppose $(A_{2^{n-2}}, A_{2^{n-1}}) \equiv (2,3) \pmod{4}$. Define
\begin{align*}
E_1 &:= 2^{-n+2} \sum_{j=0}^{2^{n-2}-1} \alpha_j, & E_2 &:= 2^{-n+2} \sum_{j=2^{n-2}}^{2^{n-1}-1} \alpha_j,\\
E_3 &:= 2^{-n+2} \sum_{j=2^{n-1}}^{3\cdot 2^{n-2}-1} \alpha_j, & E_4 &= 2^{-n+1} \rb{\sum_{j=2^{n-2}}^{2^{n-1}-1} \alpha_j + \sum_{j=3\cdot 2^{n-2}}^{2^n-1} \alpha_j}.
\end{align*}
Then a $\Z$-basis of $\mc O_K$ is given by the set $\cbm{\sigma(E_i)}{\sigma\in \Gal(K/\Q(\sqrt{A_{2^{n-2}}},\sqrt{A_{2^{n-1}}})),\; i=1,2,3,4}$.
\ee
\end{thm}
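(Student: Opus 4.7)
The plan is to establish two things: (i) each proposed generator and its Galois conjugates lie in $\mc O_K$, and (ii) the $\Z$-module $M$ they span has discriminant equal to $\Delta_K$, so that $M=\mc O_K$.

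For (i), the generators are engineered to exploit the fact that $\frac{1+\sqrt{A}}{2}$ is integral precisely when $A\equiv 1\pmod 4$. In case (1) every $A_{2^k}\equiv 1\pmod 4$, so the product $\tilde E := \prod_{k=0}^{n-1}\frac{1+\sqrt{A_{2^k}}}{2}$ lies in $\mc O_K$ as a product of quadratic integers. Expanding and using that each $\alpha_j=\sqrt{A_j}$ differs from $\sqrt{\prod_{k\in S(j)}A_{2^k}}$ (where $S(j)\sbe\cb{0,\ldots,n-1}$ encodes the binary expansion of $j$) by the product $D(j)$ of the rescaling divisors $d_{k,j}$, one obtains $\tilde E = 2^{-n}\sum_j D(j)\alpha_j$. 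Since each $d_{k,j}$ has been normalised so that $d_{k,j}\equiv 1\pmod 4$, the divisor $D(j)\equiv 1\pmod 4$ as well, whence $\tilde E - E = 2^{-n}\sum_j (D(j)-1)\alpha_j$ is a $\Z$-linear combination of the integral elements $\alpha_j = \sqrt{A_j}$. This forces $E\in\mc O_K$, and the same argument applied after any $\sigma\in\Gal(K/\Q)$ handles the whole orbit. Cases (2) and (3) are treated analogously, splitting off the factors for indices $k$ where $A_{2^k}\not\equiv 1\pmod 4$; in case (3) the hybrid generator $E_4$ (with denominator $2^{n-1}$ rather than $2^{n-2}$) captures an additional integral element that exists only in the $(2,3)\bmod 4$ congruence class, arising because $A_{2^{n-2}}A_{2^{n-1}}$ has a well-controlled $2$-adic valuation.

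For (ii), compare $\disc(M)$ with $\Delta_K$. The conductor--discriminant formula applied to $\Gal(K/\Q)\cong(\Z/2\Z)^n$ (whose characters are indexed by the $2^n-1$ quadratic subfields) yields
\[
\Delta_K = \prod_{j=1}^{2^n-1}\Delta_{\Q(\sqrt{A_j})},
\]
where $\Delta_{\Q(\sqrt{A_j})}$ equals $A_j$ or $4A_j$ according as $A_j\equiv 1\pmod 4$ or not. On the other hand, $\disc(M)$ is computed directly from the Gram matrix $\bigl(\Tr_{K/\Q}(\sigma_i(E)\sigma_j(E))\bigr)_{i,j}$ of traces of the proposed basis; the powers of $2$ in the denominators of $E$ (or the $E_i$) combine with the determinant of the naive trace matrix of the $\alpha_j$'s to produce exactly $\Delta_K$, forcing $M = \mc O_K$ since $M\sbe\mc O_K$ by (i).

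The main obstacle is the $2$-adic bookkeeping in case (3): four generators of two different denominator types must be combined with the $d_{k,j}$ rescalings, and one must verify that the resulting $2$-adic valuation of $\disc(M)$ matches the prescribed contribution $\prod_{A_j\not\equiv 1(4)} 4$ from the conductor--discriminant formula on the nose. The congruence assumption $(A_{2^{n-2}},A_{2^{n-1}})\equiv(2,3)\pmod 4$ together with the squarefreeness of the $A_j$ is exactly what makes this identity hold, so the proof plan reduces to a careful case analysis of how the integers $A_j$ (for $1\le j\le 2^n-1$) distribute over the four residues modulo $4$.
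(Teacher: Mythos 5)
The paper does not prove this theorem---it is imported verbatim from Chatelain \cite{Chatelain1973} (with the discriminant consequence quoted from Schmal)---so your attempt must be judged on its own merits. Your skeleton (show the proposed generators are integral, then show the $\Z$-module $M$ they span has discriminant $\Delta_K$, whence $M=\mc O_K$) is a legitimate and standard route, and part (ii) is sound in outline: the matrix $(\sigma_i(\alpha_j))$ factors as a $2^n\times 2^n$ character table of $(\Z/2\Z)^n$ times $\diag(\alpha_j)$, and the resulting power of $2$ does cancel against the denominators of the $E_i$ to reproduce exactly the value $2^{2^{n-1}r}\prod_j A_j$ of Schmal's Satz~2.1, with the conductor--discriminant formula giving the matching lower bound.

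The genuine gap is in step (i), and it is essential because (i) is what guarantees $M\sbe\mc O_K$. You argue that $\tilde E-E=2^{-n}\sum_j(D(j)-1)\alpha_j$ is a $\Z$-linear combination of the $\alpha_j$ because each $d_{k,j}\equiv 1\pmod 4$ forces $D(j)\equiv 1\pmod 4$. But $D(j)\equiv 1\pmod 4$ only gives $4\mid D(j)-1$, while termwise integrality requires $2^n\mid D(j)-1$; for $n\ge 3$ this fails as soon as some $D(j)\not\equiv 1\pmod{2^n}$. Concretely, take $n=3$ with $A_{2^0}=65$, $A_{2^1}=145$, $A_{2^2}=17$ (all $\equiv 1\pmod 4$, squarefree, satisfying \eqref{eq:mfa}): then $d_{1,3}=\gcd(145,65)=5$, so $A_3=377$, $D(3)=D(7)=5$, and your difference is $\tfrac12(\alpha_3+\alpha_7)$, whose individual terms $\tfrac12\sqrt{377}$ and $\tfrac12\sqrt{377\cdot 17}$ are \emph{not} integral. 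The sum happens to be integral, but only because of the grouping $\tfrac12(\alpha_3+\alpha_7)=\alpha_3\cdot\tfrac{1+\sqrt{17}}2$---a structural cancellation your termwise congruence argument does not see. So ``this forces $E\in\mc O_K$'' is unjustified for $n\ge 3$; a correct proof must peel off one quadratic generator at a time (induction on $n$), tracking how the $d_{k,j}$ of the new layer interact with the previous ones, which is precisely the delicate $2$-adic bookkeeping in Chatelain's actual argument. The same unproved integrality is then silently assumed in your cases (2) and (3), where you explicitly defer the $2$-adic analysis (e.g.\ for $E_4$) rather than carry it out, so the gap propagates to all three cases.
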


We shall not need the explicit shape of the integral basis of $\mc O_K$ beyond the biquadratic case in this article. But \Cref{thm:Chatelain} gives the following convenient corollary on the discriminant of multiquadratic fields (see \cite{Chatelain1973,Schmal1989}).

\begin{cor}[{\cite[Satz~2.1]{Schmal1989}}]
Assume the settings above. Then the discriminant of the multiquadratic field $K$ is given by
\ba
\Delta_K = \rb{2^r \rad\rb{A_{2^0}A_{2^1}\cdots A_{2^{n-1}}}}^{2^{n-1}} = 2^{2^{n-1}r} \prod_{j=1}^{2^n-1} A_j,
\ea
where $\rad(x)$ denotes the radical of $x\in\Z$, and
\begin{equation}\label{eq:drd} 
r = \begin{cases} 0 & \text{ if } (A_{2^{n-2}},A_{2^{n-1}}) \equiv (1,1)\pmod{4},\\ 2 & \text{ if } (A_{2^{n-2}}, A_{2^{n-1}}) \equiv (1,2) \text{ or } (1,3) \pmod{4},\\ 3 & \text{ if } (A_{2^{n-2}}, A_{2^{n-1}}) \equiv (2,3) \pmod{4}.\end{cases}
\end{equation}
\end{cor}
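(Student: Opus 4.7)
The plan is to deduce the formula from the conductor-discriminant formula applied to the abelian extension $K/\Q$. Since $\Gal(K/\Q)\cong(\Z/2\Z)^n$, every character is quadratic, and each nontrivial character is the quadratic character cut out by a unique quadratic subfield $\Q(\sqrt{A_j})\subseteq K$ with $j\in\{1,\ldots,2^n-1\}$. The conductor of such a character equals the absolute discriminant of $\Q(\sqrt{A_j})$, namely $A_j$ if $A_j\equiv 1\pmod 4$ and $4A_j$ otherwise. Hence
\[
\Delta_K \;=\; \prod_{\chi}\mathfrak f(\chi) \;=\; \prod_{j=1}^{2^n-1}\operatorname{disc}(\Q(\sqrt{A_j})),
\]
so the two equalities in the statement reduce to (i) the identity $\prod_{j=1}^{2^n-1}A_j = \operatorname{rad}(A_{2^0}\cdots A_{2^{n-1}})^{2^{n-1}}$, and (ii) a count of how many of the $A_j$ are not $\equiv 1\pmod 4$.

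For (i), I would compare $p$-adic valuations prime by prime. Setting $T_p := \{k : p\mid A_{2^k}\}$, an easy induction on the recursion $A_j = A_{2^k}A_{j'}/d_{k,j}^2$ (together with squarefreeness of each $A_j$) shows that $v_p(A_j)$ equals the parity of $|T_p\cap\operatorname{supp}_2(j)|$, where $\operatorname{supp}_2(j)$ denotes the set of bit positions of $j$. The standard toggling argument then gives that exactly $2^{n-1}$ of the $j\in\{0,\ldots,2^n-1\}$ satisfy this parity when $T_p\ne\emptyset$, and none when $T_p=\emptyset$; since $j=0$ contributes even parity, all $2^{n-1}$ fall in $\{1,\ldots,2^n-1\}$. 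Thus $v_p\bigl(\prod_{j=1}^{2^n-1}A_j\bigr)=2^{n-1}\mathbf 1[T_p\ne\emptyset]$, matching the right-hand side.

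For (ii), I would do a four-case analysis on $(A_{2^{n-2}},A_{2^{n-1}})\pmod 4$. First note that under \eqref{eq:mfa} at most one of the $A_{2^k}$ is even, so every gcd $d_{k,j}$ is odd, justifying the normalization $d_{k,j}\equiv 1\pmod 4$ and ensuring $d_{k,j}^2\equiv 1\pmod 8$ never affects residues $\pmod 4$. Consequently the class of $A_j\pmod 4$ depends only on bits $n-2$ and $n-1$ of $j$. Enumerating, in case $(1,1)$ all $A_j\equiv 1\pmod 4$ (giving $r=0$); in cases $(1,2)$ and $(1,3)$ the $2^{n-1}$ indices with bit $n-1$ set have $A_j\equiv 2$ or $3\pmod 4$ (giving $4^{2^{n-1}}=2^{2^{n-1}\cdot 2}$, i.e.\ $r=2$); in case $(2,3)$ the $3\cdot 2^{n-2}$ indices with at least one of bits $n-2,n-1$ set have $A_j\equiv 2$ or $3\pmod 4$ (giving $4^{3\cdot 2^{n-2}}=2^{2^{n-1}\cdot 3}$, i.e.\ $r=3$), matching \eqref{eq:drd} in each case.

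The main obstacle is the bookkeeping in the case analysis of (ii): one must verify carefully that throughout the recursive construction the odd gcds $d_{k,j}^2$ never introduce or cancel factors of $2$ and never flip residues modulo $4$. Once this is in place, both the parity count in (i) and the enumeration in (ii) are routine, and combining them with the conductor-discriminant identity yields the stated formula.
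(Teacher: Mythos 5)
Your proof is correct, but it takes a genuinely different route from the one the paper relies on. The paper presents this statement as a corollary of Chatelain's explicit integral basis (\Cref{thm:Chatelain}), citing Schmal's Satz~2.1: there the discriminant is computed directly from the $\Z$-basis in each of the three cases, essentially by evaluating $\det(\sigma_i(e_j))^2$ for the bases built from the elements $E$, $E_i$. You instead invoke the conductor-discriminant formula for the $(\Z/2\Z)^n$-extension $K/\Q$, reducing everything to $\Delta_K = \prod_{j=1}^{2^n-1}\operatorname{disc}(\Q(\sqrt{A_j}))$, and then to two combinatorial facts: the prime-by-prime parity count showing $\prod_{j=1}^{2^n-1} A_j = \rad(A_{2^0}\cdots A_{2^{n-1}})^{2^{n-1}}$, and the count of indices $j$ with $A_j \not\equiv 1 \pmod 4$ (namely $0$, $2^{n-1}$, and $3\cdot 2^{n-2}$ in the respective cases, which matches $2^{2^{n-1}r}$ with $r = 0,2,3$). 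Both steps check out; in particular your observation that at most one generator $A_{2^k}$ is even under \eqref{eq:mfa} (so all $d_{k,j}$ are odd and $d_{k,j}^2 \equiv 1 \pmod 8$ cannot disturb residues mod $4$) is the right key to the case analysis, since in the recursion $A_j = A_{2^k}A_{j'}/d_{k,j}^2$ the bit positions of $2^k$ and $j'$ are disjoint, so the two gcd arguments are never both even. What each approach buys: yours avoids the integral-basis computation entirely, needing only the normalization \eqref{eq:mfa} and squarefreeness of the $A_j$, and your parity argument for the valuations $v_p(A_j)$ is in fact the same combinatorial structure the paper redevelops later in \Cref{sect:fuf} (the partition by $\mc L(p)$ and the coprime parameters $\gamma_i$); the paper's route, by contrast, gets the corollary essentially for free once \Cref{thm:Chatelain} is in hand, and the explicit basis is needed elsewhere in the paper anyway (e.g.\ \eqref{eq:fbib}), so no machinery beyond it is introduced.
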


We shall also need an asymptotic formula on the number of multiquadratic fields of degree $2^n$ with bounded discriminant. For this we recall a result of Wright \cite{Wright1989}, applied to multiquadratic fields.
\begin{thm}[{\cite[Theorem~I.2]{Wright1989}}]\label{thm:Wright}
Let $X\ge 1$, and let $\mc K(n,X)$ denote the set of real multiquadratic fields of degree $2^n$ with discriminant $\Delta_K \le X$. Then we have
\[
\# \mc K(n,X) \asymp X^{2^{1-n}} (\log X)^{2^n-2}.
\]
\end{thm}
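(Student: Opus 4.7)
The stated asymptotic is essentially Wright's theorem applied to $G = (\Z/2\Z)^n$, restricted to totally real extensions; the plan is to derive it directly, by parameterizing real multiquadratic fields and applying a Selberg--Delange-type mean-value estimate. First, I would use the bijection between real multiquadratic fields $K$ of degree $2^n$ and $n$-dimensional $\F_2$-subspaces $V \subset \Q^{>0}/(\Q^\times)^2 \cong \bigoplus_p \F_2$, and apply the discriminant formula just recalled. Because the indicator maps ``$p$ appears in the squarefree part of $v$'' are $\F_2$-linear functionals on $V$, each prime $p$ ramified in $K$ divides exactly $2^{n-1}$ of the $2^n - 1$ nonzero squarefree representatives $v\in V$, so $\Delta_K \asymp_n P(V)^{2^{n-1}}$ where $P(V) := \prod_{p \in S_V} p$ and $S_V$ is the set of ramified primes. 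The constraint $\Delta_K \le X$ thus reads $P(V) \le Y$ with $Y := c_n X^{1/2^{n-1}}$.

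Next, stratifying by the set of ramified primes gives
\[
\#\mc K(n,X) \;\asymp_n\; \sum_{\substack{m \le Y \\ m \text{ squarefree}}} s_n(\omega(m)),
\]
where $s_n(k)$ denotes the number of $n$-dimensional subspaces of $\F_2^k$ whose projection to each coordinate is surjective (this last condition being exactly what forces every prime in the putative $S_V$ to actually ramify in $K$). A short inclusion--exclusion on the coordinates at which surjectivity fails, combined with the classical estimate $\binom{k}{n}_2 \sim c_n 2^{nk}$, should yield $s_n(k) \asymp_n (2^n - 1)^k$ for $k \ge n$; the heuristic is that a random $(k-n)$-dimensional subspace of $\F_2^k$ avoids each of the $k$ standard basis covectors with probability $1 - 2^{-n}$.

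Finally, I would observe that the Dirichlet series
\[
\sum_{m \text{ sqfree}} \frac{(2^n-1)^{\omega(m)}}{m^s} \;=\; \prod_p \left(1 + \frac{2^n-1}{p^s}\right)
\]
differs from $\zeta(s)^{2^n - 1}$ by a factor that is holomorphic and nonvanishing near $s = 1$, so a standard Tauberian / Selberg--Delange argument will give $\sum_{m \le Y,\; \text{sqfree}} (2^n-1)^{\omega(m)} \asymp Y (\log Y)^{2^n - 2}$. Substituting $Y \asymp X^{1/2^{n-1}}$ then produces the claimed $\asymp X^{2^{1-n}} (\log X)^{2^n - 2}$. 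The main technical point I expect is the combinatorial estimate $s_n(k) \asymp_n (2^n - 1)^k$, which is what drives the $\log X$ exponent; once this is in place the remainder reduces to classical analytic machinery for squarefree integers with a prescribed number of prime divisors.
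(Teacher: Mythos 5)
Your proposal is correct, and in substance it is the same counting reduction as the paper's own alternative proof (given in Section~\ref{sect:fuf}, where the statement is reproved alongside the main argument; the theorem itself is otherwise justified by citing Wright). Both proofs rest on the same two facts: the discriminant is, up to a bounded $2$-adic factor, the $2^{n-1}$-st power of the product of ramified primes (because each ramified prime divides exactly $2^{n-1}$ of the $2^n-1$ squarefree invariants $A_j$ --- your $\mathbb{F}_2$-linear-functional argument is exactly the paper's observation that each $\gamma_j$ appears in $2^{n-1}$ of the $A_i$'s), and the resulting count is $\sum_{m\le Y \text{ sqfree}} (2^n-1)^{\omega(m)} \asymp Y(\log Y)^{2^n-2}$. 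The packaging differs in two places. First, the paper fixes Chatelain-normalized generators and parametrizes a field by the pairwise coprime squarefree integers $\gamma_1,\dots,\gamma_{2^n-1}$ recording which of the $2^n-1$ classes each ramified prime falls into; this builds the weight $(2^n-1)^{\omega(m)}$ directly into the parametrization and avoids any quotient ambiguity. You instead parametrize by Kummer subspaces $V\subset \Q^{>0}/(\Q^\times)^2$ and must therefore prove the combinatorial lemma $s_n(k)\asymp_n (2^n-1)^k$; this is genuinely the same count after dividing by the $\operatorname{GL}_n(\F_2)$ relabelling action (count surjections $A\colon\F_2^k\to\F_2^n$ with all $A(e_i)\ne 0$: there are $(2^n-1)^k$ maps with nonzero rows, the non-surjective ones number $O_n((2^{n-1})^k)$ by a union bound over hyperplanes, and each admissible subspace arises from exactly $|\operatorname{GL}_n(\F_2)|$ of them), so your inclusion--exclusion plan goes through. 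Second, for the analytic step the paper contents itself with a volume-under-the-hyperplane estimate for the tuple count, whereas you invoke Selberg--Delange on $\prod_p(1+(2^n-1)p^{-s}) = \zeta(s)^{2^n-1}G(s)$ with $G$ holomorphic and nonvanishing near $s=1$; your route is the more rigorous finish. Two minor points to tidy: the $2$-adic factor $2^{2^{n-1}r}$ with $r\in\{0,2,3\}$ means $\Delta_K\le X$ only sandwiches $P(V)$ between $c_1X^{2^{1-n}}$ and $c_2X^{2^{1-n}}$, which is harmless for an $\asymp$ statement since the final count changes only by constants under rescaling $Y$; and the terms with $\omega(m)<n$ (where $s_n$ vanishes or is degenerate) contribute lower order, so restricting to $k\ge n$ costs nothing.
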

\begin{rmk}
Strictly speaking, Wright's result gives an asymptotic formula to the set of multiquadratic fields of degree $2^n$ with discriminant $\Delta_K \le X$, without distinguishing whether they are real. Through the mapping $\Q(\sqrt{A_{2^0}}, \ldots, \sqrt{A_{2^{n-1}}}) \mapsto \Q(\sqrt{|A_{2^0}|}, \ldots, \sqrt{|A_{2^{n-1}}|})$, it is easy to see that the subset of real fields also satisfy the same asymptotic formula. An alternative proof of the statement is also given in \Cref{sect:fuf}.
\end{rmk}

\subsection{Indecomposable elements in real quadratic fields}\label{sect:qi} 

The structure of the indecomposables in real quadratic fields is well understood. We give a brief overview of the theory here, for it is used in our proofs. All the results here can be found in \cite[Section~2.1]{BK2018}. Let $F = \Q(\sqrt{D})$ be a real quadratic field, where $D\ge 2$ is a squarefree integer. Define
\begin{equation}\label{eq:od} 
\omega_D := \begin{cases} \sqrt{D} & \text{ if } D\equiv 2,3\pmod{4},\\ \frac{1+\sqrt{D}}2 & \text{ if } D\equiv 1\pmod{4}.\end{cases}
\end{equation}
Then $\cb{1,\omega_D}$ forms a $\Z$-basis of $\mc O_F$. We denote by $\ol{\omega_D}$ the Galois conjugate of $\omega_D$. We know that $-\ol{\omega_D}$ has an eventually periodic continued fraction of the form
\begin{equation*}
-\ol{\omega_D} = \sqb{u_0;\ol{u_1,u_2,\ldots,u_{s-1},u_s}}.
\end{equation*}

For $i\in\N_0$, we define the $i$-th \emph{convergent} of $-\ol{\omega_D}$ to be
\ba
\frac{s_i}{t_i} = [u_0;u_1,\ldots,u_i].
\ea
As a convention, we shall also define $s_{-1} := 1$, $t_{-1} := 0$. By a \emph{semiconvergent} of $-\ol{\omega_D}$ we mean a fraction of the form 
\ba
\frac{s_{i,l}}{t_{i,l}} := \frac{s_i+ls_{i+1}}{t_i+lt_{i+1}},  
\ea
with $i\ge -1$ and $0\le l \le u_{i+2}$.

Next we define the elements $\alpha_i := s_i+t_i\omega_D$ and $\alpha_{i,l} := s_{i,l}+t_{i,l}\omega_D = \alpha_i + l\alpha_{i+1}$ in $\mc O_F$. By an abuse of notation, we shall also call $\alpha_i$ (resp. $\alpha_{i,l}$) the convergents (resp. semiconvergents) of $-\ol{\omega_D}$. By a classical theorem (see \cite[Theorem~2]{DS1982}), the set of indecomposable elements in $\mc O_F^+$ is then given precisely by $\alpha_{i,l}$ with $i\ge -1$ odd, and their Galois conjugates. The fundamental unit $\varepsilon$ of $\mc O_F$ is given by $\alpha_{s-1}$, which is totally positive if and only if $s$ is even. 

The set of indecomposable elements in $\mc O_F^+$ is closed under multiplication by totally positive units. A complete list of indecomposable elements of $\mc O_F^+$ up to multiplication by totally positive units is given by
\begin{equation}\label{eq:qil} 
\begin{aligned}
\cbm{\alpha_{i,l}}{-1 \le i < s-1 \text{ odd, } 0 \le l < u_{i+2}} \quad &\text{ if $s$ is even,}\\ \cbm{\alpha_{i,l}}{-1 \le i < 2s-1 \text{ odd, } 0 \le l < u_{i+2}} \quad &\text{ if $s$ is odd.}
\end{aligned}
\end{equation}

\section{Density results}\label{sect:den}

In this section, we prove \Cref{thm:main1,thm:main2,thm:fuf,thm:fi}. 

\subsection{Short lattice vectors and continued fractions} To state the proof of \Cref{thm:fuf}, we need the following theorem from \cite{RSDp2023}, which gives an upper bound to the number of vectors in a totally positive classical quadratic $\Z$-lattice with a given norm.

\begin{thm}[{\cite[Theorem 1.1]{RSDp2023}}]\label{thm:lpn} 
Let $(\Lambda,Q)$ be a positive definite classical quadratic $\Z$-lattice of rank $R$, and $m\in\N$. Let $N_{(\Lambda,Q)}(m)$ denote the number of vectors of norm $m$ in $\Lambda$, that is, the number of elements $v\in \Lambda$ such that $Q(v) = m$. Then we have
\begin{equation}\label{eq:lpnb} 
N_{(\Lambda,Q)}(m) \le 2 \binom{R+2m-2}{2m-1}.
\end{equation}
\end{thm}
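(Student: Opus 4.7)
The plan is to prove the bound by induction on the rank $R$, anchored by two boundary cases that already match the claimed constants. When $R=1$, writing $Q(v)=av^2$ with $a\in\N$ reduces $Q(v)=m$ to $av^2=m$, whose integer solutions number at most $2$, matching $2\binom{2m-1}{2m-1}=2$. When $m=1$, classicality gives $B(u,v)\in\Z$, and the Cauchy--Schwarz bound $B(u,v)^2\le Q(u)Q(v)=1$ forces $B(u,v)\in\{-1,0,1\}$; the extremes yield $v=\pm u$, so distinct $\pm$-pairs of minimal vectors are mutually orthogonal and hence number at most $R$, giving $N(1)\le 2R=2\binom{R}{1}$, once again matching the bound. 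These two anchor cases already suggest that the inductive constants should work out.

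For the inductive step I would fix a primitive $e\in\Lambda$ of minimal norm $\mu=Q(e)\ge 1$, decompose $\Lambda\otimes\Q=\Q e\oplus e^\perp$, and work with the rank-$(R-1)$ projection lattice $\Lambda'\subset e^\perp$, which inherits a positive definite classical form after an appropriate rescaling. Any $v\in\Lambda$ with $Q(v)=m$ admits a unique decomposition $v=ce+w$ with $c\in\Q$ satisfying $c^2\mu\le m$, where $w$ lies in a fixed coset of $\Lambda\cap e^\perp$ inside $\Lambda'$ with $Q(w)=m-c^2\mu$. Summing the inductive bound over admissible values of $c$ then yields a count of the shape $\sum_{k}2\binom{R-1+2k-2}{2k-1}$ where $k$ ranges over the possible values of $m-c^2\mu$.

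The main obstacle is collapsing this sum to exactly $2\binom{R+2m-2}{2m-1}$: a hockey-stick identity of the shape $\sum_{k=0}^{n}\binom{R-1+k}{k}=\binom{R+n}{n}$ almost suffices, but the parity indexing $2k-1$ in the inductive bound, together with the non-trivial index of $\Lambda\cap e^\perp$ inside $\Lambda'$ after rescaling, means the combinatorics requires real care. I would therefore strengthen the inductive hypothesis to bound the number of vectors of norm $m$ in arbitrary \emph{affine cosets} $\Lambda+x_0$ by the same quantity, an affine version of the statement which should allow the decomposition to close cleanly under a Vandermonde-type manipulation, and to absorb the contributions of the various cosets produced by slicing along $e$.

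As a backup, I would keep in reserve a polynomial-method approach: attach to each $v$ with $Q(v)=m$ an element of $\mathrm{Sym}^{2m-1}$ of the ambient dual space (whose dimension is exactly $\binom{R+2m-2}{2m-1}$) and attempt to show that the resulting assignment, modulo $v\mapsto -v$, is injective. A naive choice such as $B(v,\cdot)^{2m-1}$ need not produce linearly independent images, since powers of distinct linear forms can be linearly dependent (already $x^2,y^2,(x+y)^2,(x-y)^2$ span only a $3$-dimensional space), but a twisted assignment involving the quadratic form $Q$ itself may recover enough rigidity to furnish a clean alternative proof of the bound.
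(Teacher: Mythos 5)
First, a point of comparison: the paper does not prove this statement at all --- it is imported verbatim from Regev and Stephens-Davidowitz \cite{RSDp2023} --- so your attempt can only be measured against their argument. Your primary route, induction on the rank by slicing along a minimal vector $e$ with $Q(e)=\mu$, has a genuine gap that goes beyond the ``real care'' you flag. Classifying $v$ by the integer $t=B(v,e)$, where $B$ is the associated bilinear form, the slice at height $t$ consists of vectors $w$ ranging over a \emph{coset} of the integral lattice $\Lambda\cap e^{\perp}$, with $Q(w)=m-t^2/\mu$, which is in general not an integer; so the inductive hypothesis (a classical lattice, an integral norm) simply does not apply, and no single rescaling fixes both defects at once. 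Your proposed repair --- strengthening the statement to affine cosets --- is itself unsupported: the $\pm$-symmetry producing the factor $2$ disappears on a coset, and your own anchor case $m=1$ breaks there, since the orthogonality argument relies on $B(u,w)\in\Z$ for lattice vectors, which is lost for coset elements. Finally, the combinatorial closing step is never verified: the inductive bounds have the shape $2\binom{R-3+2k}{2k-1}$ with $k=m-t^2/\mu$, so the indices jump by $2(t^2-(t-1)^2)/\mu$ as $t$ varies, and no hockey-stick or Vandermonde identity telescopes such a sum to $2\binom{R+2m-2}{2m-1}$; at best one could hope for an inequality, which you do not establish.

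Ironically, your ``backup'' paragraph is the germ of the actual proof in \cite{RSDp2023}, but you stop exactly where the work is. The correct ``twisted assignment'' is $f_v(x)=\prod_{j=-(m-1)}^{m-1}\bigl(B(v,x)-j\bigr)$: if $w\in\Lambda$ has $Q(w)=m$ and $w\neq\pm v$, then Cauchy--Schwarz together with classicality forces $B(v,w)\in\{-(m-1),\dots,m-1\}$, so $f_v(w)=0$, while $f_v(v)=(2m-1)!\neq 0$. Pairing the factors $j$ and $-j$ shows $f_v$ is an \emph{odd} polynomial in the linear form $\ell=B(v,x)$, so replacing each monomial $\ell^{2d+1}$ by $\ell^{2d+1}\bigl(Q(x)/m\bigr)^{m-1-d}$ homogenises $f_v$ to degree $2m-1$ without changing its values on $\{Q=m\}$ --- this is precisely the role of the quadratic form you guessed at. Choosing one representative per sign pair, the evaluation matrix $\bigl(f_{v_i}(v_j)\bigr)$ is diagonal with nonzero diagonal, so the homogenised polynomials are linearly independent in the $\binom{R+2m-2}{2m-1}$-dimensional space of degree-$(2m-1)$ forms, yielding $N_{(\Lambda,Q)}(m)\le 2\binom{R+2m-2}{2m-1}$. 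Your instinct that the naive powers $B(v,\cdot)^{2m-1}$ fail and that $Q$ must enter is correct, but without the vanishing product and the homogenisation step, what you have written is a plan, not a proof.
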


Actually, the theory of root systems yields the following bound for $m=2$ \cite[Theorem 4.10.6, Proposition 4.10.7]{Martinet2003}, which is better than the bound \eqref{eq:lpnb} when $R\ge 11$:
\begin{equation}\label{eq:rsb} 
N_{(\Lambda,Q)}(2) \le \max\cb{480,2R(R-1)}.
\end{equation}
For convenience, we define for $R,m\in\N$
\begin{equation}\label{eq:crmd} 
C(R,m) := \begin{cases} \max\cb{480,2R(R-1)} & \text{ if } m=2,\\ 2 \binom{R+2m-2}{2m-1} & \text{ otherwise.}\end{cases}
\end{equation}

\begin{prp}\label{prp:ufcf} 
Let $n,m\in\N$, and $K = \Q(\sqrt{A_{2^0}}, \ldots, \sqrt{A_{2^{n-1}}})$ be a real multiquadratic field of degree $[K:\Q] = 2^n$, where $A_{2^0}, \ldots, A_{2^{n-1}}$ are squarefree positive integers. Let $\Q(\sqrt{A_j})$, $1\le j \le 2^n-1$, be the quadratic subfields of $K$ as constructed in \Cref{sect:mf}. Let $\omega_j := \omega_{A_j}$ be defined as in \eqref{eq:od}, and let
\ba
-\ol{\omega_j} = [u_{j,0}; \ol{u_{j,1},\ldots,u_{j,s_j}}]
\ea
be the periodic continued fraction of $-\ol{\omega_j}$, and $u := \max\cbm{u_{j,2i+1}}{1\le j \le 2^n-1,\; i\in\N_0}$. Suppose $(\Lambda,Q)$ is an $m\mc O_K$-universal classical quadratic $\mc O_K$-lattice of rank $R$. Then
\ba
u < \frac 12 C(2^nR, 2^{n-1}m),
\ea
where $C(R,m)$ is defined in \eqref{eq:crmd}.
\end{prp}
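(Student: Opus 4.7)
The plan is, for each quadratic subfield $F_j := \Q(\sqrt{A_j}) \sbe K$, to convert the indecomposables of $\mc O_{F_j}^+$ that pair well with a suitably chosen codifferent element $\delta$ into a correspondingly large family of short vectors in the $\Z$-lattice obtained by pushing $Q$ forward through the $\delta$-twisted trace, and then to compare this with the universal upper bound in \Cref{thm:lpn}.

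First I would fix $j\in\{1,\ldots,2^n-1\}$ and an odd index $i\ge -1$, and set $F := F_j$, $u := u_{j,i+2}$. By the description in \Cref{sect:qi}, the semiconvergents $\alpha_{i,l} = \alpha_i + l\alpha_{i+1}\in\mc O_F^+$ for $l = 0,1,\ldots,u$ are pairwise distinct indecomposables of $\mc O_F^+$, and hence also totally positive in $K$. The key algebraic input I would import from \cite{KT2023} is the existence of a totally positive codifferent element $\delta\in\mc O_F^{\vee,+}$ with $\Tr_{F/\Q}(\delta\alpha_i) = 1$ and $\Tr_{F/\Q}(\delta\alpha_{i+1}) = 0$; linearity then gives $\Tr_{F/\Q}(\delta\alpha_{i,l}) = 1$ for every $l$.

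Next, I would endow $\Lambda$ (viewed as a free $\Z$-module of rank $2^nR$) with the $\delta$-twisted trace form
\ba
Q_\delta(v) := \Tr_{K/\Q}(\delta\,Q(v)).
\ea
Integrality follows from $\Tr_{K/\Q} = \Tr_{F/\Q}\circ\Tr_{K/F}$, $\delta\in F$, $\Tr_{K/F}(\mc O_K)\sbe\mc O_F$, and $\delta\in\mc O_F^\vee$, with the same argument applied to the associated bilinear form since $(\Lambda,Q)$ is classical; positive definiteness is immediate from total positivity of $\delta$ and $Q(v)$. Hence $(\Lambda,Q_\delta)$ is a positive definite classical $\Z$-lattice of rank $2^nR$. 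By $m\mc O_K$-universality, for each $l\in\{0,\ldots,u\}$ there is $v_l\in\Lambda$ with $Q(v_l) = m\alpha_{i,l}$, and since $\alpha_{i,l}\in F$,
\ba
Q_\delta(v_l) = m[K:F]\Tr_{F/\Q}(\delta\alpha_{i,l}) = 2^{n-1}m.
\ea

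The vectors $v_l$ are pairwise distinct (their $Q$-values $m\alpha_{i,l}$ are), and one checks $v_l\ne -v_{l'}$ for any $l,l'$ as well, so $\{\pm v_l\}_{l=0}^u$ furnishes $2(u+1)$ distinct vectors in $\Lambda$ of $Q_\delta$-norm $2^{n-1}m$. \Cref{thm:lpn} applied to $(\Lambda,Q_\delta)$ then forces $2(u+1)\le C(2^nR,2^{n-1}m)$, hence $u_{j,i+2}<\tfrac12 C(2^nR,2^{n-1}m)$; maximising over $j$ and odd $i\ge -1$ yields the proposition. The hard part will be the first step: while the dual-basis argument readily produces some $\delta\in F$ with the prescribed trace values on $\alpha_i,\alpha_{i+1}$ (indeed $\alpha_i,\alpha_{i+1}$ form a $\Z$-basis of $\mc O_F$ because the determinant of two consecutive convergents is $\pm 1$), simultaneously ensuring that $\delta\in\mc O_F^\vee$ and $\delta\succ 0$ requires the explicit description of the codifferent together with the sign pattern of convergents dictated by the parity of $i$, which is precisely the content I would import from \cite{KT2023}.
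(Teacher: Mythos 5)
Your proposal matches the paper's proof essentially step for step: the same odd-indexed semiconvergents $\alpha_{i,l}$, the same codifferent element $\delta\in\mc O_F^{\vee,+}$ imported from \cite[Section 3]{KT2023} with $\Tr_{F/\Q}(\delta\alpha_{i,l})=1$, the same $\delta$-twisted trace form making $\Lambda$ a positive definite classical $\Z$-lattice of rank $2^nR$, and the same count of the $2(u+1)$ vectors $\pm v_l$ of norm $2^{n-1}m$. The only nit is that when $2^{n-1}m=2$ the conclusion $2(u+1)\le C(2^nR,2^{n-1}m)$ needs the root-system bound \eqref{eq:rsb} in addition to \Cref{thm:lpn}, exactly as the paper invokes both \eqref{eq:lpnb} and \eqref{eq:rsb}.
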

\begin{proof}
Suppose $u = u_{j,2i+1}$. As in \Cref{sect:qi}, we let $\frac{s_i}{t_i} = [u_{j,0}; u_{j,1},\ldots,u_{j,i}]$ be the $i$-th convergent of $-\ol{\omega_j}$, $\alpha_i = s_i+t_i\omega_j$, and consider the semiconvergents $\beta_l := \alpha_{2i-1}+l\alpha_{2i}$, $0\le l \le u_{2i+1} = u$. From \cite[Section 3]{KT2023}, we can find a totally positive element $\delta \in \mc O_{\Q(\sqrt{A_j})}^{\vee,+}$ in the codifferent such that $\Tr_{\Q(\sqrt{A_j})/\Q}(\delta \beta_l) =1$ for $0 \le l \le u$. It follows that $\Tr_{K/\Q}(\delta \beta_l) = 2^{n-1}$ for $0\le l \le u$. By fixing a $\Z$-basis of $\mc O_K$, we can identify $\Lambda$ with a classical $\Z$-lattice of rank $2^nR$ via an isomorphism $\varphi:\Lambda \xrightarrow{\sim} \Z^{2^nR}$, and equip it with a quadratic form $q(v):= \Tr_{K/\Q}(\delta Q(\varphi^{-1}(v)))$. Since $\delta \in \mc O_{\Q(\sqrt{A_j})}^{\vee,+}$ is totally positive, the quadratic form $q$ on $\Z^{2^nR}$ is positive definite.

From our assumptions, $Q$ represents all of $m\mc O_K^+$. In particular, we can find vectors $w_l \in \Lambda$ such that $Q(w_l) = m\beta_l$. For the corresponding vector $v_l := \varphi(w_l)$, we have $q(\pm v_r) = q(v_r) = \Tr_{K/\Q}(\delta Q(w_r)) = \Tr_{K/\Q}(m\delta \beta_l) = 2^{n-1}m$. Thus the number $N_{(\Z^{2^nR},q)}(2^{n-1}m)$ of vectors of norm $2^{n-1}m$ in the lattice $(\Z^{2^nR}, q)$ satisfies $N_{(\Z^{2^nR},q)}(2^{n-1}m) \ge 2(u+1) > 2u$. Finally, we use \eqref{eq:lpnb} and \eqref{eq:rsb} to conclude that $u < \frac 12 C(2^nR, 2^{n-1}m)$.
\end{proof}

We also make use of the following proposition from \cite{KYZp2023} about the behaviour of the continued fraction representation of $-\ol{\omega_D}$. 

\begin{prp}[{\cite[Corollary 2.13]{KYZp2023}}]\label{prp:scf} 
Let $B\ge 2$ be an integer, and $X\ge 2$ a parameter satisfying $X > B^4 (\log X)^4$. Then we have
\begin{multline*}
\#\cbm{1\le D \le X}{-\ol{\omega_D} \hspace{-0.2cm} \mod 1 = [0; u_1,u_2,\ldots], \; u_{2i-1} \le B \text{ for all } i\in\N}\\
< 50 B^{\frac 32} X^{\frac 78} (\log X)^{\frac 32} + 23 B^3 X^{\frac 34} (\log X)^2.
\end{multline*}
\end{prp}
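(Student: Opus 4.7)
The plan is to parameterize each eligible $D$ by the final convergent $(p_{s-1}, q_{s-1})$ of one period of the continued fraction of $-\ol{\omega_D} \bmod 1$, then count admissible pairs by combining the Pell equation with a matrix analysis of the partial quotients. The main observation is that the period $s$ produces a fundamental unit $\varepsilon_D = p_{s-1}+q_{s-1}\omega_D$ of $\Q(\sqrt{D})$ (up to replacing $\varepsilon$ by $\varepsilon^2$ when $\NN(\varepsilon)=-1$), and $D$ is uniquely recovered from $(p_{s-1},q_{s-1})$ via a Pell-type relation $p_{s-1}^2-Dq_{s-1}^2=\pm 1$; in particular $p_{s-1}\le \sqrt{X}\,q_{s-1}+1$ whenever $D\le X$, so counting $D$ reduces to counting admissible pairs $(p,q)$.

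Next, I would exploit the matrix recursion
\[
\begin{pmatrix} p_i & p_{i-1} \\ q_i & q_{i-1} \end{pmatrix} = \prod_{j\le i} \begin{pmatrix} u_j & 1 \\ 1 & 0 \end{pmatrix}.
\]
Grouping consecutive pairs yields the $2\times 2$ block $\begin{pmatrix} u_{2k-1}u_{2k}+1 & u_{2k-1} \\ u_{2k} & 1 \end{pmatrix}$, whose entries are linear in $u_{2k}$ with coefficients bounded in terms of $u_{2k-1}\le B$. The hypothesis therefore constrains the growth of the denominator at odd steps by a factor of at most $B+1$, while the even-indexed steps remain unconstrained.

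I would then partition the count by the dyadic size of $q=q_{s-1}$: for $q\in(T/2,T]$, the Pell relation gives $D=(p^2\mp 1)/q^2$, and a divisor argument bounds the number of integer pairs $(p,q)$ with $D\le X$ by $O(T\sqrt{X}\log X)$, while the number of admissible sequences of partial quotients producing $q_{s-1}\sim T$ under the odd-bound constraint is bounded by $B^{O(1)}$ times a combinatorial factor, using the matrix interpretation above. Summing over dyadic $T\in[1,\sqrt{X}]$ and balancing these two competing bounds produces the two terms $B^{3/2}X^{7/8}(\log X)^{3/2}$ and $B^3 X^{3/4}(\log X)^2$, corresponding respectively to short periods with moderate even-indexed quotients and long periods with small ones.

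The main obstacle is that the even-indexed partial quotients are unbounded, so a direct enumeration of continued fractions is impossible; the crucial insight is to translate the combinatorial constraint into a Diophantine one through the Pell parameterization, which automatically encodes $D\le X$. A secondary technical nuisance is handling the two cases $D\equiv 1\pmod 4$ versus $D\equiv 2,3\pmod 4$ separately (and distinguishing even and odd period length, which governs the sign of the norm of the fundamental unit), but these affect only the implicit constants, not the final exponents.
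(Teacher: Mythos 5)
This proposition is not proved in the paper at all: it is imported verbatim from \cite[Corollary 2.13]{KYZp2023}, so the relevant comparison is with the argument there. Measured against that (or against any complete proof), your proposal has a fatal structural flaw at its very first step: you parameterize $D$ by the convergent $(p_{s-1},q_{s-1})$ at the \emph{end of the period}, which is exactly the fundamental solution of the Pell equation $p^2-Dq^2=\pm 1$ (equivalently, the fundamental unit $\varepsilon_D=\alpha_{s-1}$, as recalled in Section 2.2 of the paper). This quantity is generically of size $\exp(D^{1/2+o(1)})$, not polynomial in $D$. Consequently your dyadic summation over $T\in[1,\sqrt X]$ covers only those $D$ whose fundamental solution satisfies $q_{s-1}\le \sqrt X$ --- a set of size $O(X^{1/2+\varepsilon})$ by precisely the divisor-type count you invoke --- while almost all $D\le X$ are simply never counted. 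Extending the range of $T$ to exponential height does not repair this: for $T$ large the count of pairs $(p,q)$ with $q\sim T$ and $q^2\mid p^2\mp 1$ is dominated by the trivial $T^{1+o(1)}$ term, which dwarfs $X$, so counting pairs wildly overcounts fields. The inequality $p_{s-1}\le\sqrt X\,q_{s-1}+1$ is true but useless, since it bounds $p$ by $q$ rather than bounding $q$. Moreover, the step where all the difficulty lives --- ``the number of admissible sequences of partial quotients producing $q_{s-1}\sim T$ is $B^{O(1)}$ times a combinatorial factor'' --- is unsubstantiated and false as stated (already for period $2$, sequences with $u_1\le B$ and denominator $\sim T$ number $\gg BT$), and the final exponents $7/8$ and $3/4$ are asserted rather than derived from any inequality in your sketch; the hypothesis $X>B^4(\log X)^4$ is never used.

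The idea you are missing, and which drives the proof in \cite{KYZp2023}, is to work with an \emph{intermediate} convergent at a polynomially bounded scale rather than the end-of-period one. The hypothesis $u_{2i-1}\le B$ controls the growth of the continuants at odd steps by a factor $O(B)$, so for a threshold $Q$ one can always locate a convergent of the appropriate parity whose denominator lies in a window of multiplicative width $O(B)$ around $Q$; writing $\xi_D:=-\ol{\omega_D}\bmod 1$ as $\sqrt D-m$ or $\frac{\sqrt D-m}{2}$ and clearing denominators, such a convergent $p/q$ produces integer data $(q,P,N)$ with $P\asymp q\sqrt D$ and $N=Dq^2-P^2$ confined to a short interval of length roughly $\sqrt X q/q_{i+1}$, so that $D=(P^2+N)/q^2$ is recovered from the triple and each admissible triple accounts for boundedly many $D$. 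Counting triples at scale $Q$ and optimizing (the thresholds $Q\asymp X^{1/8}$ and $Q\asymp X^{1/4}$ are what generate the exponents $\frac78=1-\frac18$ and $\frac34=1-\frac14$, with the powers of $B$ coming from the window widths) yields the two displayed terms. Your instinct to convert the combinatorial constraint into a Diophantine count is sound, but it must be executed at a height that is polynomial in $X$ --- which only intermediate convergents provide --- and it is the bounded odd-indexed quotients, not the Pell equation, that guarantee a usable convergent exists at that height.
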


\subsection{Proof of \texorpdfstring{\Cref{thm:fuf}}{Theorem 1.3}}\label{sect:fuf} 

For $X$ sufficiently large, we prove an upper bound to the number of multiquadratic fields $K$ with $\Delta_K \le X$ which admits an $m\mc O_K$-universal classical quadratic $\mc O_K$-lattice of rank $R$, using \Cref{prp:ufcf} and \Cref{prp:scf}. The case $n=1$ is essentially \cite[Theorem 1.2]{KYZp2023}, so we may assume $n\ge 2$. It is more convenient to count the four types of multiquadratic fields in \eqref{eq:mfa} separately, so the quantity $r$ defined in \eqref{eq:drd} remains stable in the following arguments; the arguments themselves do not depend on the type, however. So we fix a type, and bound the number of fields of that type satisfying the conditions above. We recall that for such multiquadratic fields $K$, the discriminant of $K$ is given by $\Delta_K = (2^r \rad(A_{2^0}\cdots A_{2^{n-1}}))^{2^{n-1}}$, where $r$ is given in \eqref{eq:drd}, depending on the type. 

For the counting, we need an alternative parametrisation of $K = \Q(\sqrt{A_{2^0}},\ldots,\sqrt{A_{2^{n-1}}})$. For $1\le j \le 2^{n-1}$, we write
\ba
j &= \sum_{k=0}^{n-1} 2^k \epsilon_{j,k}, &\quad  \epsilon_{j,k} &\in \cb{0,1}
\ea
for the binary representation of $j$. Let $p$ be a prime dividing $A_{2^0}\cdots A_{2^{n-1}}$, and let $\mc L(p)$ denote the subset of $\cb{0,1,\ldots,n-1}$ so that $p \mid A_{2^k}$ if and only if $k \in \mc L(p)$. Since $A_j$ is constructed as the squarefree part of the product $\prod_{\epsilon_{j,k}=1} A_{2^k}$, it follows that $p\mid A_j$ if and only if an odd number of the $A_{2^k}$'s in the product $\prod_{\epsilon_{j,k}=1} A_{2^k}$ are divisible by $p$. In other words, we have $p\mid A_j$ if and only if the binary representation of $j$ satisfies
\ba
\sum_{k\in\mc L(p)} \epsilon_{j,k} \equiv 1\pmod{2}. 
\ea
This gives a partition of the primes dividing $A_{2^0}\cdots A_{2^{n-1}}$ into $2^n-1$ classes, labelled by nonempty subsets of $\cb{0,1,\ldots,n-1}$, which correspond bijectively to the set of integers $\cb{1,\ldots,2^n-1}$ via the binary representation. So we may define pairwise coprime integers $\gamma_1,\ldots,\gamma_{2^n-1}\in\N$ by
\begin{equation*}\label{eq:gd} 
\gamma_i := \prod_{\substack{p \text{ prime}\\ \mc L(p) = \{k\;|\;\epsilon_{i,k} = 1\}}} p.
\end{equation*}
By construction, we have $\prod_{j=1}^{2^n-1} \gamma_j = \rad(A_{2^0}\cdots A_{2^{n-1}})$, and for $1\le i \le 2^n-1$, we have
\begin{equation}\label{eq:gc} 
A_i = \prod_{j\in\mc I(i)} \gamma_j, \quad \mc I(i) := \cbm{1\le j \le 2^n-1}{\sum_{\substack{0\le k \le n-1\\ \epsilon_{i,k} = 1}} \epsilon_{j,k} \equiv 1\pmod{2}}.
\end{equation}
In this parametrisation, the condition $\Delta_K \le X$ becomes 
\begin{equation}\label{eq:gp} 
\prod_{j=1}^{2^n-1} \gamma_j \le 2^{-r}X^{2^{1-n}} =: X_0.
\end{equation}

Let $\mc K'_X$ denote the set of multiquadratic fields $K$ of the given type, with discriminant $\Delta_K \le X$. Using \eqref{eq:gp}, the problem of finding $\#\mc K'_X$ reduces to that of counting tuples of pairwise coprime squarefree integers (ignoring order) with a bounded product and satisfy some congruence conditions. The number of tuples satisfying \eqref{eq:gp} can be estimated by the volume under the hyperplane $\prod_{j=1}^{2^n-1} \gamma_j = X_0$. Thus we obtain the asymptotic estimate
\begin{equation*}
\#\mc K'_X \asymp X_0 (\log X_0)^{2^n-2}.
\end{equation*} 
Note that this also gives an alternative proof of \Cref{thm:Wright}. 

Next we consider some dyadic blocks of multiquadratic fields. Our strategy is to cover most of $\mc K'_X$ with these dyadic blocks, and show that in each block there are few fields admitting $m\mc O_K$-universal quadratic lattices of rank $R$.

To define the dyadic blocks, let $Y\ll X_0$ be a sufficiently large parameter, and for $1\le j \le 2^n-2$, let $\Gamma_j \ge 1$ be parameters satisfying $\prod_{j=1}^{2^n-2} \Gamma_j\ll X_0$. Let $\mc K'(\Gamma_1,\ldots,\Gamma_{2^n-2};Y)$ denote the dyadic block of multiquadratic fields $K$ of the given type satisfying
\begin{equation}\label{eq:db} 
\frac{\Gamma_j}2 < \gamma_j \le \Gamma_j \quad (1\le j \le 2^n-2), \quad \text{ and } \quad \frac Y2 < \prod_{j=1}^{2^n-1} \gamma_j \le Y.
\end{equation}
The fields in the set $\mc K'(\Gamma_1,\ldots,\Gamma_{2^n-2};Y)$ are then represented by tuples $(\gamma_j)_{1\le j \le 2^n-1}$ satisfying \eqref{eq:db}. By counting the number of such integer tuples, we see that $\#\mc K'(\Gamma_1,\ldots,\Gamma_{2^n-2};Y) \asymp Y$. Writing $\Gamma_{2^n-1} := Y/\prod_{j=1}^{2^n-2}\Gamma_j$, the relations \eqref{eq:gc} say that for $K \in \mc K'(\Gamma_1,\ldots,\Gamma_{2^n-2};Y)$ we have
\begin{equation*}
A_i \asymp \prod_{j\in \mc I(i)} \Gamma_j.
\end{equation*}
Since $\gamma_j$ appears in exactly $2^{n-1}$ of the $A_i$'s, it follows that
\ba
\prod_{i=1}^{2^n-1} A_i \asymp \prod_{j=1}^{2^n-1} \Gamma_j^{2^{n-1}} = Y^{2^{n-1}}.
\ea
In particular, this implies that for any fixed set of parameters $\Gamma_1,\ldots,\Gamma_{2^n-2}$, we can find some $i$ such that we have
\begin{equation}\label{eq:ailb} 
\prod_{j\in \mc I(i)} \Gamma_j \gg Y^{\frac{2^{n-1}}{2^n-1}}.
\end{equation}
We fix this index $i$, and let $\mc A_i$ denote the set of possible values of $A_i$ for $K\in \mc K'(\Gamma_1,\ldots,\Gamma_{2^n-2};Y)$, namely
\ba
\mc A_i := \cbm{A_i = \prod_{j\in\mc I(i)} \gamma_j}{(\gamma_j)_{1\le j \le 2^n-1} \in \mc K'(\Gamma_1,\ldots,\Gamma_{2^n-2};Y)}.
\ea

Next we count, for fixed $A_i \in \mc A_i$, the number of tuples $(\gamma_j)_{1\le j \le 2^n-1}$ (and hence multiquadratic fields) in $\mc K'(\Gamma_1,\ldots,\Gamma_{2^n-2};Y)$ with $\prod_{j\in\mc I(i)} \gamma_j = A_i$. Since the divisor function $\tau(x)$ satisfies $\tau(x)\ll x^\varepsilon$ for every $\varepsilon > 0$ (see \cite[Theorem~317]{HW2008}), there are $\ll Y^\varepsilon$ tuples $(\gamma_j)_{j\in\mc I(i)}$ which satisfy $\prod_{j\in\mc I(i)} \gamma_j = A_i$. Meanwhile, the number of subtuples $(\gamma_j)_{j\not\in\mc I(i)}$ occuring in $\mc K'(\Gamma_1,\ldots,\Gamma_{2^n-2};Y)$ has size $\asymp \prod_{j\not\in\mc I(i)} \Gamma_j$. Therefore, for fixed $A_i\in\mc A_i$ we have
\begin{equation}\label{eq:aim} 
\#\cbm{(\gamma_j)_{1\le j \le 2^n-1} \in \mc K'(\Gamma_1,\ldots,\Gamma_{2^n-2};Y)}{\prod_{j\in\mc I(i)} \gamma_j = A_i} \ll Y^\varepsilon \prod_{j\not\in\mc I(i)} \Gamma_j.
\end{equation}

On the other hand, applying \Cref{prp:scf} (with $X = \prod_{j\in \mc I(i)} \Gamma_j$) yields for $B\ge 2$ and $\prod_{j\in \mc I(i)} \Gamma_j > B^4 (\sum_{j\in\mc I(i)} \log \Gamma_j)^4$ the bound
\begin{multline}\label{eq:aicf} 
\#\cbm{A_i\in \mc A_i}{-\ol{\omega_i} \hspace{-0.2cm} \mod 1 \equiv [0;u_1,u_2,\ldots], \; u_{2j-1}\le B \text{ for all } j\in\N}\\
\ll  B^{\frac 32} \prod_{j\in \mc I(i)} \Gamma_j^{\frac 78+\varepsilon} + B^3 \prod_{j\in \mc I(i)} \Gamma_j^{\frac 34+\varepsilon}.
\end{multline}
Combining \eqref{eq:ailb}, \eqref{eq:aim} and \eqref{eq:aicf}, we conclude for $Y\gg (B \log Y)^{\frac{2^n-1}{2^{n-3}}}$ that
\begin{align*}
&\#\cbm{K\in\mc K'(\Gamma_1,\ldots,\Gamma_{2^n-2};Y)}{-\ol{\omega_i} \hspace{-0.2cm} \mod 1 \equiv [0;u_1,u_2,\ldots], \; u_{2j-1}\le B \text{ for all } j\in\N}\\
&\hspace{2cm} \ll \rb{Y^\varepsilon \prod_{j\not\in\mc I(i)} \Gamma_j} \rb{B^{\frac 32} \prod_{j\in \mc I(i)} \Gamma_j^{\frac 78+\varepsilon} + B^3 \prod_{j\in \mc I(i)} \Gamma_j^{\frac 34+\varepsilon}}\\
&\hspace{2cm} \ll B^{\frac 32} Y^{1+\varepsilon} \prod_{j\in\mc I(i)} \Gamma_j^{-\frac 18+\varepsilon} + B^3 Y^{1+\varepsilon} \prod_{j\in\mc I(i)} \Gamma_j^{-\frac 14+\varepsilon}\\
&\hspace{2cm} \ll B^{\frac 32} Y^{1-\frac{2^{n-4}}{2^n-1}+\varepsilon} + B^3 Y^{1-\frac{2^{n-3}}{2^n-1}+\varepsilon}.
\end{align*}

Finally, we observe that the fields in $\mc K'_X$ with discriminant $X^{1-\frac{2^{n-4}}{2^n-1}} < \Delta_K \le X$ can be covered by $\asymp \log(X_0)^{2^n-2}$ such dyadic blocks by varying the parameters $\Gamma_j$ and $Y$; there are $\ll X_0^{1-\frac{2^{n-4}}{2^n-1}} \log(X_0)^{2^n-2}$ fields remaining with discriminant $\Delta_K \le X^{1-\frac{2^{n-4}}{2^n-1}}$. It follows that for $X_0 \gg (B \log X_0)^{\frac{(2^n-1)^2}{2^{n-3}(2^n-2^{n-4}-1)}}$ we have
\begin{multline*}
\#\cbm{K\in\mc K'_X}{-\ol{\omega_i} \hspace{-0.2cm} \mod 1 \equiv [0;u_1,u_2,\ldots], \; u_{2j-1}\le B \fa j\in\N, 1\le i \le 2^n-1}\\
\ll B^{\frac 32} X_0^{1-\frac{2^{n-4}}{2^n-1}+\varepsilon} + B^3 X_0^{1-\frac{2^{n-3}}{2^n-1}+\varepsilon}.
\end{multline*}
\Cref{thm:fuf} then follows once we put $B = \frac 12 C(2^nR, 2^{n-1}m)$ and apply \Cref{prp:ufcf}.

\subsection{Number of indecomposable elements}

For a totally real number field $F$, the minimal rank of a universal $\mc O_F$-lattice gives a convenient lower bound to the number of indecomposable elements $\iota(F)$. Using \Cref{thm:fuf}, this gives a proof that there are few multiquadratic fields with few indecomposable elements.

To state the proof of \Cref{thm:fi}, we recall that the \emph{Pythagoras number} of a ring $R$ is defined as the smallest integer $s(R)$ such that every sum of squares of elements of $R$ can be expressed as the sum of $s(R)$ squares; if such an integer does not exist, we set $s(R) := \infty$. For a totally real number field $F$, it is well-known that the Pythagoras number $s(\mc O_F)$ is finite, and that $s(\mc O_F)\le f(d)$ where $f(d)$ is  a function that depends only on the degree $d = [F:\Q]$ of $F$ \cite[Corollary 3.3]{KY2021}. The following proposition gives a construction of a classical universal $\mc O_F$-lattice, using indecomposable elements of $\mc O_F$.

\begin{prp}[{\cite[Proposition 7.1]{KT2023}}]\label{prp:ufi} 
Let $F$ be a totally real number field, and $s = s(\mc O_F)$ the Pythagoras number of $\mc O_F$. Let $\mc S = \mc S(\mc O_F)$ denote a set of indecomposable elements in $\mc O_F^+$ up to multiplication by squares of units $(\mc O_F^\times)^2$. Then the diagonal quadratic form
\ba
\sum_{\sigma \in \mc S} \sigma \rb{x_{1,\sigma}^2+x_{2,\sigma}^2+\ldots+x_{s,\sigma}^2}
\ea
defines a universal $\mc O_F$-lattice of rank $s \cdot \#\mc S$.
\end{prp}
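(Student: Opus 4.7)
The plan is to show that any $\alpha\in\mc O_F^+$ can be expressed in the shape $\sum_{\sigma\in\mc S}\sigma(x_{1,\sigma}^2+\cdots+x_{s,\sigma}^2)$ with $x_{j,\sigma}\in\mc O_F$. First I would decompose $\alpha$ as a finite sum of indecomposable elements. The argument is an induction on the trace $\Tr_{F/\Q}(\alpha)$, which is a positive integer for $\alpha\in\mc O_F^+$: if $\alpha$ itself is indecomposable, there is nothing to do; otherwise write $\alpha=\beta+\gamma$ with $\beta,\gamma\in\mc O_F^+$, and note that $\Tr_{F/\Q}(\beta),\Tr_{F/\Q}(\gamma)<\Tr_{F/\Q}(\alpha)$, so the induction applies. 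This yields $\alpha=\sum_{i}\alpha_i$ with each $\alpha_i$ indecomposable.

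Next I would collapse each $\alpha_i$ to the chosen representatives in $\mc S$. By construction of $\mc S$, each $\alpha_i$ equals $\sigma_i\cdot u_i^2$ for some $\sigma_i\in\mc S$ and $u_i\in\mc O_F^\times$. (Note that $u_i^2$ is automatically totally positive, so this is consistent with the total positivity of $\alpha_i$ and $\sigma_i$.) Regrouping the sum according to which $\sigma\in\mc S$ appears, we obtain
\[
\alpha=\sum_{\sigma\in\mc S}\sigma\cdot v_\sigma,\qquad v_\sigma:=\sum_{i:\,\sigma_i=\sigma}u_i^2\in\mc O_F,
\]
where each $v_\sigma$ is a (possibly empty) sum of squares of elements of $\mc O_F$.

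The final step is to invoke the definition of the Pythagoras number. Since each $v_\sigma$ is a sum of squares in $\mc O_F$, it can be written as a sum of at most $s=s(\mc O_F)$ squares $v_\sigma=x_{1,\sigma}^2+\cdots+x_{s,\sigma}^2$ with $x_{j,\sigma}\in\mc O_F$ (padding with zeros if fewer suffice, or taking all $x_{j,\sigma}=0$ if $v_\sigma=0$). Substituting back gives the desired representation of $\alpha$, proving that the diagonal form $\sum_{\sigma\in\mc S}\sigma(x_{1,\sigma}^2+\cdots+x_{s,\sigma}^2)$ represents every totally positive integer and hence is universal. Its rank is $s\cdot\#\mc S$ by construction.

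I do not anticipate a genuine obstacle here; the only delicate point is the organisational step of grouping by $\sigma\in\mc S$ so that the Pythagoras number can be applied to each $v_\sigma$ \emph{inside} $\mc O_F$ rather than to the non-square-like expression $\sum_i\alpha_i$ directly. The finiteness of $s(\mc O_F)$ for totally real number fields, recalled just before the statement, is exactly what makes this regrouping possible and yields a \emph{uniform} bound on the rank.
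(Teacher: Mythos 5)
Your argument is correct and coincides with the proof of \cite[Proposition 7.1]{KT2023}, which the paper cites rather than reproves: decompose $\alpha\in\mc O_F^+$ into finitely many indecomposables (your induction on $\Tr_{F/\Q}(\alpha)$, using that the trace of a totally positive integer is a positive rational integer, is the standard justification), write each indecomposable as $\sigma u^2$ with $\sigma\in\mc S$ and $u\in\mc O_F^\times$, and apply the Pythagoras number to each regrouped sum of unit squares $v_\sigma$. The regrouping so that $s(\mc O_F)$ is applied inside $\mc O_F$ to each $v_\sigma$ separately, which you flag as the delicate point, is indeed the crux, and you handle it exactly as in the cited proof.
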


\begin{proof}[Proof of \Cref{thm:fi}]
We apply \Cref{prp:ufi} to our multiquadratic field $K$. Consider the inclusion $(\mc O_K^\times)^2 \sbe \mc O_K^{\times,+} \sbe \mc O_K^\times$. Since we have $[\mc O_K^\times : (\mc O_K^\times)^2] = 2^{[K:\Q]} = 2^{2^n}$ and $[\mc O_K^\times:\mc O_K^{\times,+}] \ge 2$ (because of the element $-1$), it follows that the set $\mc S(\mc O_K)$ has size $\#\mc S(\mc O_K) \le 2^{2^n-1} \iota(K)$. On the other hand, we have seen that the Pythagoras number $s(\mc O_K)$ is bounded by a function depending only on $n$. From \Cref{prp:ufi}, it follows that
\ba
R_{\operatorname{cls}}(K) \le s(\mc O_K) \cdot \#\mc S(\mc O_K) \ll_n \iota(K).
\ea
The theorem then follows immediately from \Cref{thm:fuf}.
\end{proof}

\subsection{Proof of \texorpdfstring{\Cref{thm:main1,thm:main2}}{Theorems 1.1 and 1.2}} Having established \Cref{thm:fuf,thm:fi}, we are ready to prove \Cref{thm:main1,thm:main2}.
\begin{proof}[Proof of \Cref{thm:main1}]
Let $X$ be a sufficiently large parameter. To obtain density results for $R_{\operatorname{cls}}(K)$, we choose the parameter $R$ such that \Cref{thm:fuf} gives
\ba
\scalebox{0.97}{$\displaystyle \mc K_{\operatorname{univ}}(n,X,R,1) \ll_{n,\varepsilon} C(2^nR,2^{n-1})^{\frac 32} X^{2^{1-n}\rb{1-\frac{2^{n-4}}{2^n-1}}+\varepsilon} + C(2^nR,2^{n-1})^3 X^{2^{1-n}\rb{1-\frac{2^{n-3}}{2^n-1}}+\varepsilon} \ll X^{2^{1-n}-\varepsilon}.$}
\ea
From \eqref{eq:crmd}, we see that for fixed $m\in\N$, we have as $R\to\infty$
\begin{equation}\label{eq:crma} 
C(R,m) \ll \begin{cases} R^2 & \text{ if } m=2,\\ R^{2m-1} & \text{ otherwise.} \end{cases}
\end{equation}
It follows that we may take $R = X^{\frac 1{72}-\varepsilon}$ when $n=2$, and $R = X^{\frac 1{12(2^n-1)^2}-\varepsilon}$ otherwise. Note that for these choices, the condition $X \gg (C(2^nR, 2^{n-1}) \log X)^{\frac{4(2^n-1)^2}{2^n-2^{n-4}-1}}$ in \Cref{thm:fuf} is always satisfied. This proves the density results for $R_{\operatorname{cls}}(K)$.

Meanwhile, to obtain density results for $R(K)$, we note that if $(\Lambda,Q)$ is a (possibly) non-classical universal lattice, then $(\Lambda,2Q)$ is a classical $2\mc O_K$-universal lattice of the same rank. Hence, in this case we choose the parameter $R$ such that \Cref{thm:fuf} gives
\ba
\mc K_{\operatorname{univ}}(n,X,R,2) \ll_{n,\varepsilon} C(2^nR,2^n)^{\frac 32} X^{2^{1-n}\rb{1-\frac{2^{n-4}}{2^n-1}}+\varepsilon} + C(2^nR,2^n)^3 X^{2^{1-n}\rb{1-\frac{2^{n-3}}{2^n-1}}+\varepsilon} \ll X^{2^{1-n}-\varepsilon}.
\ea
By \eqref{eq:crma}, we may take $R = X^{\frac 1{24}-\varepsilon}$ when $n=1$, and $R = X^{\frac 1{12(2^n-1)(2^{n+1}-1)}-\varepsilon}$ otherwise. Again, for these choices the condition $X \gg (C(2^nR, 2^n) \log X)^{\frac{4(2^n-1)^2}{2^n-2^{n-4}-1}}$ in \Cref{thm:fuf} is always satisfied. This proves the density results for $R(K)$.
\end{proof}

\begin{proof}[Proof of \Cref{thm:main2}]
The proof is completely analogous to that of \Cref{thm:main1}, the only difference being that we use \Cref{thm:fi} instead of \Cref{thm:fuf}.
\end{proof}

\section{Preservation of indecomposability}\label{sect:pi} 

In \Cref{sect:pi,sect:if}, we focus on biquadratic fields. First we establish the notations used in these sections.

\subsection{The setup}\label{sect:bf} 
Let $p$ and $q$ to be distinct squarefree positive integers, and $K := \Q(\sqrt{p},\sqrt{q})$ a biquadratic field. Given $p$ and $q$, we fix $r = \frac{pq}{\gcd(p,q)^2}$. The biquadratic field $K$ has three quadratic subfields, namely $K_p := \Q(\sqrt{p})$, $K_q := \Q(\sqrt{q})$, and $K_r := \Q(\sqrt{r})$. There are four embeddings of $K$ into $\R$. Writing $\alpha = x+y\sqrt{p}+z\sqrt{q}+w\sqrt{r} \in K$, with $x,y,z,w\in\Q$, the embeddings are given as follows:
\begin{align*}
\sigma_1(\alpha) &= x+y\sqrt{p}+z\sqrt{q}+w\sqrt{r}, & \sigma_2(\alpha) &= x-y\sqrt{p}+z\sqrt{q}-w\sqrt{r},\\
\sigma_3(\alpha) &= x+y\sqrt{p}-z\sqrt{q}-w\sqrt{r}, & \sigma_4(\alpha) &= x-y\sqrt{p}-z\sqrt{q}+w\sqrt{r}.
\end{align*}
Swapping $p,q,r$ if necessary, every biquadratic field belongs to exactly one of the following types: 
\ben
\item $p\equiv 2 \pmod{4}$, $q\equiv 3\pmod{4}$,
\item $p\equiv 2,3 \pmod{4}$, $q\equiv 1\pmod{4}$,
\item $p\equiv q\equiv 1\pmod{4}$, $\gcd(p,q) \equiv 1\pmod{4}$,
\item $p\equiv q\equiv 1\pmod{4}$, $\gcd(p,q) \equiv 3\pmod{4}$.
\ee
Note that in all cases, we have $p\equiv r\pmod{4}$, so $p$ and $r$ are interchangeable, and we may always assume $p<r$. Moreover, for biquadratic fields of types (3) and (4), $p,q,r$ are all interchangeable, and we may further assume $p<q<r$. For each of the types above, an integral basis of $\mc O_K$ is given by (also see \cite[Theorem 2]{Williams1970})
\ben
\item $\cb{1,\sqrt{p},\sqrt{q}, \frac{\sqrt{p}+\sqrt{r}}{2}}$,
\item $\cb{1,\sqrt{p},\frac{1+\sqrt{q}}{2}, \frac{\sqrt{p}+\sqrt{r}}{2}}$,
\item $\cb{1,\frac{1+\sqrt{p}}{2},\frac{1+\sqrt{q}}{2}, \frac{1+\sqrt{p}+\sqrt{q}+\sqrt{r}}{4}}$,
\item $\cb{1,\frac{1+\sqrt{p}}{2},\frac{1+\sqrt{q}}{2}, \frac{1-\sqrt{p}+\sqrt{q}+\sqrt{r}}{4}}$.
\ee

\subsection{Diophantine approximations}
For the proof of \Cref{thm:pii}, we also need some results on Diophantine approximations. 

Let $\alpha \in \R$, and $\frac st \in \Q$, with $s\in\Z$, $t\in\N$. We say $\frac st$ is a \emph{best Diophantine approximation of the second kind} of $\alpha$ if we have
\ba
\vb{s-t\alpha} < \vb{s'-t'\alpha}
\ea
for all $(s',t') \ne (s,t)$, with $s'\in\Z$, $t'\in\N$, $t'\le t$. In a similar fashion, we consider one-sided Diophantine approximations. We say $\frac st$ is a \emph{best upper bound of the second kind} of $\alpha$ if we have
\ba
s'-t'\alpha > 0 \implies 0 < s-t\alpha < s'-t'\alpha
\ea
for all $(s',t') \ne (s,t)$, with $s'\in\Z$, $t'\in\N$, $t'\le t$. Analogously, we say $\frac st$ is a \emph{best lower bound of the second kind} of $\alpha$ if we have
\ba
s'-t'\alpha < 0 \implies s'-t'\alpha < s-t\alpha < 0
\ea
for all $(s',t') \ne (s,t)$, with $s'\in\Z$, $t'\in\N$, $t'\le t$. 
\begin{rmk}
In the following arguments, we often say that a fraction $\frac{s'}{t'}$ is a ``better Diophantine approximation of the second kind'' of $\alpha$ than another fraction $\frac st$; by this we mean $\vb{s'-t'\alpha} < \vb{s-t\alpha}$. In particular, when $\frac st$ is a best Diophantine approximation of the second kind of $\alpha$, then this implies $t'>t$. The phrases ``better upper (resp. lower) bound of the second kind'' are interpreted analogously.
\end{rmk}

These Diophantine approximations are well understood in terms of continued fractions. A survey in these topics can be found in \cite[Chapter~II]{Perron1977} and \cite[Section~4]{HT2019}. Let
\ba
\alpha = [u_0; u_1,u_2,\ldots]
\ea
be the continued fraction representation of $\alpha$. For $i\in\N_0$, the $i$-th \emph{convergent} of $\alpha$ is given by
\ba
\frac{s_i}{t_i} := [u_0;u_1,\ldots,u_i].
\ea
As a convention, we shall also define $s_{-1} := 1$, $t_{-1} := 0$. Note that for $i\in\N_0$ we have $\frac{s_i}{t_i} \ge \alpha$ when $i$ is odd, and $\frac{s_i}{t_i} \le \alpha$ when $i$ is even. Accordingly, we call these the \emph{upper} (resp. \emph{lower}) \emph{convergents} of $\alpha$ respectively. The numbers $s_i,t_i$ satisfy the relation
\begin{equation}\label{eq:cm1} 
s_{i-1}t_i - s_it_{i-1} = (-1)^i, \quad i\ge 0.
\end{equation}

The \emph{semiconvergents} of $\alpha$ are given by
\ba
\frac{s_{i,l}}{t_{i,l}} := \frac{s_i+ls_{i+1}}{t_i+lt_{i+1}}, \qquad (0\le l \le u_{i+2}-1).
\ea 
Since the semiconvergents $\frac{s_{i,l}}{t_{i,l}}$ lie between $\frac{s_i}{t_i}$ and $\frac{s_{i+2}}{t_{i+2}}$, we have $\frac{s_{i,l}}{t_{i,l}} \ge \alpha$ when $i$ is odd, and $\frac{s_{i,l}}{t_{i,l}} \le \alpha$ when $i$ is even. Accordingly, we call these the \emph{upper} (resp. \emph{lower}) \emph{semiconvergents} of $\alpha$ respectively. Note that we always have $\gcd(s_{i,l},t_{i,l}) = 1$. We have the following well-known theorems (see \cite[Theorem~4.5]{HT2019}).
\begin{thm}\label{thm:bb} 
\ben
\item The set of best Diophantine approximation of the second kind of $\alpha$ is given precisely by the set of convergents $\frac{s_i}{t_i}$ for $i\ge 0$.
\item The set of best upper bounds of the second kind of $\alpha$ is given precisely by the set of upper semiconvergents $\frac{s_{i,l}}{t_{i,l}}$ for $i\ge -1$ odd, except for the pair $(i,l) = (-1,0)$. 
\item The set of best lower bounds of the second kind of $\alpha$ is given precisely by the set of lower semiconvergents $\frac{s_{i,l}}{t_{i,l}}$ for $i\ge 0$ even.
\ee
\end{thm}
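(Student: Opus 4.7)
The plan is to prove the three parts separately, exploiting the unimodular identity \eqref{eq:cm1} together with the sign pattern of $s_i - t_i\alpha$.

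First I would collect two preliminary facts. By \eqref{eq:cm1}, the matrix $\bsm s_{i-1} & s_i \\ t_{i-1} & t_i \esm$ has determinant $\pm 1$, so every integer pair $(s',t')$ can be written uniquely as $(s',t') = a(s_{i-1},t_{i-1}) + b(s_i,t_i)$ with $a,b\in\Z$. Second, the quantities $s_i - t_i\alpha$ alternate in sign (positive for $i$ odd, negative for $i$ even), and from the standard expression
\ba
s_i - t_i\alpha = \frac{(-1)^{i+1}}{t_i \alpha^*_{i+1} + t_{i-1}}, \qquad \alpha^*_{i+1} := [u_{i+1}; u_{i+2},\ldots],
\ea
the absolute values $\vb{s_i - t_i\alpha}$ are strictly decreasing in $i$; an analogous formula for the semiconvergents shows that $\vb{s_{i,l} - t_{i,l}\alpha}$ is strictly decreasing in $l$ for each fixed $i$.

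For Part (1), given $(s',t') \ne (s_i,t_i)$ with $1 \le t' \le t_i$, I would expand $(s',t') = a(s_{i-1},t_{i-1}) + b(s_i,t_i)$. The denominator bound forces either $a=0$ (which then forces $(s',t') = (s_i,t_i)$, contradicting our choice) or $a$ and $b$ to have opposite signs. Since $s_{i-1}-t_{i-1}\alpha$ and $s_i - t_i\alpha$ also have opposite signs, the two summands in $s'-t'\alpha = a(s_{i-1}-t_{i-1}\alpha) + b(s_i-t_i\alpha)$ reinforce, yielding $\vb{s'-t'\alpha} \ge \vb{s_{i-1}-t_{i-1}\alpha} > \vb{s_i-t_i\alpha}$. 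This shows that each convergent is a best approximation of the second kind; conversely, any best approximation with $t_{i-1} < t \le t_i$ must equal $(s_i,t_i)$ by the same calculation.

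For Parts (2) and (3) I would mimic this argument one side at a time. Consider Part (3): a lower semiconvergent $\frac{s_{i,l}}{t_{i,l}}$ with $i \ge 0$ even satisfies $s_{i,l} - t_{i,l}\alpha < 0$. Given a competitor $(s',t')$ with $s' - t'\alpha < 0$ and $t' \le t_{i,l}$, I expand in the basis $\{(s_i,t_i),(s_{i+1},t_{i+1})\}$ (again of determinant $\pm 1$); the sign constraint combined with the denominator bound leaves only finitely many viable coefficient pairs, and a direct check shows that $(s',t')$ is either a lower semiconvergent $(s_{i,l'},t_{i,l'})$ with $l' < l$ (giving a strictly weaker bound by the monotonicity above) or satisfies $\vb{s'-t'\alpha} > \vb{s_{i,l}-t_{i,l}\alpha}$. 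Part (2) is identical after swapping parities; the exceptional pair $(i,l)=(-1,0)$ corresponds to $(s_{-1},t_{-1}) = (1,0)$, which is not a genuine rational, and the corresponding value $s_{-1}-t_{-1}\alpha = 1$ is only a trivial bound. The main obstacle will be the bookkeeping in this one-sided setting: enumerating the viable coefficient pairs $(a,b)$ without case fragmentation and handling the $i=-1$ boundary cleanly.
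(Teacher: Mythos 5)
The paper never proves \Cref{thm:bb}: it is quoted as a classical result with a pointer to \cite[Theorem~4.5]{HT2019} (see also \cite[Chapter~II]{Perron1977}), so there is no internal proof to compare against. Your argument is precisely the standard one underlying those references --- expand a competitor $(s',t')$ in the unimodular basis of two consecutive convergents via \eqref{eq:cm1}, use the alternating signs of $s_i-t_i\alpha$ and the identity $s_i-t_i\alpha=(-1)^{i+1}/(t_i\alpha_{i+1}^*+t_{i-1})$ for monotonicity of the defects, and in the one-sided cases note that the defects $s_{i,l}-t_{i,l}\alpha$ keep a fixed sign and shrink strictly in $l$. Your treatment of the excluded pair $(i,l)=(-1,0)$ is also the right one: it corresponds to $(s_{-1},t_{-1})=(1,0)$, whose denominator is not in $\N$. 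In outline this is correct and is, in substance, the proof the paper outsources.

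Two concrete repairs are needed. First, in part (1) the asserted trichotomy ``$a=0$ or $a,b$ of opposite signs'' is not exhaustive: $b=0$, $a\ge 1$ is possible (harmless, since then $\vb{s'-t'\alpha}=a\vb{s_{i-1}-t_{i-1}\alpha}$), but more seriously at $i=0$ one has $t_{-1}=0$, so same-sign pairs such as $(s',t')=(s_{-1},t_{-1})+(s_0,t_0)=(u_0+1,1)$ do satisfy $t'\le t_0$ and your case split silently excludes them. This is exactly the classical boundary caveat: when the fractional part of $\alpha$ exceeds $\frac 12$ one has $u_1=1$, hence $t_1=t_0=1$ and $\vb{s_1-t_1\alpha}<\vb{s_0-t_0\alpha}$, so $\frac{s_0}{t_0}$ is \emph{not} a best approximation of the second kind under the paper's definition; a complete writeup must treat $i=0$ separately (the applications in the paper are unaffected, since they go through parts (2), (3) and \Cref{cor:bbc}). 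Second, in part (3) the phrase ``finitely many viable coefficient pairs'' is inaccurate: the sign and denominator constraints leave the infinite family $a\ge 1$, $b\le l$, but these are disposed of uniformly --- $a\le 0$ forces $s'-t'\alpha>0$; $a=1$, $0\le b<l$ gives a lower semiconvergent with strictly worse defect; and $a\ge 2$ or $b<0$ gives $(a-1)(s_i-t_i\alpha)<(l-b)(s_{i+1}-t_{i+1}\alpha)$, i.e.\ a strictly worse lower bound. With this uniform check your case analysis also delivers the converse inclusion (a competitor that is not a semiconvergent is always beaten by a semiconvergent of no larger denominator, so best one-sided bounds must be semiconvergents), which your sketch should state explicitly.
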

\begin{cor}\label{cor:bbc} 
Let $\frac{s_{i,l}}{t_{i,l}}$ be a semiconvergent of $\alpha$ that is not a convergent. If $s'\in\Z$, $t'\in\N$, $t' \le t_{i,l}$ satisfy
\begin{equation*}
\vb{s'-t'\alpha} < \vb{s_{i,l}-t_{i,l}\alpha},
\end{equation*}
then $\frac{s'}{t'} = \frac{s_{i+1}}{t_{i+1}}$. 
\end{cor}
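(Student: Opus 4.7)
My plan is to apply Theorem~\ref{thm:bb} and split into two cases depending on the sign of $s'-t'\alpha$. Without loss of generality I take $i$ to be odd; the case $i$ even is entirely analogous after swapping upper and lower bounds. Since the semiconvergent $\frac{s_{i,l}}{t_{i,l}}$ is not a convergent, $1 \le l \le u_{i+2}-1$, and Theorem~\ref{thm:bb}(ii) asserts that $\frac{s_{i,l}}{t_{i,l}}$ is a best upper bound of the second kind of $\alpha$.

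The case $s'-t'\alpha > 0$ is immediate from the best upper bound property: any $(s', t') \ne (s_{i,l}, t_{i,l})$ with $t' \le t_{i,l}$ and $s'-t'\alpha > 0$ satisfies $0 < s_{i,l}-t_{i,l}\alpha < s'-t'\alpha$ by definition, directly contradicting the hypothesis $|s'-t'\alpha| < |s_{i,l}-t_{i,l}\alpha|$.

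The substantive case is $s'-t'\alpha < 0$. My key tool will be the identity $|s_{i+1}t_{i,l} - s_{i,l}t_{i+1}| = |s_{i+1}t_i - s_it_{i+1}| = 1$, which follows from expanding $s_{i,l}, t_{i,l}$ via their definitions and applying~\eqref{eq:cm1}. Consequently $\{(s_{i,l}, t_{i,l}), (s_{i+1}, t_{i+1})\}$ is a $\Z$-basis of $\Z^2$, and I may uniquely write $(s', t') = a(s_{i,l}, t_{i,l}) + b(s_{i+1}, t_{i+1})$ with $a, b \in \Z$. Setting $P := s_{i,l}-t_{i,l}\alpha > 0$ and $Q := -(s_{i+1}-t_{i+1}\alpha) > 0$, the identity $s'-t'\alpha = aP - bQ$ recasts the two approximation hypotheses as the sandwich $a < bQ/P < a+1$, together with the integer constraint $1 \le a t_{i,l} + b t_{i+1} \le t_{i,l}$.

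A case analysis on the sign of $a$ then forces $a = 0$. The recursion $s_{i+2} = u_{i+2}s_{i+1} + s_i$ gives $s_i - t_i\alpha = u_{i+2}Q + R$ with $R := s_{i+2} - t_{i+2}\alpha \in (0, Q)$, and consequently $P = (u_{i+2}-l)Q + R$, so $P/Q \in (u_{i+2}-l,\, u_{i+2}-l+1)$ and in particular $P/Q > 1$. If $a \ge 1$, then $bQ > aP \ge P > 0$ forces $b \ge 1$, and so $t' \ge t_{i,l} + t_{i+1} > t_{i,l}$, violating the upper bound on $t'$. If $a \le -1$, the sandwich puts $bQ/P$ in a negative interval, forcing $b \le -1$ and $t' \le -t_{i,l} - t_{i+1} < 1$, violating $t' \ge 1$. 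Thus $a = 0$, so $(s', t') = b(s_{i+1}, t_{i+1})$ for some positive integer $b$, giving $\frac{s'}{t'} = \frac{s_{i+1}}{t_{i+1}}$ as rationals. The main delicate point will be tracking signs consistently through the case analysis and the parity reduction; no substantive obstacle is expected beyond this.
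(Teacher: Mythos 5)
Your proof is correct, and for the substantive case it takes a genuinely different route from the paper's. Both arguments open the same way: \Cref{thm:bb} immediately dispatches the case where $s'-t'\alpha$ has the same sign as $s_{i,l}-t_{i,l}\alpha$, since the semiconvergent is a best one-sided bound of the second kind. For the opposite-sign case the paper stays entirely within the framework of \Cref{thm:bb}: it observes that $\frac{s'}{t'}$ is then a lower bound (for $i$ odd) that beats the convergent $\frac{s_i}{t_i}$ as a two-sided approximation, so $t'>t_i$, and then invokes uniqueness of the best lower bound of the second kind with denominator in the window between $t_i$ and $t_{i,l}$, namely $\frac{s_{i+1}}{t_{i+1}}$. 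You instead expand $(s',t')$ in the unimodular basis $\cb{(s_{i,l},t_{i,l}),(s_{i+1},t_{i+1})}$ of $\Z^2$ (justified by $s_{i+1}t_{i,l}-s_{i,l}t_{i+1}=s_{i+1}t_i-s_it_{i+1}=\pm 1$, which is correct) and run a sign analysis on the coefficient $a$: the sandwich $a<bQ/P<a+1$ together with $1\le at_{i,l}+bt_{i+1}\le t_{i,l}$ forces $a=0$ and $b\ge 1$, and all three cases check out. Your computation buys two things: it is more self-contained, using \Cref{thm:bb} only for the trivial case, and it handles transparently the non-primitive solutions $(s',t')=b(s_{i+1},t_{i+1})$ with $b\ge 2$ --- which can genuinely occur when $u_{i+2}-l\ge 2$, and which is exactly why the corollary concludes equality of fractions rather than of pairs; the paper's one-line uniqueness argument leaves the reduction from arbitrary lower-bound pairs to best lower bounds (necessarily in lowest terms) implicit. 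What the paper's route buys is brevity and reuse of the already-stated classification. One small observation: the estimate $P/Q>1$ via $P=(u_{i+2}-l)Q+R$ is never actually needed --- in each of your three cases the sign of $b$ already follows from $P,Q>0$ and the sandwich --- so that computation (and with it any real use of the ``not a convergent'' hypothesis beyond avoiding the exceptional pair $(i,l)=(-1,0)$ in \Cref{thm:bb}) can be deleted; also, $R\in(0,Q)$ deserves the one-line justification that $i+2$ is again odd and that the errors $\vb{s_j-t_j\alpha}$ strictly decrease.
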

\begin{proof}
Suppose $\frac{s_{i,l}}{t_{i,l}}$ is an upper semiconvergent. From \Cref{thm:bb}, $\frac{s_{i,l}}{t_{i,l}}$ is a best upper bound of the second kind. This implies $\frac{s'}{t'}\le \alpha$ is a lower bound. Since $\frac{s'}{t'}$ is a better Diophantine approximation of the second kind of $\alpha$ than the $i$-th convergent $\frac{s_i}{t_i}$, we have $t' \ge t_i$. But there is a unique best lower bound of the second kind of $\alpha$ with denominator between $t_i$ and $t_{i,l} = t_i + lt_{i+1}$, namely the $(i+1)$-th convergent $\frac{s_{i+1}}{t_{i+1}}$. The proof for the case of a lower semiconvergent is completely analogous.
\end{proof}

Finally, we note that the notions of convergents and semiconvergents in \Cref{sect:qi} match the constructions of convergents and semiconvergents here. This allows us to apply techniques from Diophantine approximations to study indecomposable elements and prove \Cref{thm:pii}. 

\subsection{Proof of \texorpdfstring{\Cref{thm:pii}}{Theorem 1.5}}
Here we prove \Cref{thm:pii}, in the form of the following theorem.

\begin{thm}\label{thm:pid} 
Let $p,q$ be squarefree positive integers satisfying the congruences given in \Cref{sect:bf}. Let $K = \Q(\sqrt{p},\sqrt{q})$ be a real biquadratic field, and $r = \frac{pq}{\gcd(p,q)^2}$. 
\ben
\item Suppose $K$ is a biquadratic field of type (1) or (2), and $p<r$. Then the indecomposable elements in $\mc O_{K_p}^+$ and $\mc O_{K_q}^+$ remain indecomposable in $\mc O_K^+$.
\item Suppose $K$ is a biquadratic field of type (3) or (4), and $p<q<r$. Then the indecomposable elements in $\mc O_{K_p}^+$ and $\mc O_{K_q}^+$ remain indecomposable in $\mc O_K^+$.
\ee
\end{thm}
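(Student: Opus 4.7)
My plan is to argue by contradiction: suppose $\alpha \in \mc O_{K_j}^+$ is indecomposable but admits a nontrivial decomposition $\alpha = \beta + \gamma$ in $\mc O_K^+$ with $\beta, \gamma \neq 0$. The fundamental auxiliary tool I would set up is a \emph{projection principle}: for any quadratic subfield $K_i$ of $K$, letting $\tau_i$ denote the nontrivial element of $\Gal(K/K_i)$ and $\beta_i := \tfrac12(\beta + \tau_i(\beta)) \in K_i$, a direct computation using the four real embeddings of $K$ listed in \Cref{sect:bf} shows that $\tau_i$ permutes them in pairs. From this one deduces that $\beta_i$ is totally positive in $K_i$, satisfies $0 \prec \beta_i \prec \alpha_i$ (where $\alpha_i$ is the $K_i$-projection of $\alpha$), and lies in $\tfrac12 \mc O_{K_i}$ since $2\beta_i = \beta + \tau_i(\beta) \in \mc O_{K_i}$. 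Applied to $i = j$, for which $\alpha_j = \alpha$: if $\beta_j \in \mc O_{K_j}$, then $\alpha = \beta_j + (\alpha - \beta_j)$ is a nontrivial decomposition in $\mc O_{K_j}^+$ (nonvanishing of both summands is immediate from total positivity), contradicting indecomposability. So the only remaining obstacle is the case $\beta_j \in \tfrac12 \mc O_{K_j} \setminus \mc O_{K_j}$.

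I would then dispose of one of the two claimed subfields (say $K_q$ in types (1) and (2), and the analogous subfield in types (3) and (4)) by an integral-basis computation: expanding $\beta = x + y\sqrt{p} + z\sqrt{q} + w\sqrt{r}$ in the explicit basis of \Cref{sect:bf}, the projection $\beta_q = x + z\sqrt{q}$ turns out to lie automatically in $\mc O_{K_q}$ for every $\beta \in \mc O_K$ (in types (1)--(2) because the coefficients $x, z$ are always integers; in types (3)--(4) by a slightly more delicate parity check that exploits the specific shape of the basis element $\tfrac14(1 \pm \sqrt{p} + \sqrt{q} + \sqrt{r})$). The projection principle then immediately gives that indecomposables in this subfield remain indecomposable in $\mc O_K^+$.

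For the second claimed subfield $K_p$ (under the ordering $p < r$, or $p < q < r$), the projection $\beta_p$ may be a genuine half-integer, so the projection principle alone is insufficient. I would supplement it with the continued-fraction description of indecomposables in \Cref{sect:qi}: writing $\alpha = s + t\omega_p$ as an upper semiconvergent of $-\overline{\omega_p}$ (the other case being analogous), the conjugate embedding $\sigma_2(\alpha) = s + t\overline{\omega_p}$ is small. The tight positivity constraints $0 < \sigma_2(\beta), \sigma_4(\beta) < \sigma_2(\alpha)$ give, upon subtracting, the Diophantine inequality $|z\sqrt{q} - w\sqrt{r}| < \tfrac12 \sigma_2(\alpha)$; after dividing by $\sqrt{q}$ this becomes an integer pair $(u, v)$ of a prescribed odd parity (forced by $\beta_p \notin \mc O_{K_p}$) satisfying $|u - v\sqrt{p}/\gcd(p,q)| < \sigma_2(\alpha)/\sqrt{q}$. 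At the same time, applying the projection principle with $i = r$ and using that the $K_r$-projection of $\alpha \in K_p$ is the rational integer $\tfrac12 \Tr_{K_p/\Q}(\alpha)$ combined with the half-integrality of $\beta_r$ forces $s > \sqrt{r}$; together with the defining inequalities $s > t\sqrt{p}$ and $s - t\sqrt{p} = \sigma_2(\alpha) \ll 1$ of the semiconvergent, this yields the lower bound $t > \sqrt{r/p}$ on the denominator.

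The contradiction would then come from confronting these facts with the best-approximation theorems \Cref{thm:bb} and \Cref{cor:bbc}: the integer pair $(u, v)$ approximating $\sqrt{p}/\gcd(p,q)$ with the prescribed parity at the level $\sigma_2(\alpha)/\sqrt{q}$ would have to correspond to a (semi)convergent of $-\overline{\omega_p}$ of denominator strictly greater than $t$, but the simultaneous parity and size constraints eliminate every candidate. The main obstacle will be precisely this final step: it requires a careful case analysis tracking how the parities of the numerators and denominators $s_i, t_i$ of consecutive convergents of $-\overline{\omega_p}$ evolve through the continued-fraction recursion $s_{i+1} = u_{i+1} s_i + s_{i-1}$, and exploiting the size bound $t > \sqrt{r/p}$ (enforced by the hypothesis $p < r$, respectively $p < q < r$) to rule out every possibility. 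The analogous argument handles the claim for the remaining subfield in each type by symmetry.
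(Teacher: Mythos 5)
Your ``projection principle'' is sound as far as it goes, and its application to $K_q$ in types (1) and (2) is correct and even pleasant: in those types the integral bases in \Cref{sect:bf} do force the $K_q$-component $x+z\sqrt q$ of any $\beta\in\mc O_K$ to lie in $\mc O_{K_q}$, so you recover directly the statement that the paper imports from \cite[Theorem 2.1]{CLSTZ2019} (\Cref{prp:qi}). But your claim that ``a slightly more delicate parity check'' makes the same mechanism work for a subfield in types (3) and (4) is false. Take the type (3) basis element $\beta = \tfrac14\rb{1+\sqrt p+\sqrt q+\sqrt r}\in\mc O_K$: its $K_q$-projection is $\tfrac14\rb{1+\sqrt q}$, which lies in $\tfrac12\mc O_{K_q}$ but not in $\mc O_{K_q}$, and by symmetry no quadratic subfield of a type (3)/(4) field has automatically integral projections. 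This is why the paper cannot dispose of the second subfield cheaply in these types: \Cref{prp:qi}(ii) is proved by rerunning the entire continued-fraction argument of \Cref{prp:pi34} with $p$ and $q$ swapped, using the hypothesis $q<r$. Your plan has no substitute for this.

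The deeper gap is in your main continued-fraction argument for $K_p$, where your denominator bookkeeping is attached to the wrong element. You compare the approximating pair coming from $\vb{z\sqrt q - w\sqrt r}$ against $\alpha = s+t\omega_p$ itself and assert it ``would have to correspond to a (semi)convergent of denominator strictly greater than $t$''; but total positivity only gives $2w_1\sqrt r < 2x_1 \le s$, i.e.\ roughly $2w_1 < 2t\sqrt{p/r}$, so when $r<4p$ (which the hypotheses $p<r$, resp.\ $p<q<r$, certainly allow) the denominator can land anywhere in the window $(t,2t)$, where no best-approximation statement about $\alpha$ applies, and your proposed size bound $t>\sqrt{r/p}$ (itself overstated: half-integrality of $w_1$ gives only $s>\tfrac12\sqrt r$, and only when $w_1\ne 0$) has no evident force there. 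The paper's proofs of \Cref{prp:pi12,prp:pi34} avoid this entirely via a key lemma your sketch is missing: after normalising $E_1 := 2x_1-2y_1\sqrt p \le \ol\alpha$, one first proves that $2x_1+2y_1\sqrt p$ is \emph{itself} indecomposable in $\mc O_{K_p}^+$ (if it decomposed, the best-upper-bound property would force $2x_1\ge 2x$ and hence $2x_2\le 0$), so it is an upper semiconvergent $\alpha_{2i-1,l}$; the Diophantine comparison $\vb{2\gcd(p,q)z_1-2w_1\sqrt p}<E_1$ is then made against \emph{this} element, with the clean denominator bound $2w_1\le 2y_1$ coming from $2w_1\sqrt r<2x_1\le 2y_1\sqrt p+1$ and $p<r$. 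Then \Cref{cor:bbc} pins the pair down to a multiple $k\alpha_{2i}$ of the single next convergent, and one application of the determinant identity \eqref{eq:cm1} together with the integral-basis parity constraints gives the contradiction --- no tracking of parities through the recursion $s_{i+1}=u_{i+1}s_i+s_{i-1}$ is needed, and no lower bound on $t$ ever enters. The elimination step you yourself flag as the ``main obstacle'' is exactly the part your route cannot complete as set up.
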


We shall prove \Cref{thm:pid} using the \Cref{prp:pi12,prp:pi34,prp:qi}, which deal with the cases separately.
\begin{prp}\label{prp:pi12} 
Let $K$ be a biquadratic field of type (1) or (2), and $p<r$. Then the indecomposable elements in $\mc O_{K_p}^+$ remain indecomposable in $\mc O_K^+$.  
\end{prp}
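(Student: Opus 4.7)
The plan is to argue by contradiction, combining Galois averaging with the Diophantine characterisation of indecomposables from Section 2.2. Suppose $\alpha \in \mc O_{K_p}^+$ is indecomposable in $\mc O_{K_p}^+$ but $\alpha = \beta + \gamma$ for some $\beta, \gamma \in \mc O_K^+$. Let $\sigma_3 \in \Gal(K/\Q)$ denote the nontrivial element fixing $K_p$; since $\sigma_3$ permutes the real embeddings of $K$, we have $\sigma_3(\beta), \sigma_3(\gamma) \in \mc O_K^+$. Set $\beta_0 = \tfrac12(\beta + \sigma_3(\beta)) \in K_p$ and define $\gamma_0$ analogously; then $2\beta_0, 2\gamma_0 \in \mc O_{K_p}^+$ and $\beta_0 + \gamma_0 = \alpha$. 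Expanding $\beta = a + b\sqrt{p} + c\sqrt{q} + d\mu$ in the explicit integral basis of $\mc O_K$ from Section 4.1 (for type 1; type 2 is analogous, with $\tfrac{1+\sqrt{q}}{2}$ replacing $\sqrt{q}$), where $\mu = \tfrac{\sqrt{p}+\sqrt{r}}{2}$, one computes $2\beta_0 = 2a + (2b+d)\sqrt{p}$, so $\beta_0 \in \mc O_{K_p}$ precisely when $d$ is even. The relation $\alpha \in K_p$ forces $c + c' = 0$ and $d + d' = 0$, so the parities of $d$ and $d'$ agree: if both are even, then $\beta_0, \gamma_0 \in \mc O_{K_p}^+$ and $\alpha = \beta_0 + \gamma_0$ is a decomposition in $\mc O_{K_p}^+$, contradicting indecomposability. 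Hence $d$ is odd (and the analogous parity conclusion holds in type 2).

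The next step uses the third quadratic subfield $K_r$. Projecting via $\sigma_4 \in \Gal(K/K_r)$ gives $\beta_{0, r} = a + (d/2)\sqrt{r} \in K_r$ (in type 1, with an analogous formula in type 2), which is totally positive. Positivity yields $a > |d|\sqrt{r}/2 \ge \sqrt{r}/2$ (using $|d| \ge 1$), and similarly $a' > \sqrt{r}/2$ for $\gamma$. Writing $\alpha = s + t\sqrt{p}$, summing yields $s = a + a' > \sqrt{r}$. This already contradicts $\alpha = 1$ (since $\sqrt{r} \ge \sqrt{2} > 1$) and, more generally, all ``small'' indecomposables with $s \le \sqrt{r}$; the analogous projection via $\sigma_2$ to $K_q$ treats the remaining parity case in type 2.

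For $t \ge 1$ and indecomposables with $s > \sqrt r$, $\alpha = \alpha_{i, l}$ corresponds to an upper semiconvergent of $\sqrt{p}$, so $0 < s - t\sqrt{p} < 1$. Write $2\beta_0 = s_1 + t_1\sqrt{p}$ and $2\gamma_0 = s_2 + t_2\sqrt{p}$. A size estimate rules out $t_j \le -1$: if $t_1 \le -1$ then $s_1 - t_1\sqrt{p} \ge 2 + \sqrt{p} > 2$, contradicting $s_1 - t_1\sqrt{p} < 2(s - t\sqrt{p}) < 2$. Hence $t_1, t_2 \ge 1$; WLOG $t_1 \le t_2$. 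The best-upper-bound-of-the-second-kind property (Theorem 3.2(ii)) applied to $(s_1, t_1)$, combined with Corollary 3.3 applied to the auxiliary lower approximation $\tfrac{s_2 - s}{t_2 - t}$ of $\sqrt{p}$, forces $(s_1, t_1) = (s_{i, l-k}, t_{i, l-k})$ and $(s_2, t_2) = (s_{i, l+k}, t_{i, l+k})$ for some $k \ge 0$; the case $l = 0$ ($\alpha$ a convergent) is handled directly by Theorem 3.2(i). The total positivity of $\beta = \beta_0 + c\sqrt{q} + (d/2)\sqrt{r}$ under the four embeddings of $K$ translates into
\[
\tfrac{s_1 + t_1\sqrt{p}}{2} > \left|c\sqrt{q} + \tfrac{d}{2}\sqrt{r}\right|, \qquad \tfrac{s_1 - t_1\sqrt{p}}{2} > \left|c\sqrt{q} - \tfrac{d}{2}\sqrt{r}\right|,
\]
with parallel inequalities for $\gamma_0$ in place of $\beta_0$. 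Summing the four inequalities and using $|x + y| + |x - y| = 2\max(|x|, |y|) \ge |d|\sqrt{r}$ refines the earlier bound to $|d|\sqrt{r} < s - k\,t_{i+1}\sqrt{p}$. The main obstacle lies here: combining this with $s - t\sqrt{p} < 1$, the lower-embedding constraint $|2c - d\sqrt{p}| < 2(s_{i, l+k} - t_{i, l+k}\sqrt{p})/\sqrt{q}$, and the parity restriction that $d$ is odd, one must show no valid $(c, d)$ exists. The condition $p < r$ is essential here, forcing the bound $|d| \le s/\sqrt{r}$ to be too small to admit an odd-denominator rational approximation $2c/d$ of $\sqrt{p}$ meeting the required accuracy, by comparison with the continued-fraction structure of $\sqrt{p}$.
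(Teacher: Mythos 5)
Your opening moves run parallel to the paper's own proof: the averaging $2\beta_0=\beta+\sigma_3(\beta)$ is the same device as the paper's use of $\sigma_2(\beta_i)+\sigma_4(\beta_i)$, and your structural step pinning the two projections to symmetric points $(s_{i,l-k},t_{i,l-k})$ and $(s_{i,l+k},t_{i,l+k})$ on the semiconvergent chain, via \Cref{cor:bbc} applied to the auxiliary fraction $\frac{s_2-s}{t_2-t}$, is a legitimate variant of the paper's argument (which instead shows that the half with smaller conjugate projects to an indecomposable of $\mc O_{K_p}^+$, hence to an upper semiconvergent, using total positivity of the other half). Two small repairs would be needed in that part: your size estimate only excludes $t_j\le -1$, not $t_j=0$ (the case $s_1=1$, $t_1=0$ must be killed separately, e.g.\ by the $K_r$-positivity inequality $\vb{d}\sqrt{r}<1$, or by the parity observation that $t_1=2b+d$ is odd when $d$ is odd in type~(1)); and when $\alpha$ is a convergent, \Cref{cor:bbc} does not apply, so more than a one-line appeal to \Cref{thm:bb}(i) is owed, since after concluding the projections are trivial one must still exclude the nonzero pair $(c,d)$.

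The genuine gap is the endgame. You write that ``one must show no valid $(c,d)$ exists'' and propose that $p<r$ forces $\vb{d}\le s/\sqrt{r}$ to be too small to admit a sufficiently accurate approximation $2c/d$ of $\sqrt{p}$. This mechanism cannot work as stated. First, the relevant fraction is $2\gcd(p,q)c/d$, not $2c/d$: dividing the embedding constraint $\vb{2c\sqrt{q}-d\sqrt{r}}<s_1-t_1\sqrt{p}$ by $\sqrt{q}$ gives $\vb{2\gcd(p,q)c-d\sqrt{p}}<\gcd(p,q)(s_1-t_1\sqrt{p})/\sqrt{q}$, and this is where $p<r$ actually enters, as it is equivalent to $\gcd(p,q)<\sqrt{q}$, which preserves the inequality (the paper also needs $p<r$ to deduce the denominator bound $w_1\le y_1$). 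Second, and decisively, accuracy and size do \emph{not} rule out $(c,d)$: \Cref{cor:bbc} only forces $2\gcd(p,q)c+d\sqrt{p}$ to be an integer multiple $k\alpha_{2i}$ of the next convergent, and such multiples satisfy every approximation-quality and denominator constraint in play. The paper eliminates them by a parity analysis that is entirely absent from your proposal: the integral-basis congruences (e.g.\ in type~(1), $x_1,z_1\in\Z$ together with $\gcd(2x_1,2y_1)=1$ force $y_1,w_1\in\frac12+\Z$) are played off against the determinant identity \eqref{eq:cm1} for consecutive convergents, yielding contradictory parities for $s_{2i-1}+ls_{2i}$ or $t_{2i-1}+lt_{2i}$, with three separate subcases in type~(2). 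That parity step is the crux of the proposition — it is the only place the oddness of $d$ is actually used — so as it stands your proof is incomplete, and the mechanism you substitute for it would fail.
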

\begin{proof}
For a biquadratic field $K$ of these types, we have $\omega_p = \sqrt{p}$, so $\mc O_{K_p} = \Z[\sqrt{p}]$. Let $\beta = x+y\sqrt{p} \in \mc O_{K_p}^+$ be an indecomposable element in $\mc O_{K_p}^+$. We shall exclude the trivial case $\beta = 1$, which is always indecomposable. By taking Galois conjugates, we may assume $y>0$. Note that this implies $\ol{\beta} = x-y\sqrt{p} \le 1$. We also note that $\frac xy$ is a best upper bound of the second kind of $\sqrt{p}$; this follows from the characterisation of indecomposable elements (see \Cref{sect:qi}) and \Cref{thm:bb}. Suppose $\beta$ admits in $\mc O_K^+$ a decomposition
\begin{equation*}
\beta = \beta_1+\beta_2, \qquad  \beta_i = x_i + y_i\sqrt{p} + z_i\sqrt{q} + w_i\sqrt{r} \in \mc O_K^+.
\end{equation*}
Clearly we have $z_2 = -z_1$, and $w_2 = -w_1$. From the shape of the integral basis of $\mc O_K$ given in \Cref{sect:bf}, we see that $x_i,y_i,z_i,w_i\in\frac 12\Z$. We consider the equation
\begin{equation*}
2\ol{\beta} = 2x-2y\sqrt{p} = (2x_1-2y_1\sqrt{p}) + (2x_2-2y_2\sqrt{p}) = \sigma_2(\beta_1+\beta_2)+\sigma_4(\beta_1+\beta_2).
\end{equation*}
Since $\beta_1,\beta_2$ are totally positive, it follows that both $E_1:= 2x_1-2y_1\sqrt{p}$ and $E_2 := 2x_2-2y_2\sqrt{p}$ are positive. Without loss of generality, we may assume $2x_1-2y_1\sqrt{p} \le x-y\sqrt{p}$. Since $\frac xy$ is a best upper bound of the second kind of $\sqrt{p}$, this implies $2y_1 \ge y$, and $2x_1\ge x$. 

Now we claim that $2x_1+2y_1\sqrt{p}$ is indecomposable in $\mc O_{K_p}^+$. Suppose to the contrary that there is a decomposition
\ba
2x_1+2y_1\sqrt{p} = (x_{1a}+y_{1a}\sqrt{p}) + (x_{1b}+y_{1b}\sqrt{p}).
\ea
Since $\frac xy$ is a best upper bound of the second kind of $\sqrt{p}$, and
\ba
x_{1a}-y_{1a}\sqrt{p}, \; x_{1b}-y_{1b}\sqrt{p} < 2x_1-2y_1\sqrt{p} \le x-y\sqrt{p}, 
\ea
we deduce that $y_{1a},y_{1b}\ge y$, and hence $2y_1 \ge 2y$, and $2x_1\ge 2x$. But this implies $2x_2\le 0$, which contradicts to the assumption that $\beta_2$ is totally positive. So the claim is established. From the characterisation of the indecomposable elements in $\mc O_{K_p}^+$, this says $2x_1+2y_1\sqrt{p} = \alpha_{2i-1,l}$ is an upper semiconvergent of $\sqrt{p}$.

Now we find the possible decompositions of $\beta = x+y\sqrt{p}$ in $\mc O_K^+$, that is, find $z_1$ and $w_1$. From the shape of the integral basis of $\mc O_K$ given in \Cref{sect:bf}, we see that
\ber
\item $x_1+z_1, y_1+w_1\in\Z$.
\ee
Meanwhile, $\beta_1$ being totally positive implies
\ber\setcounter{enumi}{1}
\item $\vb{z_1\sqrt{q}} < x_1$, $\vb{w_1\sqrt{r}} < x_1$, and
\item $\vb{2z_1\sqrt{q}-2w_1\sqrt{r}} < 2x_1-2y_1\sqrt{p} = E_1$.
\ee
Using condition (iii), we see that $z_1$ and $w_1$ have the same sign, and we may assume both of them are positive. Rewriting condition (iii) yields the condition
\begin{align*}
\vb{2z_1\sqrt{q}-2w_1\sqrt{r}} < E_1 &\iff \vb{2z_1-2w_1\frac{\sqrt{r}}{\sqrt{q}}} = \vb{2z_1-2w_1\frac{\sqrt{p}}{\gcd(p,q)}} < \frac{E_1}{\sqrt{q}}\\
&\iff \vb{2\gcd(p,q)z_1-2w_1\sqrt{p}} < \frac{\gcd(p,q)E_1}{\sqrt{q}}.
\end{align*}
The assumption $p<r$ implies $\gcd(p,q) < \sqrt{q}$, so we have $|2\gcd(p,q)z_1-2w_1\sqrt{p}|< E_1$. On the other hand, condition (ii) says $2w_1\sqrt{r} < 2x_1 \le 2y_1\sqrt{p}+1$. Since $p<r$, and $y_1,w_1\in\frac 12\Z$, it follows that we have $w_1 \le y_1$. This says $\frac{2\gcd(p,q)z_1}{2w_1}$ is a better Diophantine approximation of the second kind of $\sqrt{p}$ than $\frac{2x_1}{2y_1}$, with $2w_1 \le 2y_1$. If $2x_1+2y_1\sqrt{p} = \alpha_{2i-1}$ is a convergent, then such $z_1, w_1$ cannot exist, and the indecomposability of $\beta$ in $\mc O_K^+$ is established. If $2x_1+2y_1\sqrt{p} = \alpha_{2i-1,l}$ is not a convergent, we use \Cref{cor:bbc} to deduce that $2\gcd(p,q)z_1+2w_1\sqrt{p} = k\alpha_{2i}$ is a multiple of the convergent $\alpha_{2i}$.

Writing $\alpha_j = s_j+t_j\sqrt{p}$ for the convergents of $\sqrt{p}$, we have
\begin{equation}\label{eq:pi1z}
2\gcd(p,q)z_1+2w_1\sqrt{p} = k\alpha_{2i} = ks_{2i}+kt_{2i}\sqrt{p},
\end{equation}
and
\ba
2x_1+2y_1\sqrt{p} = \alpha_{2i-1,l} = (s_{2i-1}+ls_{2i}) + (t_{2i-1}+lt_{2i})\sqrt{p}.
\ea

Suppose the biquadratic field $K$ is of type (1). From the shape of the integral basis of $\mc O_K$ given in \Cref{sect:bf}, we see that $x_1,z_1\in\Z$. Since we have $\gcd(2x_1,2y_1) = 1$, it follows that $y_1\in\frac 12+\Z$, and thus $w_1\in\frac 12+\Z$ by condition (i). It follows that $2w_1 = kt_{2i}$ is odd, and $2\gcd(p,q)z_1 = ks_{2i}$ is even. This implies $s_{2i}$ is even. Using \eqref{eq:cm1}, we deduce that $s_{2i-1}$ is odd. But then $2x_1 = s_{2i-1}+ls_{2i}$ is odd, a contradiction. So $\beta$ is indecomposable in $\mc O_K^+$ in this case.

Now suppose the biquadratic field $K$ is of type (2). Since $\gcd(2x_1,2y_1)=1$, it follows that $x_1,y_1$ cannot both be integers. From condition (i) we see that there are three possibilities.
\ber
\item Suppose $x_1,z_1\in\Z$, and $y_1,w_1\in\frac 12+\Z$. Then the argument above also applies, so $\beta$ is indecomposable in $\mc O_K^+$.
\item Suppose $x_1,z_1\in\frac 12+\Z$, and $y_1,w_1\in\Z$. Since $\gcd(p,q)$ is odd, it follows from \eqref{eq:pi1z} that $s_{2i}$ is odd, and $t_{2i}$ is even. Using \eqref{eq:cm1} we deduce that $t_{2i-1}$ is odd. This implies $2y_1 = t_{2i-1}+lt_{2i}$ is odd, a contradiction. So $\beta$ is indecomposable in $\mc O_K^+$.
\item Suppose $x_1,y_1,z_1,w_1\in\frac 12+\Z$. Since $\gcd(p,q)$ is odd, it follows from \eqref{eq:pi1z} that both $s_{2i}, t_{2i}$ are odd. Using \eqref{eq:cm1}, we see that exactly one of $s_{2i-1}, t_{2i-1}$ is even. But then $2x_1 = s_{2i-1}+ls_{2i}$ and $2y_1 = t_{2i-1}+lt_{2i}$ cannot both be odd, a contradiction. So $\beta$ is indecomposable in $\mc O_K^+$.
\ee
This finishes the proof for biquadratic fields of type (2).
\end{proof}

\begin{prp}\label{prp:pi34} 
Let $K$ be a biquadratic field of type (3) or (4), and $p<r$. Then the indecomposable elements in $\mc O_{K_p}^+$ remain indecomposable in $\mc O_K^+$.  
\end{prp}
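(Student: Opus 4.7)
The proof follows the strategy of \Cref{prp:pi12}, with the added subtlety that the integral bases of types (3) and (4) place the coordinates $(x_i, y_i, z_i, w_i)$ of $\beta_i$ in $\frac 14\Z$ rather than $\frac 12\Z$. Writing a hypothetical decomposition $\beta = \beta_1 + \beta_2$ with $\beta_i = x_i + y_i\sqrt{p} + z_i\sqrt{q} + w_i\sqrt{r} \in \mc O_K^+$, we still have $z_2 = -z_1$, $w_2 = -w_1$, and the Galois averages $E_i := 2x_i - 2y_i\sqrt{p} = \sigma_2(\beta_i) + \sigma_4(\beta_i)$ are positive with $E_1 + E_2 = 2\bar\beta$, so WLOG $E_1 \le \bar\beta$.

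The key observation is that $2x_1 \pm 2y_1\sqrt{p} \in \mc O_K \cap K_p = \mc O_{K_p}$ are totally positive; moreover, since $\gcd(p,q)$ is odd in types (3) and (4), a direct check using the bases of \Cref{sect:bf} shows that $2\gcd(p,q)z_1 + 2w_1\sqrt{p}$ also lies in $\mc O_{K_p}$. The indecomposability argument from \Cref{prp:pi12} then carries over: assuming $2x_1 + 2y_1\sqrt{p} = \delta + \delta'$ in $\mc O_{K_p}^+$, comparing conjugates forces one of $\delta, \delta'$ to give a better upper bound of the second kind of $\sqrt{p}$ than $\frac xy$, contradicting the indecomposability of $\beta$. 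Hence $2x_1 + 2y_1\sqrt{p}$ is indecomposable in $\mc O_{K_p}^+$, and in the $\omega_p$-coordinates it equals an upper semiconvergent $\alpha_{2i-1, l}$ of $-\bar{\omega_p} = (\sqrt{p}-1)/2$.

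By total positivity we may assume $z_1, w_1 > 0$, and the condition $\vb{2z_1\sqrt{q} - 2w_1\sqrt{r}} < E_1$ combined with $p<r$ (equivalently $\gcd(p,q) < \sqrt{q}$) gives $\vb{2\gcd(p,q)z_1 - 2w_1\sqrt{p}} < E_1$. The inequality $2w_1\sqrt{r} < 2x_1 \le 2y_1\sqrt{p} + 1$ then forces $2w_1 \le 2y_1$; here we use that $r \ge 13$ in types (3) and (4), so $\sqrt{r}/2 > 1$ exceeds the half-integer gap between $2w_1$ and $2y_1$. If $\alpha_{2i-1, l}$ is a convergent, we are done by \Cref{thm:bb}; otherwise \Cref{cor:bbc}, applied to $\alpha = (\sqrt{p}-1)/2$ in the $\omega_p$-coordinates, yields $2\gcd(p,q)z_1 + 2w_1\sqrt{p} = k\alpha_{2i}$ for some positive integer $k$.

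The proof concludes with a case-by-case parity analysis. Expanding $\alpha_{2i-1, l} = (s_{2i-1, l} + t_{2i-1, l}/2) + (t_{2i-1, l}/2)\sqrt{p}$ and $k\alpha_{2i} = (ks_{2i} + kt_{2i}/2) + (kt_{2i}/2)\sqrt{p}$, and matching with the coordinates of $\beta_1$ in the integral basis (treating types (3) and (4) separately) yields congruence conditions on $(s_{2i-1}, t_{2i-1}, s_{2i}, t_{2i})$ modulo $2$ and modulo $\gcd(p,q)$, which we then contradict using the determinant relation $s_{2i-1}t_{2i} - s_{2i}t_{2i-1} = -1$. The main obstacle is bookkeeping: the finer $\frac 14\Z$-structure of the coordinates multiplies the number of parity subcases compared to \Cref{prp:pi12}, and the differing sign in the last basis element between types (3) and (4) means each type must be treated separately, though each individual subcase remains a routine finite check.
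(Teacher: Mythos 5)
Your proposal is correct and follows essentially the same route as the paper's proof: pass to the Galois average to show that $2x_1+2y_1\sqrt p$ is an upper semiconvergent, use $p<r$ (i.e. $\gcd(p,q)<\sqrt q$) and the denominator bound $2w_1\le 2y_1$ to invoke \Cref{cor:bbc} and force $2\gcd(p,q)z_1+2w_1\sqrt p$ to be a multiple of the next convergent, then conclude by the parity analysis against \eqref{eq:cm1}, exactly as the paper does (the paper likewise carries out only the type~(3) subcases and treats type~(4) analogously). Two immaterial slips: the best-upper-bound statements should refer throughout to $-\ol{\omega_p}=\frac{-1+\sqrt p}2$ rather than $\sqrt p$ (as your own invocation of \Cref{cor:bbc} correctly does), and at even index \eqref{eq:cm1} gives $s_{2i-1}t_{2i}-s_{2i}t_{2i-1}=+1$, not $-1$, though only its oddness is used.
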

\begin{proof}
For a biquadratic field $K$ of these types, we have $\omega_p = \frac{1+\sqrt{p}}2$, so $\mc O_{K_p} = \Z[\frac{1+\sqrt{p}}2]$. Let $\beta = x+y\frac{1+\sqrt{p}}2$ be an indecomposable element in $\mc O_{K_p}^+$. Again we shall exclude the trivial case $\beta = 1$. By taking Galois conjugates, we may assume $y>0$.  We also note that $\frac xy$ is a best upper bound of the second kind of $-\ol\omega_p = \frac{-1+\sqrt{p}}2$; this follows from the characterisation of indecomposable elements (see \Cref{sect:qi}) and \Cref{thm:bb}. Note that this implies $\ol\beta = x+y\frac{1-\sqrt{p}}2 \le 1$. Suppose $\beta$ admits in $\mc O_K^+$ a decomposition
\begin{equation*}
\beta = \beta_1+\beta_2, \qquad  \beta_i = x_i + y_i\frac{1+\sqrt{p}}2 + z_i\sqrt{q} + w_i\sqrt{r} \in \mc O_K^+.
\end{equation*}
Again we have $z_2 = -z_1$, and $w_2 = -w_1$. From the shape of the integral basis of $\mc O_K$ given in \Cref{sect:bf}, we see that $x_i, y_i \in \frac 12\Z$, and $z_i,w_i\in\frac 14\Z$. We consider the equation
\ba
2\ol\beta &= 2x+2y\frac{1-\sqrt{p}}2 = \rb{2x_1+2y_1\frac{1-\sqrt{p}}2} + \rb{2x_2+2y_2\frac{1-\sqrt{p}}2} = \sigma_2(\beta_1+\beta_2) + \sigma_4(\beta_1+\beta_2).
\ea
Since $\beta_1,\beta_2$ are totally positive, it follows that both $E_1 := 2x_1+2y_1\frac{1-\sqrt{p}}2$ and $E_2 := 2x_2+2y_2\frac{1-\sqrt{p}}2$ are positive. Without loss of generality, we may assume $2x_1+2y_1\frac{1-\sqrt{p}}2 \le x+y\frac{1-\sqrt{p}}2$. Since $\frac xy$ is a best upper bound of the second kind of $\frac{-1+\sqrt{p}}2$, this implies $2y_1\ge y$, and $2x_1\ge x$. Using the same argument as above for $K$ of types (1) and (2), we conclude that $2x_1+2y_1\frac{1+\sqrt{p}}2$ is indecomposable in $\mc O_{K_p}^+$. This says $2x_1+2y_1\frac{1+\sqrt{p}}2 = \alpha_{2i-1,l}$ is an upper semiconvergent of $\frac{-1+\sqrt{p}}2$. 

Now we find the possible decompositions of $\beta = x+y\frac{1+\sqrt{p}}2$ in $\mc O_K^+$, that is, find $z_1$ and $w_1$. Total positivity of $\beta_1$ implies
\ber
\item $\vb{z_1\sqrt{q}} < x_1+\frac{y_1}2$, $\vb{w_1\sqrt{r}} < x_1+\frac{y_1}2$, and
\item $\vb{2z_1\sqrt{q}-2w_1\sqrt{r}} < 2x_1+2y_1\frac{1-\sqrt{p}}2 = E_1$.
\ee
Using condition (ii), we see that $z_1$ and $w_1$ have the same sign, and we may assume both of them are positive. Rewriting condition (ii) yields the condition
\begin{multline*}
\vb{2z_1\sqrt{q}-2w_1\sqrt{r}} < E_1 \iff \vb{2z_1-2w_1\frac{\sqrt{r}}{\sqrt{q}}} = \vb{2z_1-2w_1\frac{\sqrt{p}}{\gcd(p,q)}} < \frac{E_1}{\sqrt{q}}\\
\iff \vb{2\gcd(p,q)z_1-2w_1\sqrt{p}} = \vb{(2\gcd(p,q)z_1-2w_1)+4w_1\frac{1-\sqrt{p}}2} < \frac{\gcd(p,q)E_1}{\sqrt{q}}.
\end{multline*}
The assumption $p<r$ implies $\gcd(p,q)<\sqrt{q}$, so we have $|(2\gcd(p,q)z_1-2w_1)+4w_1\frac{1-\sqrt{p}}2| < E_1$. On the other hand, using condition (i), we have
\ba
2w_1\sqrt{r} < 2x_1+y_1 = \rb{2x_1+2y_1\frac{1-\sqrt{p}}2} + y_1\sqrt{p} \le y_1\sqrt{p}+1.
\ea
Since $p<r$, and $y_1,2w_1\in\frac 12\Z$, it follows that we have $2w_1\le y_1$. Noting that $2\gcd(p,q)z_1-2w_1$ is always an integer, we see that $\frac{2\gcd(p,q)z_1-2w_1}{4w_1}$ is a better Diophantine approximation of the second kind of $\frac{-1+\sqrt{p}}2$ than $\frac{2x_1}{2y_1}$, with $2w_1\le 2y_1$. If $2x_1+2y_1\frac{1+\sqrt{p}}2 = \alpha_{2i-1}$ is a convergent, then such $z_1,w_1$ cannot exist, and the indecomposability of $\beta$ in $\mc O_K^+$ is established. If $2x_1+2y_1\frac{1+\sqrt{p}}2 = \alpha_{2i-1,l}$ is not a convergent, we use \Cref{cor:bbc} to deduce that $(2\gcd(p,q)z_1-2w_1)+4w_1\frac{1+\sqrt{p}}2 = k\alpha_{2i}$ is a multiple of the convergent $\alpha_{2i}$.

Writing $\alpha_j = s_j+t_j\frac{1+\sqrt{p}}2$ for the convergents of $\frac{-1+\sqrt{p}}2$, we have
\begin{equation}\label{eq:pi3z} 
(2\gcd(p,q)z_1-2w_1)+4w_1\frac{1+\sqrt{p}}2 = k\alpha_{2i} = ks_{2i}+kt_{2i}\frac{1+\sqrt{p}}2,
\end{equation}
and
\ba
2x_1+2y_1\frac{1+\sqrt{p}}2 = \alpha_{2i-1,l} = (s_{2i-1}+ls_{2i}) + (t_{2i-1}+lt_{2i})\frac{1+\sqrt{p}}2.
\ea

Suppose the biquadratic field $K$ is of type (3). Then we have $\gcd(p,q) \equiv 1\pmod{4}$. Since $\gcd(2x_1,2y_1)=1$, it follows that $x_1,y_1$ cannot both be integers, so there are three possibilities.
\ber
\item Suppose $x_1\in\Z$, $y_1\in\frac 12+\Z$. In this case, $\beta_1\in\mc O_K$ implies $z_1,w_1\in\frac 14+\frac 12\Z$, and $z_1-w_1\in\Z$. Using \eqref{eq:pi3z} and that $\gcd(p,q) \equiv 1\pmod{4}$, we deduce that $2\gcd(p,q)z_1-2w_1 = ks_{2i}$ is even, and $4w_1 = kt_{2i}$ is odd. This implies $t_{2i}$ is odd, and $s_{2i}$ is even. Using \eqref{eq:cm1}, we deduce that $s_{2i-1}$ is odd. This implies $2x_1=s_{2i-1}+ls_{2i}$ is odd, a contradiction. So $\beta$ is indecomposable in $\mc O_K^+$.
\item Suppose $x_1\in\frac 12\Z$, $y_1\in\Z$. In this case, $\beta_1\in\mc O_K$ implies $z_1,w_1\in\frac 12\Z$, and $z_1-w_1\in\frac 12+\Z$. Using \eqref{eq:pi3z}, we deduce that $2\gcd(p,q)z_1-2w_1 = ks_{2i}$ is odd, and $4w_1 = kt_{2i}$ is even. This implies $s_{2i}$ is odd, and $t_{2i}$ is even. Using \eqref{eq:cm1}, we deduce that $t_{2i-1}$ is odd. This implies $2y_1 = t_{2i-1}+lt_{2i}$ is odd, a contradiction. So $\beta$ is indecomposable in $\mc O_K^+$.
\item Suppose $x_1,y_1\in\frac 12+\Z$. In this case, $\beta_1\in\mc O_K$ implies $z_1,w_1\in\frac 14+\frac 12\Z$, and $z_1-w_1\in\frac 12+\Z$. Using \eqref{eq:pi3z} and that $\gcd(p,q) \equiv 1\pmod{4}$, we deduce that both $s_{2i},t_{2i}$ are odd. Using \eqref{eq:cm1}, we deduce that exactly one of $s_{2i-1},t_{2i-1}$ is even. But then $2x_1=s_{2i-1}+ls_{2i}$ and $2y_1=t_{2i-1}+lt_{2i}$ cannot both be odd, a contradiction. So $\beta$ is indecomposable in $\mc O_K^+$.
\ee

Finally, suppose the biquadratic field $K$ is of type (4). Then we have $\gcd(p,q) \equiv 3\pmod{4}$. Again, $x_1,y_1$ cannot both be integers, and there are three subcases, which can be treated completely analogously as above.
\end{proof}

\begin{prp}\label{prp:qi} 
\ber
\item Suppose $K$ is a real biquadratic field of type $(1)$ or $(2)$. Then the indecomposable elements in $\mc O_{K_q}^+$ remain indecomposable in $\mc O_K^+$.
\item Suppose $K$ is a real biquadratic field of type $(3)$ or $(4)$, and $q<r$. Then the indecomposable elements in $\mc O_{K_q}^+$ remain indecomposable in $\mc O_K^+$.
\ee
\end{prp}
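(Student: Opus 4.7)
The plan is to handle parts (i) and (ii) by adapting the proofs of \Cref{prp:pi12,prp:pi34} for $K_p$, with the two cases requiring genuinely different endings.

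For part (i), let $\beta \in \mc O_{K_q}^+$ be indecomposable. After a Galois action in $K_q$ I write $\beta = X + Z\omega_q$ with $Z>0$, so that $(X,Z)$ corresponds to a best upper bound of the second kind of $-\ol{\omega_q}$ via \Cref{thm:bb} together with the description in \Cref{sect:qi}. Suppose for contradiction that $\beta = \beta_1 + \beta_2$ in $\mc O_K^+$ with $\beta_i \ne 0$, and expand $\beta_i = x_i + y_i\sqrt{p} + z_i\sqrt{q} + w_i\sqrt{r}$. Then $\sigma_3(\beta_i) + \sigma_4(\beta_i) = 2x_i - 2z_i\sqrt{q}$ is positive, and after relabeling I may assume $E_1 := 2x_1 - 2z_1\sqrt{q} \le \ol\beta$. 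Mirroring the argument of \Cref{prp:pi12} (respectively \Cref{prp:pi34}) with $q$ replacing $p$, up to and including the step that establishes indecomposability of the ``$K_p$-trace,'' I would conclude that $2x_1 + 2z_1\sqrt{q}$ is indecomposable in $\mc O_{K_q}^+$; this step uses only the best-upper-bound property of $(X,Z)$ and the positivity of the trace of $\beta_2$, and does not require any hypothesis relating $p,q,r$. The key new ingredient is that the integral bases of $\mc O_K$ given in \Cref{sect:bf} force $x_1 = a_1$, $z_1 = c_1$ with $a_1,c_1 \in \Z$ in type (1), and $x_1 = a_1 + c_1/2$, $z_1 = c_1/2$ with $a_1,c_1 \in \Z$ in type (2). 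In both types a direct expansion yields $2x_1 + 2z_1\sqrt{q} = 2a_1 + 2c_1\omega_q$ in the natural basis $\{1,\omega_q\}$ of $\mc O_{K_q}$. Since $(a_1,c_1) \ne (0,0)$ (else $\beta_1$ would have vanishing trace, contradicting total positivity), both $\{1,\omega_q\}$-coefficients are even and $2x_1 + 2z_1\sqrt{q}$ decomposes nontrivially as $(a_1 + c_1\omega_q) + (a_1 + c_1\omega_q)$ in $\mc O_{K_q}^+$, contradicting the previous step.

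For part (ii), the argument is simply \Cref{prp:pi34} applied with $p$ and $q$ interchanged. In types (3) and (4) one has $p \equiv q \equiv 1 \pmod 4$, and the type is determined only by $\gcd(p,q) \pmod 4$, which is invariant under the swap. The integral basis is already symmetric in $p,q$ in type (3); in type (4), the basis element $(1-\sqrt{p}+\sqrt{q}+\sqrt{r})/4$ may be replaced by the equally valid integral element $(1-\sqrt{q}+\sqrt{p}+\sqrt{r})/4 = (1-\sqrt{p}+\sqrt{q}+\sqrt{r})/4 + (\sqrt{p}-\sqrt{q})/2$ to make the interchange manifest. The proof of \Cref{prp:pi34} then carries over verbatim, with the hypothesis $q<r$ playing the role previously played by $p<r$ in the final Diophantine-approximation contradiction obtained via \Cref{cor:bbc}.

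The main technical point is verifying that the subclaim in part (i)---that $2x_1 + 2z_1\sqrt{q}$ is indecomposable in $\mc O_{K_q}^+$---goes through uniformly for types (1) and (2) without any further hypothesis. This amounts to running, with minor notational changes, the short paragraph in the proof of \Cref{prp:pi12} where a hypothetical decomposition of $2x_1 + 2y_1\sqrt{p}$ in $\mc O_{K_p}^+$ is ruled out by forcing each piece to have $\omega$-coefficient strictly exceeding that of $\beta$, leading to $x_2 \le 0$ and contradicting positivity of the trace of $\beta_2$. Once this subclaim is in hand, the parity observation replaces the Diophantine analysis of $y_1,w_1$ that was essential in \Cref{prp:pi12,prp:pi34}, which is why part (i) needs no assumption on $r$ while part (ii) does.
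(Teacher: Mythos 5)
Your proof is correct, but for part (i) it takes a genuinely different route from the paper. The paper disposes of part (i) by citation: it is contained in \cite[Theorem~2.1]{CLSTZ2019}. Your part (ii), by contrast, is exactly the paper's argument -- the paper also obtains it by swapping $p$ and $q$ in \Cref{prp:pi34}, using that $p,q,r$ are interchangeable in types (3) and (4); your extra check that the type-(4) integral basis can be symmetrised via $\frac{1-\sqrt{q}+\sqrt{p}+\sqrt{r}}{4} = \frac{1-\sqrt{p}+\sqrt{q}+\sqrt{r}}{4} + \bigl(\frac{1+\sqrt{p}}{2}-\frac{1+\sqrt{q}}{2}\bigr)$ is a worthwhile detail the paper leaves implicit. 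The substance of your part (i) is the observation that in types (1) and (2) the relative trace satisfies $\Tr_{K/K_q}(\mc O_K) \sbe 2\mc O_{K_q}$ (indeed $\Tr_{K/K_q}(\beta_1) = 2a_1 + 2c_1\omega_q$ in both types, as your basis computation shows), so the trace of any summand is twice the totally positive integer $a_1+c_1\omega_q$ and hence manifestly decomposable, which collides with the \Cref{prp:pi12}-style subclaim that this trace must be indecomposable. This parity obstruction cleanly replaces the continued-fraction analysis of $z_1,w_1$ that consumed the bulk of \Cref{prp:pi12,prp:pi34} and that required $p<r$ there; your remark that this is precisely why part (i) needs no hypothesis relating $p,q,r$ is accurate, and the resulting argument makes the paper self-contained where it otherwise outsources to \cite{CLSTZ2019}. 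Two small points to tighten: you should say explicitly that $a_1+c_1\omega_q$ is totally positive (it is half of $\Tr_{K/K_q}(\beta_1)$, whose two real values $2x_1\pm 2z_1\sqrt{q}$ are positive by total positivity of $\beta_1$) and nonzero, and you should exclude the trivial case $\beta = 1$ (where $Z>0$ is unavailable), as the paper does in \Cref{prp:pi12}; also, in type (2) the final contradiction of the subclaim is best read in the $\cb{1,\sqrt{q}}$-coordinates (the rational coordinate of $\beta_2$ must be positive), since $\Tr_{K_q/\Q}(\omega_q) = 1 \ne 0$ there -- both are routine adjustments that do not affect the argument.
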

\begin{proof}
The first statement is contained in \cite[Theorem 2.1]{CLSTZ2019}. For biquadratic fields $K$ of type $(3)$ and $(4)$, the numbers $p,q,r$ are all interchangeable. By swapping $p$ and $q$, the second statement follows from \Cref{prp:pi34}.
\end{proof}

\begin{proof}[Proof of \Cref{thm:pid}]
\Cref{prp:pi12,prp:pi34} show that indecomposable elements in $\mc O_{K_p}^+$ remain indecomposable in $\mc O_K^+$, and \Cref{prp:qi} shows that indecomposable elements in $\mc O_{K_q}^+$ remain indecomposable in $\mc O_K^+$.
\end{proof}

For cases not described by \Cref{thm:pid}, indecomposable elements in a quadratic subfield need not stay indecomposable in $\mc O_K^+$. We give several examples.
\begin{expl}
Let $p=14$, $q=91$, $r=26$. Then $K = \Q(\sqrt{p},\sqrt{q})$ is of type (1). Then $26+5\sqrt{26} \in \mc O_{K_r}^+$ is indecomposable in $\mc O_{K_r}^+$, but admits a decomposition
\ba
26+5\sqrt{26} = \rb{13+\frac 52\sqrt{14} + \sqrt{91} + \frac 52\sqrt{26}} + \rb{13-\frac 52\sqrt{14} - \sqrt{91} + \frac 52\sqrt{26}}
\ea
in $\mc O_K^+$.
\end{expl}
\begin{expl}
Let $p=5$, $q=13$, $r=65$. Then $K = \Q(\sqrt{p},\sqrt{q})$ is of type (3). Then $\frac{25+3\sqrt{65}}2 \in \mc O_{K_r}^+$ is indecomposable $\mc O_{K_r}^+$, but admits a decomposition
\ba
\frac{25+3\sqrt{65}}2 = \frac{25+5\sqrt{5}+3\sqrt{13}+3\sqrt{65}}{4} + \frac{25-5\sqrt{5}-3\sqrt{13}+3\sqrt{65}}{4}
\ea
in $\mc O_K^+$.
\end{expl}

\section{Indecomposable elements in some families of real biquadratic fields}\label{sect:if} 

Here we restrict our attention to some specific one-parameter families of real biquadratic fields $K$, and give a complete characterisation of the indecomposable elements in $\mc O_K^+$. 

First we consider the one-parameter family of biquadratic fields described in \Cref{thm:f1}. Let $n\ge 6$ be an integer such that
\ba
p = (2n-1)(2n+1), \quad q = (2n-1)(2n+3), \quad r = \frac{pq}{\gcd(p,q)^2} = (2n+1)(2n+3),
\ea
are squarefree integers. Then $K = \Q(\sqrt{p},\sqrt{q})$ is a biquadratic field of type (2). We have continued fractions
\ba
\sqrt{p} = [2n-1;\ol{1,4n-2}], \quad \frac{-1+\sqrt{q}}2 = [n-1;\ol{1,2n-1}], \quad \sqrt{r} = [2n+1;\ol{1,4n+2}].
\ea
Using results from \Cref{sect:qi}, it follows that the fundamental units of the quadratic subfields $K_p, K_q, K_r$ are given respectively by
\ba
\varepsilon_p = 2n+\sqrt{p}, \quad \varepsilon_q = \frac{2n+1+\sqrt{q}}2, \quad \varepsilon_r = (2n+2)+\sqrt{r}.
\ea
Note that all the fundamental units above are totally positive, and by \eqref{eq:qil} we have $\iota(K_p) = \iota(K_q) = \iota(K_r)=1$, so the totally positive units are the only indecomposable elements in $\mc O_{K_p}^+, \mc O_{K_q}^+$, and $\mc O_{K_r}^+$. 

Now we find a system of generators for the group of totally positive units $\mc O_K^{\times,+}$. We shall use some results from \cite{Kubota1956}. Let $F$ be a real quadratic field, and $\varepsilon \in \mc O_F^{\times,+}$ be a totally positive unit. Then there exist a unique, squarefree rational integer $\delta = \delta_F(\varepsilon)$ such that $\delta \varepsilon \in F^2$. In fact, we may take $\delta$ to be the squarefree part of $\Tr_{F/\Q}(\varepsilon+1)$. Observing that $\varepsilon\Tr_{F/\Q}(\varepsilon+1) = (\varepsilon+1)^2$, it follows that $\sqrt{\delta \varepsilon} \in \mc O_F^+$. Now let $K$ be a real biquadratic field, with quadratic subfields $K_1,K_2,K_3$, and with respective fundamental units $\varepsilon_1,\varepsilon_2,\varepsilon_3$. Suppose $\varepsilon_1,\varepsilon_2,\varepsilon_3$ are all totally positive. Then \cite[Hilfssatz~11]{Kubota1956} says that for $ i_1,i_2,i_3\in\cb{0,1}$, we have
\ba
\varepsilon_1^{i_1}\varepsilon_2^{i_2}\varepsilon_3^{i_3} \in K^2 \quad \text{ if and only if } \quad \delta_{K_1}(\varepsilon_1)^{i_1} \delta_{K_2}(\varepsilon_2)^{i_2}\delta_{K_3}(\varepsilon_3)^{i_3} \in K^2.
\ea

For our family of biquadratic fields, we have
\ba
\delta_p = \delta_{K_p}(\varepsilon_p) = 4n+2, \quad \delta_q = \delta_{K_q}(\varepsilon_q) = 2n+3, \quad \delta_r = \delta_{K_r}(\varepsilon_r) = 4n+6.
\ea
Using the assumption that $p,q,r$ are squarefree, it is straightforward to verify, using the criterion above, that a system of fundamental units for $\mc O_K^\times$ is given by $\varepsilon_p, \varepsilon_q, \sqrt{\varepsilon_p\varepsilon_r}$. On the other hand, we note that $\sqrt{\delta_p\varepsilon_p}\sqrt{\delta_r\varepsilon_r} = \sqrt{\delta_p\delta_r}\sqrt{\varepsilon_p\varepsilon_r} \in \mc O_K^+$ is totally positive, while $\delta_p\delta_r = 4(2n+1)(2n+3)$ is not a rational square, so $\sqrt{\varepsilon_p\varepsilon_r}$ is not totally positive. We thus conclude that the group $\mc O_K^{\times,+}$ of totally positive units is generated by $\varepsilon_p,\varepsilon_q,\varepsilon_r$. We have $\mc O_K^\times/\mc O_K^{\times,+} = \cb{\pm 1, \pm\sqrt{\varepsilon_p\varepsilon_r}}$ and in particular $[\mc O_K^\times : \mc O_K^{\times,+}] = 4$. We will make use of this fact in \Cref{sect:fmr}. 

Now we compute a complete set of indecomposable elements of $\mc O_K^+$ modulo totally positive units, following the strategy outlined in \cite{KT2023}. We consider the embedding 
\ba
\sigma : K\to\R^4, \quad \alpha \mapsto (\sigma_1(\alpha), \sigma_2(\alpha), \sigma_3(\alpha), \sigma_4(\alpha))
\ea
of $K$ into the Minkowski space. The images of the totally positive elements in $K$ then lie inside the positive octant $\R^{4,+} = \cbm{(x_1,\ldots,x_4)}{x_i > 0 \text{ for } i=1,\ldots,4}$. We further consider a fundamental domain $\mc F$ for the action of multiplication by (the images of) totally positive units $\varepsilon \in \mc O_K^{\times,+}$ on $\R^4$. By \cite[Theorem 1]{DF2014}, the fundamental domain $\mc F$ is covered by the simplicial cones
\ba
C_{xyz} := \R_{\ge 0} \pb{1,\varepsilon_x, \varepsilon_x\varepsilon_y, \varepsilon_x\varepsilon_y\varepsilon_z},
\ea
where $\cb{x,y,z}$ is a permutation of $\cb{p,q,r}$. To verify that there is no overlap between the interiors of the cones, it suffices to check that the expression
\[
\sgn(\sigma) \cdot \sgn\rb{\det\rb{1,\varepsilon_{\sigma(p)},\varepsilon_{\sigma(p)}\varepsilon_{\sigma(q)},\varepsilon_{\sigma(p)}\varepsilon_{\sigma(q)}\varepsilon_{\sigma(r)}}} 
\]
has the same sign for all permutations $\sigma$ of $\cb{p,q,r}$ (see \cite{Colmez1988,DF2014}). Given the explicit characterisations of the units $\varepsilon_p,\varepsilon_q,\varepsilon_r$ above, this is straightforward computation.

To ease the computations below, we translate the cone $C_{xyz}$ by $\varepsilon_x^{-1}$, and consider instead the cones
\ba
C'_{xyz} := \R_{\ge 0} \pb{\varepsilon_x^{-1}, 1, \varepsilon_y, \varepsilon_y\varepsilon_z}.
\ea
Note that all the indecomposable elements lying in the cone $C'_{xyz}$ actually lie in the parallelepiped
\ba
P_{xyz} := [0,1]\pb{\varepsilon_x^{-1}, 1, \varepsilon_y, \varepsilon_y\varepsilon_z}.
\ea
Moreover, except for the units $\varepsilon_x^{-1}, 1, \varepsilon_y, \varepsilon_y\varepsilon_z$, all indecomposable elements lying in the parallelepiped $P_{xyz}$ has all the coordinates (with respect to the basis $\varepsilon_x^{-1}, 1, \varepsilon_y, \varepsilon_y\varepsilon_z$) strictly less than $1$. 

For the computations below, we also need explicit expressions for the following units:
\ba
\varepsilon_p^{-1} &= 2n-\sqrt{p}, \quad \varepsilon_q^{-1} = \rb{n+\frac 12} - \frac 12\sqrt{q}, \quad \varepsilon_r^{-1} = (2n+2)-\sqrt{r},\\
\varepsilon_p\varepsilon_q &= (2n^2+n) + \rb{n+\frac 12}\sqrt{p} + n\sqrt{q} + \rb{n-\frac 12}\sqrt{r},\\
\varepsilon_p\varepsilon_r &= (4n^2+4n) + (2n+2)\sqrt{p} + (2n+1)\sqrt{q} + 2n\sqrt{r},\\
\varepsilon_q\varepsilon_r &= (2n^2+3n+1) + \rb{n+\frac 32}\sqrt{p} + (n+1)\sqrt{q} + \rb{n+\frac 12}\sqrt{r}.
\ea
Note that the biquadratic field $K$ is of type (2) as per the classification in \Cref{sect:bf}. Thus we have
\begin{equation}\label{eq:fbib} 
x+y\sqrt{p}+z\sqrt{q}+w\sqrt{r} \in \mc O_K \iff x,y,z,w\in\frac 12\Z, \quad x+z, y+w\in\Z.
\end{equation}

For the computations, it is also convenient to have good rational approximations of $\sqrt{p},\sqrt{q},\sqrt{r}$, coming from the continued fractions given above:
\begin{equation}\label{eq:re} 
\textstyle \frac{8n^2-2n-1}{4n-1}<\sqrt{p}<2n, \quad \frac{2n^2+n-1}{n}<\sqrt{q}<\frac{4n^2+4n-1}{2n+1}, \quad  \frac{8n^2+14n+5}{4n+3}<\sqrt{r}<2n+2.
\end{equation}

\subsection{The parallelepiped $P_{pqr}$}

We first consider the parallelepiped $P_{pqr}$ spanned by $\varepsilon_p^{-1},1,\varepsilon_q,\varepsilon_q\varepsilon_r$. Let 
\ba
\alpha = a\varepsilon_p^{-1} + b + c\varepsilon_q + d\varepsilon_q\varepsilon_r  = x+y\sqrt{p}+z\sqrt{q}+w\sqrt{r} \in \mc O_K, \quad 0 \le a,b,c,d < 1
\ea
be an integral point in the semi-open parallelepiped. Since $\varepsilon_q\varepsilon_r$ is the only basis element of the parallelepiped with a nonzero coefficient in $\sqrt{r}$, we use \eqref{eq:fbib} and the condition $0\le d < 1$ to conclude that $d=\frac{t}{2n+1}$ for some $t\in\cb{0,\ldots,2n}$. Also, \eqref{eq:fbib} implies $a-d, c+d, b-a \in\Z$. The condition $0\le a,b,c < 1$ then gives
\ba
(a,b,c,d) = \begin{cases} (0,0,0,0), \text{ or }\\ (\frac{t}{2n+1},\frac{t}{2n+1},1-\frac{t}{2n+1},\frac{t}{2n+1}) \quad (1\le t \le 2n).\end{cases}
\ea 
Define
\begin{equation}\label{eq:md} 
\mu := \frac{1}{2n+1} \rb{\varepsilon_p^{-1}+1-\varepsilon_q+\varepsilon_q\varepsilon_r} = \rb{n+\frac 32} + \frac 12\sqrt{p}+\frac 12\sqrt{q}+\frac 12\sqrt{r}.
\end{equation}
Then the integral points $\alpha$ lying in this semi-open parallelepiped are given by
\ba
\alpha = 0, \text{ or } \alpha = \varepsilon_q + t\mu \quad (1\le t \le 2n).
\ea
Using \eqref{eq:re}, we verify that $\mu$ is totally positive. It follows that none of the totally positive elements lying in this semi-open parallelepiped are indecomposable.

\subsection{The parallelepiped $P_{prq}$}

Now we consider the parallelepiped $P_{prq}$ spanned by $\varepsilon_p^{-1},1,\varepsilon_r,\varepsilon_r\varepsilon_q$. Let
\ba
\alpha = a\varepsilon_p^{-1} + b + c\varepsilon_r + d\varepsilon_r\varepsilon_q  = x+y\sqrt{p}+z\sqrt{q}+w\sqrt{r} \in \mc O_K, \quad 0 \le a,b,c,d < 1
\ea
be an integral point in the semi-open parallelepiped. Since $\varepsilon_r\varepsilon_q$ is the only basis element of the parallelepiped with a nonzero coefficient in $\sqrt{q}$, we use \eqref{eq:fbib} and the condition $0\le d < 1$ to conclude that $d = \frac t{2n+2}$ for some $t\in\cb{0,\ldots,2n+1}$. Also, \eqref{eq:fbib} implies $2a-d, 2c-d, a-c, 2a-b\in\Z$. The condition $0\le a,b,c < 1$ then gives
\ba
(a,b,c,d) = \begin{cases}(\frac{t}{4n+4},\frac{t}{2n+2},\frac{t}{4n+4},\frac{t}{2n+2}) & (0\le t \le 2n+1),\\ (\frac 12+\frac{t}{4n+4},\frac{t}{2n+2},\frac 12+\frac{t}{4n+4},\frac{t}{2n+2}) & (0\le t \le 2n+1).\end{cases}
\ea
It follows that the integral points $\alpha$ lying in this semi-open parallelepiped are given by
\ba
\alpha = t\mu \quad (0\le t \le 2n+1), \text{ or } \alpha=\frac{\varepsilon_p^{-1}+\varepsilon_r}2 + t\mu \quad (0\le t \le 2n+1),
\ea
where $\mu$ is as in \eqref{eq:md}. Since $\mu$ is totally positive, we see that only $\mu$ and $\frac{\varepsilon_p^{-1}+\varepsilon_r}2$ can be indecomposable. We show that this is indeed the case.
\begin{prp}\label{prp:prqi} 
The elements $\mu, \frac{\varepsilon_p^{-1}+\varepsilon_r}2 \in \mc O_K^+$ are indecomposable.
\end{prp}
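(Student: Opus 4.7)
The plan is to argue by contradiction. For each $\gamma \in \{\mu, \xi\}$ where $\xi := (\varepsilon_p^{-1}+\varepsilon_r)/2$, I assume $\gamma = \alpha + \beta$ with $\alpha, \beta \in \mc O_K^+$, so that $0 < \sigma_i(\alpha) < \sigma_i(\gamma)$ strictly for every embedding $\sigma_i$ of $K$, and derive a contradiction. The treatments of $\mu$ and $\xi$ diverge because of the different arithmetic of the relative norms.

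For $\mu$, the key observation is the rational-integer identity
\ba
\sigma_1(\mu)\sigma_3(\mu) = \sigma_2(\mu)\sigma_4(\mu) = 2,
\ea
obtained by expanding $((n+3/2) + (\sqrt{p}\pm(\sqrt{q}+\sqrt{r}))/2) \cdot ((n+3/2) + (\sqrt{p}\mp(\sqrt{q}+\sqrt{r}))/2)$ and using $\sqrt{qr} = (2n+3)\sqrt{p}$. Thus $N_{K/K_p}(\mu) = 2 \in \Z$. For any decomposition, $N_{K/K_p}(\alpha) \in \mc O_{K_p}^+$ has both embeddings strictly less than $2$; since $\mc O_{K_p} = \Z[\sqrt{p}]$ and $\sqrt{p} > 1$, writing $A + B\sqrt{p}$ and combining $A > |B|\sqrt{p}$ with $A + |B|\sqrt{p} < 2$ forces $B = 0$, $A = 1$. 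Hence $\sigma_1(\alpha)\sigma_3(\alpha) = \sigma_2(\alpha)\sigma_4(\alpha) = 1$, so $\alpha$ is a totally positive unit. Setting $u = \sigma_1(\alpha)/\sigma_1(\mu)$ and $v = \sigma_3(\alpha)/\sigma_3(\mu)$ gives $uv = 1/2$; applying the same argument to $\beta = \mu - \alpha$ yields $(1-u)(1-v) = 1/2$, whence $u + v = 1$. The system $u + v = 1$, $uv = 1/2$ has discriminant $-1$ and so no real solutions, contradiction.

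For $\xi = (2n+1) - \sqrt{p}/2 + \sqrt{r}/2$, the analogous norm $N_{K/K_p}(\xi) = (2n+1)\varepsilon_p^{-1}$ is irrational and its two $K_p$-embeddings $(2n+1)\varepsilon_p^{-1}$ and $(2n+1)\varepsilon_p$ differ dramatically in size, so the above shortcut fails; I instead enumerate candidates directly. Writing $\alpha = x + y\sqrt{p} + z\sqrt{q} + w\sqrt{r}$ with $x,y,z,w \in \frac 12\Z$ and $x+z,\ y+w \in \Z$ (the integrality condition for type-(2) biquadratic fields), the pairwise conjugate sums such as $\sigma_1(\alpha)+\sigma_3(\alpha) = 2(x + y\sqrt{p})$ bounded by $\sigma_1(\xi)+\sigma_3(\xi) = 2(2n+1) - \sqrt{p}$, combined with positivity, yield
\ba
\bigl(|y| + |y + \tfrac 12|\bigr)\sqrt{p} < 2n+1, \quad 2|z|\sqrt{q} < 2n+1, \quad \bigl(|w| + |w - \tfrac 12|\bigr)\sqrt{r} < 2n+1.
\ea
For $n \ge 6$ these force $y \in \{-\frac 12, 0\}$, $z \in \{-\frac 12, 0, \frac 12\}$, $w \in \{0, \frac 12\}$; the congruence $y+w \in \Z$ further reduces to $(y, w) \in \{(-\frac 12, \frac 12), (0, 0)\}$. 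In each remaining subcase (split by the value of $z$ and the narrow range it imposes on $x$), either the positivity of $\sigma_3(\alpha)$ or of $\sigma_3(\xi - \alpha)$ reduces to the single inequality $\sqrt{p} + \sqrt{r} > (2n+1) + \sqrt{q}$, which follows from the identity
\ba
(\sqrt{p}+\sqrt{r})^2 - ((2n+1)+\sqrt{q})^2 = 4,
\ea
a direct consequence of $\sqrt{pr} = (2n+1)\sqrt{q}$ and $q = (2n+1)^2 - 4$. Every remaining subcase is thereby ruled out.

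The main obstacle is the enumeration for $\xi$: the extremely small conjugate $\sigma_3(\xi) = (\varepsilon_p^{-1}+\varepsilon_r^{-1})/2$ is of size $\sim 1/(4n)$, so the positivity margins are razor-thin and the case analysis must be executed with explicit half-integer precision using the sharp identity $(\sqrt{p}+\sqrt{r})^2 - ((2n+1)+\sqrt{q})^2 = 4$. The proof for $\mu$ is cleaner precisely because $N_{K/K_p}(\mu) = 2$ is a rational integer whose two $K_p$-conjugates coincide, enabling the elegant quadratic-polynomial contradiction.
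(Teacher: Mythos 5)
Your proof is correct, but for $\mu$ it takes a genuinely different route from the paper's. The paper argues directly on coefficients: writing a putative summand $\beta_1 = x_1+y_1\sqrt{p}+z_1\sqrt{q}+w_1\sqrt{r}$ with half-integer coefficients and WLOG $w_1\ge\frac 12$, total positivity gives $x_1 > w_1\sqrt{r}$ while $x_1 < n+\frac 32$, which pins down $(w_1,x_1)=(\frac 12, n+1)$; the complementary summand then has $x_2=\frac 12$ and $z_2\in\frac 12+\Z$, contradicting $x_2 > \vb{z_2}\sqrt{q}$ --- a five-line argument, with the $\xi := \frac{\varepsilon_p^{-1}+\varepsilon_r}2$ case declared ``similar''. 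Your argument for $\mu$ instead goes through the relative norm: the identity $\sigma_1(\mu)\sigma_3(\mu)=\sigma_2(\mu)\sigma_4(\mu)=2$ checks out (via $\sqrt{qr}=(2n+3)\sqrt{p}$), any totally positive summand is then forced to have $N_{K/K_p}=1$ (the totally positive $A+B\sqrt{p}$ with both embeddings $<2$ must be $1$), and the system $uv=(1-u)(1-v)=\frac 12$ with $u,v$ real forces $u+v=1$, i.e.\ $u(1-u)=\frac 12>\frac 14$, impossible. This is more conceptual and isolates a reusable principle --- an element of $\mc O_K^+$ whose relative norm to a quadratic subfield with ring of integers $\Z[\sqrt{p}]$, $p\ge 2$, equals the rational integer $2$ is indecomposable --- but, as you correctly flag, it cannot touch $\xi$, whose relative norm $(2n+1)\varepsilon_p^{-1}$ is irrational, so there you fall back on an enumeration in the same spirit as the paper's method. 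That enumeration is sound: the paired-conjugate bounds and the congruences $x+z, y+w\in\Z$ correctly reduce to $(y,w)\in\{(0,0),(-\frac 12,\frac 12)\}$ and $z\in\{0,\pm\frac 12\}$; by the symmetry $\alpha\leftrightarrow\xi-\alpha$ one may take $(y,w)=(0,0)$, whereupon $z=0$ makes $\alpha$ a positive rational integer while $\sigma_3(\xi)<1$, and $z=\pm\frac 12$ forces the half-integer $x>\frac{\sqrt q}2$, hence $x\ge n+\frac 12$ and $\sigma_3(\alpha)\ge x-\frac{\sqrt q}2 \ge n+\frac 12-\frac{\sqrt q}2 > \sigma_3(\xi)$, the last inequality being exactly $\sqrt{p}+\sqrt{r}>(2n+1)+\sqrt{q}$, guaranteed by your identity $(\sqrt{p}+\sqrt{r})^2-((2n+1)+\sqrt{q})^2=4$. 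Your write-up of these subcases is compressed (``every remaining subcase is thereby ruled out''), but the stated ingredients genuinely suffice, so the gap is expository rather than mathematical. Net comparison: the paper buys brevity and a uniform treatment of both elements; your version buys a structural explanation of \emph{why} $\mu$ is indecomposable, at the cost of a hybrid proof.
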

\begin{proof}
We prove $\mu$ is indecomposable; the proof for $\frac{\varepsilon_p^{-1}+\varepsilon_r}2$ is similar. Suppose we have a decomposition
\ba
\mu = \beta_1+\beta_2, \quad \beta_i = x_i+y_i\sqrt{p}+z_i\sqrt{q}+w_i\sqrt{r} \in \mc O_K^+.
\ea
Since $w_1+w_2 = \frac 12$ and $w_1,w_2\in\frac 12\Z$, we may assume $w_1 \ge \frac 12$. Since $\beta_1$ is totally positive, we have
\ba
2x_1-2w_1\sqrt{r} > 0.
\ea
As $x_1< n+\frac 32$, this forces $w_1 = \frac 12$, and $x_1 = n+1$. But then we have $w_2 = 0$, and $x_2 = \frac 12$. The integrality condition \eqref{eq:fbib} implies $z_2 \in \frac 12 + \Z$. Meanwhile, $\beta_2$ being totally positive implies
\ba
\textstyle x_2 - \vb{z_2}\sqrt{q} = \frac 12 - \vb{z_2}\sqrt{q} > 0,
\ea
which is impossible, because $\vb{z_2}\ge \frac 12$, and $\sqrt{q} > 1$.
\end{proof}

\subsection{The parallelepiped $P_{qpr}$}

Now we consider the parallelepiped $P_{qpr}$ spanned by $\varepsilon_q^{-1},1,\varepsilon_p,\varepsilon_p\varepsilon_r$. Let
\ba
\alpha = a\varepsilon_q^{-1} + b + c\varepsilon_p + d\varepsilon_p\varepsilon_r  = x+y\sqrt{p}+z\sqrt{q}+w\sqrt{r} \in \mc O_K, \quad 0 \le a,b,c,d < 1
\ea
be an integral point in the semi-open parallelepiped. Since $\varepsilon_p\varepsilon_r$ is the only basis element of the parallelepiped with a nonzero coefficient in $\sqrt{r}$, we use \eqref{eq:fbib} and the condition $0\le d < 1$ to conclude that $d = \frac{t}{4n}$ for some $t\in\cb{0,\ldots,4n-1}$. Also, \eqref{eq:fbib} implies $a-2d,c+2d, a+2b\in\Z$. Then condition $0\le a, b, c < 1$ then gives 
\ba
(a,b,c,d) = \begin{cases} (0,0,0,0),\\ (\frac{t}{2n}, 1-\frac{t}{4n},1-\frac{t}{2n},\frac{t}{4n}) & (1 \le t \le 2n-1),\\ (0,\frac 12,0,\frac 12),\\ (\frac{t}{2n}, \frac 12-\frac{t}{4n},1-\frac{t}{2n},\frac 12+\frac{t}{4n}) & (1 \le t \le 2n-1).\end{cases}
\ea
It follows that the integral points $\alpha$ lying in this semi-open parallelepiped are given by
\ba
\alpha = \begin{cases} 0,\\ 1+\varepsilon_p + t(\mu-1) & (1\le t \le 2n-1),\\ \frac{1+\varepsilon_p\varepsilon_r}2,\\ \frac{1+\varepsilon_p\varepsilon_r}2 + \varepsilon_p + t(\mu-1) & (1\le t \le 2n-1),\end{cases}
\ea
where $\mu$ is as in \eqref{eq:md}. Note that $\mu-1$ is \emph{not} totally positive. We have explicit decompositions for the following elements:
\ba
1+\varepsilon_p + (\mu-1) &= 1 + \rb{\varepsilon_p+\mu-1},\\
1+\varepsilon_p + 2(\mu-1) &= \mu + \rb{\varepsilon_p+\mu-1},\\
1+\varepsilon_p + (2n-1)(\mu-1) &= \varepsilon_p\varepsilon_q + \varepsilon_q^{-1},\\
\frac{1+\varepsilon_p\varepsilon_r}2 + \varepsilon_p + (\mu-1) &= \frac{1+\varepsilon_p\varepsilon_r}2 + \rb{\varepsilon_p+\mu-1},\\
\frac{1+\varepsilon_p\varepsilon_r}2 + \varepsilon_p + 2(\mu-1) &= \rb{\frac{1+\varepsilon_p\varepsilon_r}2+\mu-1} + \rb{\varepsilon_p+\mu-1},\\
\frac{1+\varepsilon_p\varepsilon_r}2 + \varepsilon_p + (2n-1)(\mu-1) &= \rb{\varepsilon_p\varepsilon_r-\mu-1} + \varepsilon_q^{-1}.
\ea
Meanwhile, we have $\frac{1+\varepsilon_p\varepsilon_r}2 = \varepsilon_p \big(\frac{\varepsilon_p^{-1}+\varepsilon_r}2\big)$, so $\frac{1+\varepsilon_p\varepsilon_r}2$ is equivalent to the indecomposable element $\frac{\varepsilon_p^{-1}+\varepsilon_r}2$ found in \Cref{prp:prqi}. The following proposition says the remaining elements are indecomposable.
\begin{prp}\label{prp:qpri} 
For $3\le t \le 2n-2$, the elements $1+\varepsilon_p + t(\mu-1)$ and $\frac{1+\varepsilon_p\varepsilon_r}2 + \varepsilon_p + t(\mu-1)$ are indecomposable.
\end{prp}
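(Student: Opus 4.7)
Following the approach of \Cref{prp:prqi}, suppose toward contradiction that $\alpha_t := 1+\varepsilon_p+t(\mu-1)$ admits a decomposition $\alpha_t = \beta_1+\beta_2$ with $\beta_i = x_i+y_i\sqrt{p}+z_i\sqrt{q}+w_i\sqrt{r} \in \mc O_K^+$. Recall the coordinates $x=(2n+1)+t(n+\tfrac12)$, $y=1+\tfrac{t}{2}$, and $z=w=\tfrac{t}{2}$. The plan is to use total positivity together with the integrality condition \eqref{eq:fbib} to force $\beta_1$ into finitely many candidate configurations, and verify that each such $\beta_1$ makes $\beta_2 = \alpha_t-\beta_1$ fail total positivity.

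The key estimate is $\sigma_2(\alpha_t)<1$ for every $t\ge 2$. Using the factorisations of $p,q,r$, one derives the identity $(\sqrt{p}+\sqrt{r})^2=(2n+1+\sqrt{q})^2+4$, whence
\[
\sigma_2(\mu)-1 \;=\; -\frac{2}{(\sqrt{p}+\sqrt{r})+(2n+1+\sqrt{q})}.
\]
Combined with $\varepsilon_p^{-1}=1/(2n+\sqrt{p})$, this yields $\sigma_2(\alpha_t)=1+\varepsilon_p^{-1}+t(\sigma_2(\mu)-1)<1$ when $t\ge 2$, so $\sigma_2(\beta_i)\in(0,1)$ for $i=1,2$. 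By symmetry assume $w_1\ge w_2$, so $w_1\ge t/4$. Standard pair-sum estimates on Galois conjugates give $x_1>w_1\sqrt{r}$, $x_1>|y_1|\sqrt{p}$, and $x_1>|z_1|\sqrt{q}$.

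Combined with $0<x_1-y_1\sqrt{p}+z_1\sqrt{q}-w_1\sqrt{r}<1$ and the integrality condition \eqref{eq:fbib}, these bounds restrict $(w_1,y_1,z_1,x_1)$ to a small finite list of candidates for each admissible $w_1\in\tfrac12\Z\cap[t/4,t/2]$. The natural source of elements in $\mc O_K^+$ with $\sigma_2<1$ is the totally positive multiples of $\varepsilon_q^{-1}\in\mc O_{K_q}$, but any such element has $\sigma_4(\varepsilon_q^{-k})=\varepsilon_q^k\ge\varepsilon_q$, which exceeds $\sigma_4(\alpha_t)=1+\varepsilon_p^{-1}+t(\sigma_4(\mu)-1)=O(t)$ in the range $t\le 2n-2$. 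Each remaining candidate $\beta_1$ will be shown to force $\sigma_4(\beta_2)<0$, exhausting all cases. The proof for $\frac{1+\varepsilon_p\varepsilon_r}{2}+\varepsilon_p+t(\mu-1)$ runs in parallel, with the integrality parities shifted.

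The main obstacle is executing the case analysis uniformly in $t$. Unlike \Cref{prp:prqi}, where $\mu$ has very small coefficients and two subcases suffice, the number of admissible $(w_1,y_1,z_1,x_1)$ here grows mildly with $t$, and the endpoints $t=3,2n-2$ are tight. I expect the explicit decompositions exhibited at $t=1,2,2n-1$ just before this proposition to appear naturally as the boundary cases just outside our range, with the contradiction $\sigma_4(\beta_2)<0$ holding uniformly throughout $3\le t\le 2n-2$.
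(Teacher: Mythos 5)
Your preliminary computations check out, and one of them is genuinely elegant: the identity $(\sqrt{p}+\sqrt{r})^2 = (2n+1+\sqrt{q})^2+4$ is correct (since $pr = (2n+1)^2 q$), the formula $\sigma_2(\mu)-1 = -2/\bigl((\sqrt{p}+\sqrt{r})+(2n+1+\sqrt{q})\bigr)$ follows, and $\sigma_2(\gamma_t)<1$ for $t\ge 2$ is a valid and potentially useful observation. But the proposal has a genuine gap: the entire contradiction is deferred to an unexecuted case analysis. You assert that total positivity together with \eqref{eq:fbib} restricts $(w_1,y_1,z_1,x_1)$ to ``a small finite list'' and that each candidate ``will be shown to force $\sigma_4(\beta_2)<0$,'' but neither the list nor the verification is produced --- and you yourself name uniformity in $t$ as the main obstacle. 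That obstacle is precisely where the content of the proposition lies: the number of lattice points compatible with your constraints grows with $t$, and without a structural mechanism the enumeration cannot be closed uniformly over $3\le t\le 2n-2$. The heuristic about totally positive multiples of $\varepsilon_q^{-1}$ being ``the natural source'' of elements with small $\sigma_2$ is not a rigorous classification, and the expectation that every candidate fails via $\sigma_4(\beta_2)<0$ is unsubstantiated. There is also a small slip: from $-\frac 12\le w_2$ (which is what total positivity yields) one only gets $w_1\le\frac{t+1}2$, not $w_1\le\frac t2$, so your candidate range for $w_1$ omits an endpoint.

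For comparison, the paper's proof supplies exactly the missing mechanism. After deriving the coordinate bounds $\vb{y_i},\vb{z_i}\le\frac t2+1$, $\vb{w_i}\le\frac{t+1}2$, it first proves the key claim $\vb{z_i-w_i}\le\frac 12$, using the pair-sum inequalities \eqref{eq:qpr_s24} and the estimate $\vb{z_1\sqrt{q}-w_1\sqrt{r}}\ge\sqrt{q}$ when $\vb{z_i - w_i} \ge 1$; this collapses the analysis to two cases. In the case $z_i=w_i$ it shows $x_i\ge 2ny_i$, rewrites $x_i+y_i\sqrt{p}=x_i'+y_i\varepsilon_p$ with $x_i'\ge\vb{z_i}$, and extracts from the rational approximations \eqref{eq:re} the quantitative lower bounds $y_i\ge 3\vb{z_i}$ (or $3\vb{z_i}-1$, or $\vb{z_1}+1$ in the shifted subcase); summing gives $y_1+y_2>\frac t2+1$, contradicting the $\sqrt{p}$-coefficient of $\gamma_t$ --- note the contradiction is a coefficient-sum inequality, not $\sigma_4(\beta_2)<0$ for enumerated candidates. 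In the case $\vb{z_i-w_i}=\frac 12$ a direct estimate via \eqref{eq:qpr_s24} and \eqref{eq:re} rules out $3\le t\le 2n-3$ uniformly, with a separate short argument at the endpoint $t=2n-2$ using \eqref{eq:qpr_s13}. If you want to rescue your approach, the claim $\vb{z_i-w_i}\le\frac 12$ (or some substitute that bounds the candidate set independently of $t$) is the step you need to find and prove; as written, the proposal is a plausible plan rather than a proof.
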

\begin{proof}
We prove that $1+\varepsilon_p + t(\mu-1)$ is indecomposable for $3\le t \le 2n-2$; the argument for $\frac{1+\varepsilon_p\varepsilon_r}2 + \varepsilon_p + t(\mu-1)$ is similar. Throughout the proof, we keep in mind of the assumption that $n\ge 6$ and $3\le t \le 2n-2$. Write 
\ba
\gamma_t := 1+\varepsilon_p + t(\mu-1) = (t+2)\rb{n+\frac 12} + \rb{\frac t2+1}\sqrt{p} + \frac t2\sqrt{q} + \frac t2\sqrt{r},
\ea
and suppose we have a decomposition 
\ba
\gamma_t = \beta_1+\beta_2, \quad \beta_i = x_i+y_i\sqrt{p}+z_i\sqrt{q}+w_i\sqrt{r} \in \mc O_K^+.
\ea
Total positivity of $\beta_1, \beta_2$ implies
\begin{equation}\label{eq:dtp} 
\begin{aligned}
x_i - \vb{y_i}\sqrt{p} &>0, \quad & \quad x_i - \vb{z_i}\sqrt{q} &> 0, \quad & \quad x_i-\vb{w_i}\sqrt{r} &>0.
\end{aligned}
\end{equation}
Since $x_i \le (t+2)(n+\frac 12) - \frac 12$, this implies 
\begin{equation}\label{eq:yzwm} 
\vb{y_i}\le \frac t2+1, \quad \vb{z_i}\le \frac t2+1, \quad \vb{w_i}\le \frac{t+1}2.
\end{equation}
It follows that
\ba
\textstyle 0 \le y_i \le \frac t2+1, \quad -1\le z_i \le \frac t2+1, \quad -\frac 12\le w_i \le \frac{t+1}2.
\ea
Total positivity of $\beta_1, \beta_2$ also implies
\begin{equation}\label{eq:qpr_s24} 
x_1-y_1\sqrt{p}-\vb{z_1\sqrt{q}-w_1\sqrt{r}} > 0, \quad x_2-y_2\sqrt{p}-\vb{z_2\sqrt{q}-w_2\sqrt{r}} > 0,
\end{equation}
and
\begin{equation}\label{eq:qpr_s13} 
x_1+y_1\sqrt{p}-\vb{z_1\sqrt{q}+w_1\sqrt{r}} > 0, \quad x_2+y_2\sqrt{p}-\vb{z_2\sqrt{q}+w_2\sqrt{r}} > 0. 
\end{equation}

Now we claim that $\vb{z_i-w_i}\le \frac 12$. To see this, we suppose to the contrary that $\vb{z_i-w_i}\ge 1$; by swapping indices, we may assume $w_1-z_1\ge 1$. Then, using \eqref{eq:qpr_s24} we have
\ba
(t+2)\rb{n+\frac 12} - \rb{\frac t2+1}\sqrt{p} &= \rb{x_1+x_2}-\rb{y_1+y_2}\sqrt{p} > \vb{z_1\sqrt{q}-w_1\sqrt{r}}+\vb{z_2\sqrt{q}-w_2\sqrt{r}}.
\ea
Meanwhile, using that $w_1 \ge z_1+1 \ge 0$, we obtain the inequality
\ba
\vb{z_1\sqrt{q}-w_1\sqrt{r}} = \vb{\rb{z_1-w_1}\sqrt{q} - w_1\rb{\sqrt{r}-\sqrt{q}}} \ge \sqrt{q}.
\ea
This implies 
\ba
2n & \ge (t+2)\rb{n+\frac 12} - \rb{\frac t2+1}(2n-1) > (t+2)\rb{n+\frac 12} - \rb{\frac t2+1}\sqrt{p}\\
&> \vb{z_1\sqrt{q}-w_1\sqrt{r}}+\vb{z_2\sqrt{q}-w_2\sqrt{r}} > \sqrt{q},
\ea
a contradiction. So the claim is established, and we are left with two possibilities, namely $z_i=w_i$, and $\vb{z_i-w_i} = \frac 12$. 

First we consider the case $z_i=w_i$. We note that our assumptions above imply $x_i \ge 2ny_i$. To see this, we suppose to the contrary that $x_i \le 2n y_i-\frac 12$. Then \eqref{eq:dtp} gives $(2n-\sqrt{p})y_i > \frac 12$. Using \eqref{eq:re}, this gives $\frac{y_i}{4n-1} > \frac 12$, hence $y_i \ge 2n > \frac t2+1$, which contradicts \eqref{eq:yzwm}. So we may write $x_i+y_i\sqrt{p} = x'_i + y_i(2n+\sqrt{p})$, with $x'_i \ge 0$.

Next we claim that $x'_i \ge \vb{z_i}$. Suppose this is not the case, and we have $x'_i \le \vb{z_i}-1$ (note that we have $x'_i-z_i\in\Z$). Then \eqref{eq:qpr_s24} says
\ba
y_i \rb{2n-\sqrt{p}} > \vb{z_i}\rb{\sqrt{r}-\sqrt{q}} - x'_i \ge \vb{z_i}\rb{\sqrt{r}-\sqrt{q}-1}+1 \ge 1.
\ea
Using \eqref{eq:re}, we get $y_i > (2n-\sqrt{p})^{-1} > 4n-1$, which contradicts \eqref{eq:yzwm}. So we have $x'_i \ge \vb{z_i}$ as claimed. Since $x'_1+x'_2 = \frac t2+1 = z_1+z_2+1$, and $x'_i-z_i \in\Z$, it follows that $x'_i = \vb{z_i}$ or $\vb{z_i}+1$. By swapping indices, we may actually assume that $(x'_1,x'_2) = (\vb{z_1},\vb{z_2})$ or $(\vb{z_1}+1,\vb{z_2})$. 
\ber
\item Suppose $(x'_1,x'_2) = (\vb{z_1},\vb{z_2})$. Then we use \eqref{eq:qpr_s24} and obtain the inequality
\ba
y_i \rb{2n-\sqrt{p}} > \vb{z_i}\rb{\sqrt{r}-\sqrt{q}} - x'_i = \vb{z_i}\rb{\sqrt{r}-\sqrt{q}-1}, \quad i\in\cb{1,2}.
\ea
Using \eqref{eq:re}, this yields
\ba
y_i > \vb{z_i}\frac{\sqrt{r}-\sqrt{q}-1}{2n-\sqrt{p}} > \vb{z_i}\rb{3-\frac{16n+14}{8n^2+10n+3}}.
\ea
For $n\ge 6$, we verify that this inequality implies
\begin{equation}\label{eq:qpr_yi} 
y_i \ge \begin{cases} 3\vb{z_i} & \text{ if } \vb{z_i}\le 3, \\ 3\vb{z_i}-1 & \text{ otherwise.}\end{cases}
\end{equation}
\item Now suppose $(x'_1,x'_2) = (\vb{z_1}+1,\vb{z_2})$. In this case, the bound \eqref{eq:qpr_yi} still holds for $i=2$. On the other hand, we use \eqref{eq:qpr_s13} and obtain the inequality
\ba
y_1 \rb{2n+\sqrt{p}} > \vb{z_1}\rb{\sqrt{r}+\sqrt{q}} - x'_1 = \vb{z_1}\rb{\sqrt{r}+\sqrt{q}-1} - 1.
\ea
Using \eqref{eq:re}, this yields
\ba
y_1 > \vb{z_1}\frac{\sqrt{r}+\sqrt{q}-1}{2n+\sqrt{p}} - \frac{1}{2n+\sqrt{p}} > \vb{z_1} \rb{1+\frac{8n^2+n-3}{16n^3+12n^2}} - \frac{4n-1}{16n^2-4n-1}.
\ea
This implies
\ba
y_1 \ge \begin{cases} \vb{z_1} & \text{ if } \vb{z_1}\le \frac 12,\\ \vb{z_1}+1 & \text{ otherwise.}\end{cases}
\ea
\ee
After excluding the trivial decomposition $(x'_i,y_i,z_i,w_i) = (0,0,0,0)$, we conclude in each of the cases that when $\vb{z_1}+\vb{z_2} \ge \frac 32$, we always have $y_1+y_2 \ge \vb{z_1}+\vb{z_2}+2 >  \frac t2+1$, a contradiction. So no decomposition of $\gamma_t$ with $z_i=w_i$ exists.

Now we consider the case $\vb{z_i-w_i} = \frac 12$. We may actually assume $w_1 = z_1 -\frac 12$. Then \eqref{eq:yzwm} says $0\le z_1 \le \frac t2+1$. For $3\le t \le 2n-2$, we check using \eqref{eq:re} that
\ba
z_1\sqrt{q} - \rb{z_1-\frac 12}\sqrt{r} > 0, \quad \rb{\frac t2-z_1}\sqrt{q} - \rb{\frac{t+1}2-z_1}\sqrt{r}<0.
\ea
It follows that
\ba
\vb{z_1\sqrt{q}-w_1\sqrt{r}} + \vb{z_2\sqrt{q}-w_2\sqrt{r}} &= \vb{z_1\sqrt{q}-\rb{z_1-\frac 12}\sqrt{r}} + \vb{\rb{\frac t2-z_1}\sqrt{q} - \rb{\frac{t+1}2-z_1}\sqrt{r}}\\
&= \sqrt{r}+\rb{\frac t2-2z_1}\rb{\sqrt{r}-\sqrt{q}} \ge \rb{\frac t2+2}\sqrt{q} - \rb{\frac t2+1}\sqrt{r}.
\ea
Using \eqref{eq:qpr_s24}, we get
\ba
\rb{\frac t2+1} \rb{2n+1-\sqrt{p}} > \rb{\frac t2+2}\sqrt{q} - \rb{\frac t2+1}\sqrt{r},
\ea
which can be checked using \eqref{eq:re} that it is not solvable for $n\ge 6$ and $3\le t \le 2n-3$. Therefore, for $3\le t \le 2n-3$, no decomposition of $\gamma_t$ with $\vb{z_i-w_i} = \frac 12$ exists. Now let $t=2n-2$. Then \eqref{eq:qpr_s24} implies
\ba
\rb{\frac t2+1}\rb{2n+1-\sqrt{p}} > \sqrt{r}+\rb{\frac t2-2z_1}\rb{\sqrt{r}-\sqrt{q}},
\ea
which implies $z_1 = \frac t2 + 1$ using \eqref{eq:re}. But then \eqref{eq:qpr_s13} and \eqref{eq:re} implies 
\ba
4n^2+n > n \rb{2n+1+\sqrt{p}} > (n+1)\sqrt{q} + n\sqrt{r} > \frac{16n^4+32n^3+14n^2-4n-3}{4n^2+3n}
\ea
which does not hold for $n\ge 6$. So $\gamma_{2n-2}$ is also indecomposable. This finishes the proof of the proposition.
\end{proof}

\subsection{The parallelepiped $P_{qrp}$} Now we consider the parallelepiped $P_{qrp}$ spanned by $\varepsilon_q^{-1},1,\varepsilon_r,\varepsilon_r\varepsilon_p$. Let
\ba
\alpha = a\varepsilon_q^{-1} + b + c\varepsilon_r + d\varepsilon_r\varepsilon_p = x+y\sqrt{p}+z\sqrt{q}+w\sqrt{r} \in \mc O_K, \quad 0 \le a,b,c,d < 1
\ea
be an integral point in the semi-open parallelepiped. Since $\varepsilon_r\varepsilon_p$ is the only basis element of the parallelepiped with a nonzero coefficient in $\sqrt{p}$, we use \eqref{eq:fbib} and the condition $0\le d < 1$ to conclude that $d=\frac t{4n+4}$ for some $t\in\cb{0,\ldots,4n+3}$. Also,\eqref{eq:fbib} implies $a+2d, c-2d, a-2b\in\Z$. The condition $0\le a,b,c < 1$ then gives
\ba
(a,b,c,d) = \begin{cases} (0,0,0,0),\\ (1-\frac{t}{2n+2}, 1-\frac{t}{4n+4}, \frac{t}{2n+2}, \frac{t}{4n+4}) & (1\le t \le 2n+1),\\ (0,\frac 12,0,\frac 12),\\ (1-\frac{t}{2n+2}, \frac 12 - \frac{t}{4n+4}, \frac{t}{2n+2}, \frac 12+\frac{t}{4n+4}) & (1\le t \le 2n+1).\end{cases}
\ea
It follows that the integral points $\alpha$ lying in this semi-open parallelepiped are given by
\ba
\alpha = \begin{cases} 0,\\ 1+\varepsilon_q^{-1}+t(\mu-1) & (1\le t \le 2n+1),\\ \frac{1+\varepsilon_r\varepsilon_p}2,\\ \frac{1+\varepsilon_r\varepsilon_p}2 + \varepsilon_q^{-1} + t(\mu-1) & (1\le t \le 2n+1), \end{cases}
\ea
where $\mu$ is as in \eqref{eq:md}. The element $\frac{1+\varepsilon_r\varepsilon_p}2 = \varepsilon_p\big(\frac{\varepsilon_p^{-1}+\varepsilon_r}2\big)$ is equivalent to the indecomposable element $\frac{\varepsilon_p^{-1}+\varepsilon_r}2$ found in \Cref{prp:prqi}. For the other elements, we check using arguments analogous to \Cref{prp:qpri} that $1+\varepsilon_q^{-1}+t(\mu-1)$ and $\frac{1+\varepsilon_r\varepsilon_p}2 + \varepsilon_q^{-1} + t(\mu-1)$ for $4\le t \le 2n-1$ are the indecomposable elements.

\subsection{The parallelepiped $P_{rpq}$} Now we consider the parallelepiped $P_{rpq}$ spanned by $\varepsilon_r^{-1},1,\varepsilon_p,\varepsilon_p\varepsilon_q$. Let
\ba
\alpha = a\varepsilon_r^{-1} + b + c\varepsilon_p + d\varepsilon_p\varepsilon_q = x+y\sqrt{p}+z\sqrt{q}+w\sqrt{r} \in \mc O_K, \quad 0 \le a,b,c,d < 1
\ea
be an integral point in the semi-open parallelepiped. Since $\varepsilon_p\varepsilon_q$ is the only basis element of the parallelepiped with a nonzero coefficient in $\sqrt{q}$, we use \eqref{eq:fbib} and the condition $0\le d < 1$ to conclude that $d = \frac{t}{2n}$ for some $t \in \cb{0,\ldots, 2n-1}$. Also, \eqref{eq:fbib} implies $2a+d, 2c+d, a-c, 2a+b\in\Z$. The condition $0\le a,b,c < 1$ then gives
\ba
(a,b,c,d) = \begin{cases} (0,0,0,0),\\ (1-\frac{t}{4n}, \frac{t}{2n}, 1-\frac{t}{4n}, \frac{t}{2n}) &(1\le t \le 2n-1),\\ (\frac 12-\frac{t}{4n}, \frac{t}{2n}, \frac 12-\frac{t}{4n}, \frac{t}{2n}) & (0\le t \le 2n-1).\end{cases}
\ea
It follows that the integral points $\alpha$ lying in this semi-open parallelepiped are given by
\ba
\alpha = \begin{cases} 0,\\ \varepsilon_r^{-1}+\varepsilon_p+t(\mu-2) & (1\le t \le 2n-1),\\ \frac{\varepsilon_r^{-1}+\varepsilon_p}2 + t(\mu-2) & (0\le t \le 2n-1),\end{cases}
\ea
where $\mu$ is as in \eqref{eq:md}. Note that $\mu-2$ is \emph{not} totally positive. The element $\frac{\varepsilon_r^{-1}+\varepsilon_p}2 = \varepsilon_p\varepsilon_r^{-1} \big(\frac{\varepsilon_p^{-1}+\varepsilon_r}2\big)$ is equivalent to the indecomposable element $\frac{\varepsilon_p^{-1}+\varepsilon_r}2$ found in \Cref{prp:prqi}. For the other elements, we check using arguments analogous to \Cref{prp:qpri} that $\frac{\varepsilon_r^{-1}+\varepsilon_p}2 + t(\mu-2)$ for $2\le t \le 2n-1$ are the indecomposable elements.

\subsection{The parallelepiped $P_{rqp}$} Now we consider the parallelepiped $P_{rqp}$ spanned by $\varepsilon_r^{-1},1,\varepsilon_q,\varepsilon_q\varepsilon_p$. Let
\ba
\alpha = a\varepsilon_r^{-1} + b + c\varepsilon_q + d\varepsilon_q\varepsilon_p = x+y\sqrt{p}+z\sqrt{q}+w\sqrt{r} \in \mc O_K, \quad 0 \le a,b,c,d < 1
\ea
be an integral point in the semi-open parallelepiped. Since $\varepsilon_q\varepsilon_p$ is the only basis element of the parallelepiped with a nonzero coefficient in $\sqrt{p}$, we use \eqref{eq:fbib} and the condition $0\le d < 1$ to conclude that $d = \frac{t}{2n+1}$ for some $t\in \cb{0,\ldots, 2n}$. Also, \eqref{eq:fbib} implies $a+d,c-d,a+b\in\Z$. The condition $0\le a,b,c < 1$ then gives
\ba
(a,b,c,d) = \begin{cases} (0,0,0,0), \text{ or }\\ (1-\frac{t}{2n+1},\frac{t}{2n+1},\frac{t}{2n+1},\frac{t}{2n+1}) & (1\le t \le 2n).\end{cases}
\ea
It follows that the integral points $\alpha$ lying in this semi-open parallelepiped are given by
\ba
\alpha = 0, \text{ or } \alpha = \varepsilon_r^{-1} + t(\mu-2) \quad (1\le t \le 2n),
\ea
where $\mu$ is as in \eqref{eq:md}. For $1\le t \le 2n$, we check that $\varepsilon_r^{-1} + t(\mu-2) - \varepsilon_q \in \mc O_K^+$. It follows that none of the totally positive elements lying in this semi-open parallelepiped are indecomposable.

Combining the computations above, we obtain a complete set of indecomposable elements in $\mc O_K^+$ modulo totally positive units. The proof of \Cref{thm:f1} is thus finished.

\subsection{More families}\label{sect:mbf}

We apply analogous arguments and compute the indecomposable elements for some other families of biquadratic fields. 

Consider the family $K = \Q(\sqrt{p},\sqrt{q})$, with $p=(2n-1)(2n+1)$, $q=(4n-3)(4n+1)$, where $9\le n \in \N$ is chosen such that $p,q$ are coprime and squarefree. Then we have $r = pq = (8n^2-2n-3)(8n^2-2n-1)$. We have continued fractions
\ba
\sqrt{p} = [2n-1;\ol{1,4n-2}], \quad \frac{-1+\sqrt{q}}2 = [2n-2;\ol{1,4n-3}], \quad \sqrt{r} = [8n^2-2n-3;\ol{1,16n^2-4n-6}].
\ea
Using results from \Cref{sect:qi}, the fundamental units of the quadratic subfields $K_p, K_q, K_r$ are given respectively by
\ba
\varepsilon_p = 2n+\sqrt{p}, \quad \varepsilon_q = \frac{4n-1+\sqrt{q}}2, \quad \varepsilon_r = (8n^2-2n-2)+\sqrt{r}.
\ea
All the fundamental units above are totally positive, and by \eqref{eq:qil} we have $\iota(K_p) = \iota(K_q) = \iota(K_r)=1$, so the totally positive units are the only indecomposable elements in $\mc O_{K_p}^+, \mc O_{K_q}^+$, and $\mc O_{K_r}^+$. Using Kubota's criterion, we verify that $\mc O_K^{\times,+}$ is generated by $\varepsilon_p, \varepsilon_q, \varepsilon_r$. Via analogous computations we obtain the following theorem.

\begin{thm}\label{thm:f2} 
Let $n\ge 9$ be an integer such that $p = (2n-1)(2n+1)$, $q = (4n-3)(4n+1)$ are coprime, squarefree integers. Let $K = \Q(\sqrt{p},\sqrt{q})$, and $r=pq$. A complete set of indecomposable elements of $\mc O_K^+$ modulo totally positive units is given by
\ba
&1, \quad \frac{\varepsilon_p^{-1}+\varepsilon_r}2 + t\rb{\varepsilon_q-\varepsilon_p^{-1}} & (0\le t \le 2n-2),\\
&\rb{\varepsilon_p^{-1}-\varepsilon_q} + t\rb{\varepsilon_p\varepsilon_q-1} & (1\le t \le 2n-1).
\ea
In particular, we have $\iota(K) = 4n-1$.
\end{thm}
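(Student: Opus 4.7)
\textbf{Proof proposal for \Cref{thm:f2}.} The strategy mirrors the proof of \Cref{thm:f1}, whose structural steps we retain: embed $K$ into the Minkowski space $\R^4$, use the Colmez--Diaz y Diaz simplicial decomposition of a fundamental domain for $\mc O_K^{\times,+}$ into the six cones $C_{xyz} = \R_{\ge 0}\pb{1, \varepsilon_x, \varepsilon_x\varepsilon_y, \varepsilon_x\varepsilon_y\varepsilon_z}$ for permutations $(x,y,z)$ of $(p,q,r)$, translate each cone by $\varepsilon_x^{-1}$ to reduce indecomposability to an enumeration in the half-open parallelepiped $P_{xyz} = [0,1)\pb{\varepsilon_x^{-1}, 1, \varepsilon_y, \varepsilon_y\varepsilon_z}$, and in each parallelepiped list the lattice points and sift the indecomposable ones. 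The preliminary data has already been assembled in the setup: $p \equiv 3$, $q\equiv 1 \pmod 4$ places $K$ in type (2), so the integrality criterion \eqref{eq:fbib} applies; the three fundamental units $\varepsilon_p,\varepsilon_q,\varepsilon_r$ are totally positive, each quadratic subfield has $\iota = 1$, and Kubota's criterion yields $\mc O_K^{\times,+} = \pb{\varepsilon_p, \varepsilon_q, \varepsilon_r}$.

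First I would record explicit expressions for $\varepsilon_x^{-1}$ and the pairwise products $\varepsilon_x\varepsilon_y$ in the basis $\{1,\sqrt{p},\sqrt{q},\sqrt{r}\}$, together with rational approximations of $\sqrt{p},\sqrt{q},\sqrt{r}$ derived from the continued fractions stated in the setup (analogous to \eqref{eq:re}). Next, for each of the six parallelepipeds $P_{xyz}$, I would write a generic lattice point as $a\varepsilon_x^{-1}+b+c\varepsilon_y+d\varepsilon_y\varepsilon_z$ with $a,b,c,d\in[0,1)$; the coefficient of $\sqrt{z}$ forces $d$ to lie in a finite arithmetic progression $\tfrac{t}{N}$ (where $N$ is read off from the denominator of the corresponding coefficient of $\varepsilon_y\varepsilon_z$), and the remaining constraints \eqref{eq:fbib} pin down $a,b,c$ up to at most a $\tfrac12$-shift, yielding one or two one-parameter families of lattice points per cone.

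Within each family, I would then (a) verify total positivity using the rational bounds on $\sqrt{p},\sqrt{q},\sqrt{r}$; (b) expose explicit two-term decompositions for the candidates that reduce (for instance, at the endpoints $t=0$ or $t=2n-1$ and, when they arise, for intermediate $t$ where a decomposition inside the lattice of a previously identified cone is possible); and (c) establish indecomposability of the remaining candidates by the same contradiction scheme as \Cref{prp:qpri}: assume $\gamma = \beta_1+\beta_2$ with $\beta_i = x_i+y_i\sqrt{p}+z_i\sqrt{q}+w_i\sqrt{r} \in \mc O_K^+$, use the four positivity inequalities $\sigma_j(\beta_i)>0$ to bound $|y_i|,|z_i|,|w_i|$ in terms of $x_i$, and combine these with \eqref{eq:fbib} and the rational approximations to force a contradiction. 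Finally, cross-referencing the cones and factoring out totally positive units, I would check that the resulting indecomposables consolidate into the two families listed in the theorem, together with $1$, giving $\iota(K) = 1 + 2(2n-1) = 4n-1$.

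The main obstacle will be step (c), the indecomposability arguments. As in \Cref{prp:qpri}, the estimates are tightest for parameters $t$ near the middle of the stated range and require a careful split according to whether $z_i = w_i$, $|z_i - w_i| = \tfrac12$, or $|z_i - w_i| \ge 1$, with the latter ruled out by a $\sqrt{q}$-lower bound on $|z_i\sqrt{q}-w_i\sqrt{r}|$; in the remaining cases one recovers the refined lower bounds of the form $y_i \ge c|z_i|$ by comparing against the four conjugate inequalities simultaneously. Because the family in \Cref{thm:f2} has a substantially larger $r$ and a more spread-out system of fundamental units than the family in \Cref{thm:f1}, the arithmetic cutoffs that determine the admissible range of the parameter $t$ will require the hypothesis $n\ge 9$, and verifying that the derived polynomial inequalities in $n$ are actually satisfied in this range is the computational bottleneck of the argument.
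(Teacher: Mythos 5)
Your proposal takes essentially the same approach as the paper: for \Cref{thm:f2} the paper gives no new argument but states that the result follows ``via analogous computations'' to the detailed proof of \Cref{thm:f1}, i.e.\ the signed fundamental domain of Colmez and Diaz y Diaz--Friedman decomposed into the six cones $C_{xyz}$, enumeration of integral points in the translated parallelepipeds via \eqref{eq:fbib} and rational approximations analogous to \eqref{eq:re}, explicit two-term decompositions for the reducible candidates, and \Cref{prp:qpri}-style total-positivity contradictions (with the same case split on $\vb{z_i-w_i}$) for the rest. Your outline reproduces this plan faithfully, including the correct type-(2) classification, the role of Kubota's criterion, and the count $\iota(K)=1+2(2n-1)=4n-1$.
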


Next we consider the family $K = \Q(\sqrt{p},\sqrt{q})$, with $p=(2n-1)(2n+1)$, $q=(4n-1)(4n+3)$, where $2\le n \in\N$ is chosen such that $p,q$ are coprime and squarefree. Then we have $r = pq = (8n^2+2n-3)(8n^2+2n-1)$. We have continued fractions
\ba
\sqrt{p} = [2n-1;\ol{1,4n-2}], \quad \frac{-1+\sqrt{q}}2 = [2n-1;\ol{1,4n-1}], \quad \sqrt{r} = [8n^2+2n-3;\ol{1,16n^2+4n-6}].
\ea
Using results from \Cref{sect:qi}, the fundamental units of the quadratic subfields $K_p, K_q, K_r$ are given respectively by
\ba
\varepsilon_p = 2n+\sqrt{p}, \quad \varepsilon_q = \frac{4n+1+\sqrt{q}}2, \quad \varepsilon_r = (8n^2+2n-2)+\sqrt{r}.
\ea
Again, all the fundamental units above are totally positive, and by \eqref{eq:qil} we have $\iota(K_p) = \iota(K_q) = \iota(K_r)=1$, so the totally positive units are the only indecomposable elements in $\mc O_{K_p}^+, \mc O_{K_q}^+$, and $\mc O_{K_r}^+$. Using Kubota's criterion, we verify that $\mc O_K^{\times,+}$ is generated by $\varepsilon_p, \varepsilon_q, \varepsilon_r$. Via analogous computations we obtain the following theorem.

\begin{thm}\label{thm:f3} 
Let $n\ge 2$ be an integer such that $p=(2n-1)(2n+1)$, $q=(4n-1)(4n+3)$ are coprime, squarefree integers. Let $K = \Q(\sqrt{p},\sqrt{q})$, and $r=pq$. A complete set of indecomposable elements of $\mc O_K^+$ modulo totally positive units is given by
\ba
&1, \quad \frac{\varepsilon_p^{-1}+\varepsilon_r}2, \quad  \frac{\varepsilon_p+\varepsilon_r}2 + t\rb{\varepsilon_p-\varepsilon_q^{-1}} & (0\le t \le 2n-2),\\
&\rb{\varepsilon_q^{-1}-\varepsilon_p} + t\rb{\varepsilon_p\varepsilon_q-1} & (1\le t \le 2n-1).
\ea
In particular, we have $\iota(K) = 4n$.
\end{thm}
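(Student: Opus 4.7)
The plan is to follow the same methodology developed in detail for \Cref{thm:f1}. First, I would verify the preliminary data. The continued fraction expansions of $\sqrt{p}$, $\frac{-1+\sqrt{q}}{2}$, $\sqrt{r}$ each have period $2$, so by \eqref{eq:qil} the quadratic subfields satisfy $\iota(K_p) = \iota(K_q) = \iota(K_r) = 1$ with the listed fundamental units, each totally positive. Then, using Kubota's criterion \cite{Kubota1956} with the squarefree parts $\delta_p = \delta_{K_p}(\varepsilon_p)$, $\delta_q$, $\delta_r$ of the appropriate traces, I would check that no nontrivial product $\varepsilon_p^{i_1}\varepsilon_q^{i_2}\varepsilon_r^{i_3}$ lies in $K^2$ (using that $p,q$ are coprime and squarefree), which ensures $\mc O_K^{\times,+}$ is generated by $\varepsilon_p,\varepsilon_q,\varepsilon_r$.

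Next, I would embed $K$ into the Minkowski space $\R^4$ and use the Diaz y Diaz--Friedman decomposition \cite{DF2014} to cover a fundamental domain for the multiplicative action of $\mc O_K^{\times,+}$ by the six simplicial cones $C_{xyz} := \R_{\ge 0}\pb{1,\varepsilon_x,\varepsilon_x\varepsilon_y,\varepsilon_x\varepsilon_y\varepsilon_z}$, where $\cb{x,y,z}$ ranges over permutations of $\cb{p,q,r}$; verifying the non-overlap of the interiors is a sign-of-determinant computation using the explicit expressions for $\varepsilon_p\varepsilon_q$, $\varepsilon_p\varepsilon_r$, $\varepsilon_q\varepsilon_r$. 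Translating each cone by $\varepsilon_x^{-1}$ reduces the search for indecomposables to the six parallelepipeds $P_{xyz} := [0,1]\pb{\varepsilon_x^{-1},1,\varepsilon_y,\varepsilon_y\varepsilon_z}$, where every indecomposable distinct from the spanning units has all four coordinates strictly less than $1$.

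For each parallelepiped, the field $K$ being of type (2) (since $p \equiv 3$, $q \equiv 1 \pmod 4$) gives the integrality description \eqref{eq:fbib}. Writing a candidate $\alpha = a\varepsilon_x^{-1}+b+c\varepsilon_y+d\varepsilon_y\varepsilon_z$, the coefficient of whichever of $\sqrt{p},\sqrt{q},\sqrt{r}$ appears only in $\varepsilon_y\varepsilon_z$ pins down $d$ to a discrete set, and the remaining integrality relations then determine $(a,b,c)$ up to a single parameter $t$. This yields, in each parallelepiped, one or two finite one-parameter families of integer points, exactly as in \Cref{thm:f1}. For each point I would either exhibit an explicit decomposition into two totally positive summands (using the fundamental units and the previously found indecomposables), or prove indecomposability by contradiction: assuming $\alpha = \beta_1+\beta_2$ with $\beta_i \in \mc O_K^+$, the total positivity inequalities $x_i \pm |y_i|\sqrt{p} > 0$, $x_i \pm |z_i|\sqrt{q} > 0$, $x_i \pm |w_i|\sqrt{r} > 0$ together with analogues of \eqref{eq:qpr_s24}--\eqref{eq:qpr_s13} force contradictions when combined with sharp rational approximations of $\sqrt{p},\sqrt{q},\sqrt{r}$ (specifically $\frac{8n^2-2n-1}{4n-1} < \sqrt{p} < 2n$ and similar bounds for $\sqrt{q}$, $\sqrt{r}$).

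The main obstacle will be tuning the casework of the indecomposability arguments so that they hold down to $n = 2$, rather than requiring $n \ge 6$ as in \Cref{prp:qpri}; the rational estimates are proportionally sharper here because $q$ and $r$ grow faster in $n$, but the boundary indices $t = 2n-2$ and $t = 2n-1$ will still need separate treatment when the generic estimates just fail. After completing the six parallelepiped computations, identifying equivalences modulo totally positive units (for instance, $\frac{\varepsilon_p+\varepsilon_r}{2}$ and $\frac{\varepsilon_p^{-1}+\varepsilon_r}{2}$ differ by a totally positive unit factor, and various $\frac{1+\varepsilon_x\varepsilon_y}{2}$ reduce to $\frac{\varepsilon_p^{-1}+\varepsilon_r}{2}$) collapses the surviving list to the $4n$ representatives claimed.
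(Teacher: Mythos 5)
Your overall architecture is exactly the paper's: the paper proves \Cref{thm:f3} by the same pipeline as \Cref{thm:f1} (continued fractions and \eqref{eq:qil} for the subfield units, Kubota's criterion for $\mc O_K^{\times,+}$, the Diaz y Diaz--Friedman cone decomposition, the six translated parallelepipeds with the integrality condition \eqref{eq:fbib}, and total-positivity estimates against rational approximations), and the paper indeed only records the outcome ``via analogous computations''. However, two of your concrete claims are false and would derail the computation if followed literally.

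First, it is \emph{not} true that no nontrivial product $\varepsilon_p^{i_1}\varepsilon_q^{i_2}\varepsilon_r^{i_3}$ lies in $K^2$. Here $\delta_p = 2(2n+1)$, $\delta_q = 4n+3$, and $\delta_r$ is the squarefree part of $\Tr_{K_r/\Q}(\varepsilon_r+1) = 2(8n^2+2n-1) = 2(2n+1)(4n-1)$, so that $\delta_p\delta_q\delta_r = 4(2n+1)^2(4n-1)(4n+3) = 4(2n+1)^2 q \in qK^2 = K^2$; Kubota's criterion then gives $\varepsilon_p\varepsilon_q\varepsilon_r \in K^2$. The correct verification, modelled on the paper's treatment of the \Cref{thm:f1} family, is that $\sqrt{\delta_p\delta_q\delta_r}\,\sqrt{\varepsilon_p\varepsilon_q\varepsilon_r} = \sqrt{\delta_p\varepsilon_p}\,\sqrt{\delta_q\varepsilon_q}\,\sqrt{\delta_r\varepsilon_r}$ is totally positive while $\sqrt{\delta_p\delta_q\delta_r} = 2(2n+1)\sqrt{q}$ is not, so the unit $\sqrt{\varepsilon_p\varepsilon_q\varepsilon_r}$ is not totally positive and $\mc O_K^{\times,+} = \langle \varepsilon_p,\varepsilon_q,\varepsilon_r\rangle$ still holds. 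Note that your version would force $[\mc O_K^{\times,+} : (\mc O_K^\times)^2] = 8$, contradicting the index $4$ used for this family in \Cref{sect:fmr}.

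Second, your sample identification is wrong: $\frac{\varepsilon_p+\varepsilon_r}2$ and $\frac{\varepsilon_p^{-1}+\varepsilon_r}2$ are \emph{not} associated modulo totally positive units; the former is the $t=0$ member of the family $\frac{\varepsilon_p+\varepsilon_r}2 + t\rb{\varepsilon_p-\varepsilon_q^{-1}}$, and both occur as distinct classes in the final list. Merging them would yield $\iota(K) = 4n-1$, contradicting the theorem. (One can check directly that $(\varepsilon_p+\varepsilon_r)/(\varepsilon_p^{-1}+\varepsilon_r)$ is not an algebraic integer; for $n=2$ its trace is not in $\Z$.) The identifications that genuinely occur are of the shape $\frac{1+\varepsilon_p\varepsilon_r}2 = \varepsilon_p\cdot\frac{\varepsilon_p^{-1}+\varepsilon_r}2$, where a single unit rescales both summands simultaneously. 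With these two corrections, and the indecomposability casework retuned to hold for all $n\ge 2$ (with the boundary values of $t$ treated separately, as you anticipate), your plan coincides with the paper's proof.
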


\begin{rmk}
The families in \Cref{thm:f2,thm:f3} are also infinite. This can be proved using sieve arguments (see the remark after \Cref{thm:f1}).
\end{rmk}

\subsection{Minimal rank of universal lattices}\label{sect:fmr}

Using \Cref{prp:ufi}, we can derive easy upper bounds on the minimal rank of universal lattices for real biquadratic fields. It is known that for a biquadratic field $K$ the Pythagoras number $s(\mc O_K)$ of $K$ is at most $7$ \cite[Corollary 3.3]{KY2021}. It follows from the proof of \Cref{thm:fi} that we have $R(K) \le R_{\operatorname{cls}}(K) \le 56 \iota(K)$.

For the one-parameter families above, we are able to further improve this bound, because we can explicitly compute the size of the quotient $[\mc O_K^{\times,+}:(\mc O_K^\times)^2]$, using Kubota's criterion \cite{Kubota1956}.

\begin{cor}
Assume the setup in \Cref{thm:f1}. Then we have $R(K) \le R_{\operatorname{cls}}(K) \le 28(10n-15)$.
\end{cor}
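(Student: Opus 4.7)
The plan is to combine Proposition 3.4 (prp:ufi) with the explicit computation of the unit index carried out earlier in Section 5 for this particular family. Proposition 3.4 yields the bound
\[
R(K) \le R_{\operatorname{cls}}(K) \le s(\mc O_K) \cdot \#\mc S(\mc O_K),
\]
where $\mc S(\mc O_K)$ is a set of indecomposable elements of $\mc O_K^+$ modulo squares of units. So the task reduces to bounding $s(\mc O_K)$ and $\#\mc S(\mc O_K)$ separately.

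For the Pythagoras number, I would invoke \cite{KY2021}, which gives $s(\mc O_K)\le 7$ for every real biquadratic field $K$; no computation is needed for the family at hand. For the cardinality of $\mc S(\mc O_K)$, I would factor
\[
\#\mc S(\mc O_K) \;=\; [\mc O_K^{\times,+} : (\mc O_K^\times)^2] \cdot \iota(K).
\]
Since $[K:\Q]=4$, Dirichlet's unit theorem gives $[\mc O_K^\times : (\mc O_K^\times)^2] = 2^4 = 16$, just as in the proof of Theorem 1.4. The crucial input here is the description of $\mc O_K^{\times,+}$ already established in Section 5 for this family: the totally positive units are generated by $\varepsilon_p,\varepsilon_q,\varepsilon_r$, and $\mc O_K^\times/\mc O_K^{\times,+} = \{\pm 1,\pm\sqrt{\varepsilon_p\varepsilon_r}\}$, whence $[\mc O_K^\times : \mc O_K^{\times,+}] = 4$. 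Combining these gives
\[
[\mc O_K^{\times,+} : (\mc O_K^\times)^2] \;=\; \frac{[\mc O_K^\times : (\mc O_K^\times)^2]}{[\mc O_K^\times : \mc O_K^{\times,+}]} \;=\; \frac{16}{4} \;=\; 4.
\]

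Finally I would feed in the explicit value $\iota(K) = 10n-15$ furnished by Theorem 1.9, obtaining $\#\mc S(\mc O_K) \le 4(10n-15)$ and hence
\[
R(K) \le R_{\operatorname{cls}}(K) \le 7 \cdot 4(10n-15) = 28(10n-15).
\]
There is no real obstacle: the statement is a direct assembly of Proposition 3.4, the bound $s(\mc O_K)\le 7$ from \cite{KY2021}, and the unit-index calculation already performed earlier in Section 5. The only thing to notice is that the improvement over the generic bound $R_{\operatorname{cls}}(K) \le 56\iota(K)$ comes exactly from the fact that, for this family, $[\mc O_K^\times:\mc O_K^{\times,+}]=4$ rather than the pessimistic lower bound $2$ used in the proof of Theorem 1.4.
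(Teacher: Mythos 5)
Your proof is correct and follows essentially the same route as the paper: Proposition 3.4 with $s(\mc O_K)\le 7$ from \cite{KY2021}, the index computation $[\mc O_K^{\times,+}:(\mc O_K^\times)^2] = 16/4 = 4$ using the fact from Section 5 that $\mc O_K^\times/\mc O_K^{\times,+}=\{\pm 1,\pm\sqrt{\varepsilon_p\varepsilon_r}\}$, and $\iota(K)=10n-15$. The only slip is cosmetic: the value $\iota(K)=10n-15$ comes from Theorem 1.6 (the theorem labelled \texttt{thm:f1}), not Theorem 1.9.
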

\begin{proof}
For this family, we have seen in the beginning of the section that $[\mc O_K^\times:\mc O_K^{\times,+}] = 4$. It follows that $[\mc O_K^{\times,+}: (\mc O_K^\times)^2] = 4$. Combining this result with \Cref{thm:f1} and \Cref{prp:ufi} yields the bound.
\end{proof}

For the families in \Cref{thm:f2,thm:f3}, similar computations show that $[\mc O_K^{\times,+}: (\mc O_K^\times)^2] = 4$ as well. So we have the following:

\begin{cor}
Assume the setup in \Cref{thm:f2}. Then we have $R(K) \le R_{\operatorname{cls}}(K) \le 28(4n-1)$.
\end{cor}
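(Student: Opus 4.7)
The plan is to mirror the argument used for the preceding corollary (the one for the family of \Cref{thm:f1}), now feeding in the data provided for the family of \Cref{thm:f2}. The ingredients are all in place: an explicit enumeration of indecomposables modulo totally positive units (from \Cref{thm:f2}), the index $[\mc O_K^{\times,+}:(\mc O_K^\times)^2]=4$ asserted in the paragraph immediately above the corollary, and the universal lattice construction of \Cref{prp:ufi}.

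Concretely, I would proceed in three short steps. First, invoke \cite[Corollary~3.3]{KY2021} to bound the Pythagoras number $s(\mc O_K)\le 7$, valid for every biquadratic field. Second, use \Cref{prp:ufi}: if $\mc S(\mc O_K)$ is a set of representatives of indecomposable elements in $\mc O_K^+$ modulo $(\mc O_K^\times)^2$, then the diagonal form $\sum_{\sigma\in\mc S(\mc O_K)}\sigma(x_{1,\sigma}^2+\cdots+x_{s,\sigma}^2)$ defines a classical universal $\mc O_K$-lattice of rank $s(\mc O_K)\cdot \#\mc S(\mc O_K)$, hence
\[
R(K)\le R_{\operatorname{cls}}(K)\le s(\mc O_K)\cdot \#\mc S(\mc O_K)\le 7\cdot \iota(K)\cdot [\mc O_K^{\times,+}:(\mc O_K^\times)^2].
\]
Third, substitute $\iota(K)=4n-1$ from \Cref{thm:f2} and the index $[\mc O_K^{\times,+}:(\mc O_K^\times)^2]=4$ noted above the corollary; this yields $R_{\operatorname{cls}}(K)\le 7\cdot(4n-1)\cdot 4 = 28(4n-1)$, as claimed.

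There is really no obstacle left to negotiate at this point, since the only nontrivial computation, namely that the quotient $\mc O_K^{\times,+}/(\mc O_K^\times)^2$ again has order $4$, is already accomplished via Kubota's criterion in exact analogy with the family of \Cref{thm:f1}: one checks using the explicit fundamental units $\varepsilon_p,\varepsilon_q,\varepsilon_r$ of \Cref{sect:mbf} that they are totally positive and generate $\mc O_K^{\times,+}$, and that a genuine fundamental system of $\mc O_K^\times$ is obtained by replacing one of them by a square root of a product (e.g. $\sqrt{\varepsilon_p\varepsilon_r}$), which is not totally positive. Thus $[\mc O_K^\times:\mc O_K^{\times,+}]=4$, and since $[\mc O_K^\times:(\mc O_K^\times)^2]=2^4=16$, we obtain $[\mc O_K^{\times,+}:(\mc O_K^\times)^2]=4$. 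If anything is to be emphasized, it is this last index computation; the rest of the proof is a one-line substitution into the universal form of \Cref{prp:ufi}.
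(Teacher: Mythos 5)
Your proof is correct and follows exactly the paper's route: the paper obtains this corollary from the same chain $R(K)\le R_{\operatorname{cls}}(K)\le s(\mc O_K)\cdot\#\mc S(\mc O_K)\le 7\cdot[\mc O_K^{\times,+}:(\mc O_K^\times)^2]\cdot\iota(K)=7\cdot 4\cdot(4n-1)$, citing \cite[Corollary~3.3]{KY2021} for the Pythagoras bound, \Cref{prp:ufi} for the lattice construction, \Cref{thm:f2} for $\iota(K)=4n-1$, and a Kubota-criterion computation (which the paper only summarizes as ``similar computations'') for the index $4$. One small correction to your parenthetical example: for this family the product of fundamental units that becomes a square in $K$ is $\varepsilon_p\varepsilon_q\varepsilon_r$ rather than $\varepsilon_p\varepsilon_r$ (here $\delta_p\delta_q\delta_r=2(2n+1)\cdot(4n+1)\cdot 2(2n-1)(4n+1)=4p(4n+1)^2\in pK^{2}$, using $8n^2-2n-1=(2n-1)(4n+1)$), but since $\sqrt{\delta_p\delta_q\delta_r}=2(4n+1)\sqrt{p}$ is not totally positive the conclusion $[\mc O_K^\times:\mc O_K^{\times,+}]=4$, and hence your index computation $[\mc O_K^{\times,+}:(\mc O_K^\times)^2]=16/4=4$ and the final bound $28(4n-1)$, are unaffected.
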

\begin{cor}
Assume the setup in \Cref{thm:f3}. Then we have $R(K) \le R_{\operatorname{cls}}(K) \le 112n$.
\end{cor}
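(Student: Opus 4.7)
The plan is to combine \Cref{prp:ufi} (equivalently Proposition~7.1 of \cite{KT2023}) with the bound $s(\mc O_K) \le 7$ for biquadratic fields (due to Kim-Yoon), exactly as was done in the corollaries following \Cref{thm:f1} and \Cref{thm:f2}. Explicitly, writing $\mc S(\mc O_K)$ for a system of representatives of indecomposables in $\mc O_K^+$ modulo $(\mc O_K^\times)^2$, \Cref{prp:ufi} produces a classical universal lattice of rank $s(\mc O_K) \cdot \#\mc S(\mc O_K)$. Since $(\mc O_K^\times)^2 \sbe \mc O_K^{\times,+}$, every equivalence class of indecomposables modulo totally positive units splits into at most $[\mc O_K^{\times,+}:(\mc O_K^\times)^2]$ classes modulo $(\mc O_K^\times)^2$, so $\#\mc S(\mc O_K) \le \iota(K) \cdot [\mc O_K^{\times,+}:(\mc O_K^\times)^2]$. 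Given $\iota(K) = 4n$ from \Cref{thm:f3}, the target bound $R(K) \le R_{\operatorname{cls}}(K) \le 112n = 7 \cdot 4 \cdot 4n$ will follow once we show
\[
[\mc O_K^{\times,+}:(\mc O_K^\times)^2] = 4.
\]

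The key step, and the only genuinely non-routine one, is verifying this index equals $4$ via Kubota's criterion \cite{Kubota1956}, following the template used for the family in \Cref{thm:f1}. I would first compute the squarefree parts $\delta_p, \delta_q, \delta_r$ of $\Tr_{K_p/\Q}(\varepsilon_p+1), \Tr_{K_q/\Q}(\varepsilon_q+1), \Tr_{K_r/\Q}(\varepsilon_r+1)$, using the explicit fundamental units $\varepsilon_p = 2n+\sqrt{p}$, $\varepsilon_q = \frac{4n+1+\sqrt{q}}2$, $\varepsilon_r = (8n^2+2n-2)+\sqrt{r}$. For example, $\Tr_{K_p/\Q}(\varepsilon_p+1) = 4n+2$, and one checks $2n+1$ is squarefree since it divides the squarefree integer $p=(2n-1)(2n+1)$; analogous remarks apply to the factorisations $q=(4n-1)(4n+3)$ and $r=(4n-1)(2n+1)(8n^2+2n-3)$. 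Using Kubota's criterion, I would then check that no nontrivial product of the $\delta$'s is a rational square, which identifies a system of fundamental units of $\mc O_K^\times$ (of the form $\varepsilon_p, \varepsilon_q, \sqrt{\varepsilon_p\varepsilon_r}$ or an analogous expression) and shows that the totally positive units are generated by $\varepsilon_p, \varepsilon_q, \varepsilon_r$.

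Granted this, the index $[\mc O_K^{\times,+}:(\mc O_K^\times)^2]$ is computed exactly as for the family in \Cref{thm:f1}: with respect to the basis $\varepsilon_p,\varepsilon_q,\varepsilon_r$ of $\mc O_K^{\times,+}$, the subgroup $(\mc O_K^\times)^2$ is generated by $\varepsilon_p^2, \varepsilon_q^2, \varepsilon_p\varepsilon_r$, whose coordinate matrix has determinant $4$. Combining this with $s(\mc O_K) \le 7$ and \Cref{prp:ufi} then yields
\[
R(K) \le R_{\operatorname{cls}}(K) \le 7 \cdot 4 \cdot \iota(K) = 7 \cdot 4 \cdot 4n = 112n.
\]

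The main obstacle is simply the clerical work of handling the coprimality and parity conditions on the various $\delta_{\bullet}$'s needed for Kubota's criterion; conceptually there is nothing new beyond what was carried out for the families of \Cref{thm:f1} and \Cref{thm:f2}, as already remarked in the sentence preceding the statement of the corollary.
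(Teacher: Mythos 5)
Your proposal is correct and follows essentially the same route as the paper: verify via Kubota's criterion that $[\mc O_K^{\times,+} : (\mc O_K^\times)^2] = 4$, then combine $\iota(K) = 4n$ from \Cref{thm:f3} with the Pythagoras bound $s(\mc O_K) \le 7$ of \cite[Corollary 3.3]{KY2021} (Kala--Yatsyna, not Kim--Yoon) and \Cref{prp:ufi} to get $7\cdot 4\cdot 4n = 112n$. One detail your hedge ``or an analogous expression'' quietly absorbs: for this family the extra fundamental unit is $\sqrt{\varepsilon_p\varepsilon_q\varepsilon_r}$ rather than $\sqrt{\varepsilon_p\varepsilon_r}$, since $\delta_p\delta_q\delta_r = 4(2n+1)^2 q \in K^2$ while $\delta_p\delta_r$ has squarefree part $4n-1 \notin \{1,p,q,r\}$; the coordinate matrix of $\langle \varepsilon_p^2, \varepsilon_q^2, \varepsilon_p\varepsilon_q\varepsilon_r\rangle$ still has determinant $4$, so your index computation and the final bound go through unchanged.
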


\subsection{Some computational results}\label{sect:cr} 

Here we compute $\iota(K)$ for some real biquadratic fields $K = \Q(\sqrt{p},\sqrt{q})$ with small discriminant. We also provide the ratio $\frac{\log(\iota(K))}{\log(\Delta_K)}$ as reference. For the fields marked with an asterisk, there are ``extra'' totally positive units which are not products of totally positive units in the quadratic subfields. This partly accounts for their relatively low number of indecomposable elements compared to other fields of similar discriminants.
\ba
\scalebox{0.9}{$\begin{array}{c|c|c|c|c|c}
(p,q,r) & \iota(K_p) & \iota(K_q) & \iota(K_r) & \iota(K) & \frac{\log(\iota(K))}{\log(\Delta_K)}\\
\hline
(2,3,6)^* & 2 & 1 & 2 & 5 & 0.2532\\
(2,5,10)^* & 2 & 1 & 6 & 14 & 0.3577\\
(2,7,14)^* & 2 & 2 & 2 & 4 & 0.1722\\
(3,5,15) & 1 & 1 & 1 & 3 & 0.1342\\
(3,21,7) & 1 & 1 & 2 & 15 & 0.3056\\
(2,11,22)^* & 2 & 3 & 6 & 79 & 0.4879\\
(2,13,26)^* & 2 & 3 & 10 & 56 & 0.4334\\
(2,15,30) & 2 & 1 & 2 & 28 & 0.3480\\
(6,5,30) & 2 & 1 & 2 & 16 & 0.2896\\
(6,15,10)^* & 2 & 1 & 6 & 12 & 0.2595\\
(10,3,30)^* & 6 & 1 & 2 & 21 & 0.3180\\
(3,33,11) & 1 & 4 & 3 & 59 & 0.4175\\
(2,17,34)^* & 2 & 5 & 2 & 24 & 0.3235\\
(7,5,35) & 2 & 1 & 1 & 11 & 0.2426\\
(2,19,38)^* & 2 & 7 & 6 & 111 & 0.4687\\
(3,13,39)^* & 1 & 3 & 4 & 13 & 0.2540\\
(2,21,42)^* & 2 & 1 & 2 & 2 & 0.0676\\
(6,7,42) & 2 & 2 & 2 & 64 & 0.4058\\
(6,21,14) & 2 & 1 & 2 & 48 & 0.3778\\
(14,3,42) & 2 & 1 & 2 & 40 & 0.3600\\
(3,17,51)^* & 1 & 5 & 7 & 47 & 0.3620\\
(11,5,55)^* & 3 & 1 & 4 & 9 & 0.2037\\
(3,57,19) & 1 & 7 & 7 & 243 & 0.5059\\
(5,13,65)^* & 1 & 3 & 9 & 35 & 0.3197\\
(2,33,66) & 2 & 4 & 8 & 198 & 0.4742\\
\end{array}\quad
\begin{array}{c|c|c|c|c|c}
(p,q,r) & \iota(K_p) & \iota(K_q) & \iota(K_r) & \iota(K) & \frac{\log(\iota(K))}{\log(\Delta_K)}\\
\hline
(6,11,66) & 2 & 3 & 8 & 170 & 0.4605\\
(6,33,22)^* & 2 & 4 & 6 & 120 & 0.4293\\
(22,3,66) & 6 & 1 & 8 & 226 & 0.4861\\
(3,69,23)^* & 1 & 2 & 2 & 7 & 0.1731\\
(2,35,70)^* & 2 & 1 & 6 & 13 & 0.2276\\
(10,7,70) & 6 & 2 & 6 & 232 & 0.4833\\
(10,35,14) & 6 & 1 & 2 & 114 & 0.4203\\
(14,5,70) & 2 & 1 & 6 & 76 & 0.3843\\
(7,77,11)^* & 2 & 1 & 3 & 16 & 0.2419\\
(2,39,78) & 2 & 4 & 2 & 150 & 0.4362\\
(6,13,78)^* & 2 & 3 & 2 & 22 & 0.2691\\
(6,39,26) & 2 & 4 & 10 & 136 & 0.4277\\
(26,3,78) & 10 & 1 & 2 & 102 & 0.4027\\
(5,17,85)^* & 1 & 5 & 9 & 51 & 0.3373\\
(3,29,87) & 1 & 5 & 3 & 45 & 0.3252\\
(19,5,95)^* & 7 & 1 & 2 & 13 & 0.2159\\
(2,51,102)^* & 2 & 7 & 10 & 111 & 0.3917\\
(6,17,102)^* & 2 & 5 & 10 & 116 & 0.3954\\
(6,51,34) & 2 & 7 & 2 & 222 & 0.4494\\
(34,3,102) & 2 & 1 & 10 & 130 & 0.4049\\
(3,105,35)^* & 1 & 3 & 1 & 3 & 0.0909\\
(5,21,105) & 1 & 1 & 3 & 14 & 0.2185\\
(7,105,15) & 2 & 3 & 1 & 115 & 0.3928\\
(15,21,35) & 1 & 1 & 1 & 5 & 0.1332\\
(2,55,110) & 2 & 4 & 2 & 132 & 0.4011\\
\end{array}$}
\ea
The examples above suggest that biquadratic fields tend to have more indecomposable elements, if their quadratic subfields do. Based on this observation, we compile another list of biquadratic fields $K$, for which every quadratic subfield $F\sbe K$ satisfies $\iota(F) = 1$, and does not lie in the aforementioned families:
\ba
\begin{array}{c|c|c}
(p,q,r) & \iota(K) & \frac{\log(\iota(K))}{\log(\Delta_K)}\\
\hline
(3,5,15) & 3 & 0.1342\\
(35,357,255) & 38 & 0.2050\\
(35,285,399) & 32 & 0.1929\\
(143,1365,1155) & 98 & 0.2083\\
(143,1221,1443) & 92 & 0.2044\\
(195,8645,399) & 120 & 0.2073\\
(255,13685,483) & 150 & 0.2086\\
(195,1085,8463) & 150 & 0.2080\\
(323,3021,2703) & 158 & 0.2069\\
(323,3021,2703) & 152 & 0.2047\\
(399,1085,8835) & 179 & 0.2089\\
(195,957,20735) & 166 & 0.2057\\
\end{array} \quad 
\begin{array}{c|c|c}
(p,q,r) & \iota(K) & \frac{\log(\iota(K))}{\log(\Delta_K)}\\
\hline
(483,12765,1295) & 174 & 0.2017\\
(255,1221,34595) & 212 & 0.2070\\
(899,8277,7743) & 278 & 0.2043\\
(899,7917,8463) & 272 & 0.2031\\
(483,1677,89999) & 353 & 0.2111\\
(1023,24645,2915) & 256 & 0.1995\\
(1295,11877,11235) & 338 & 0.2033\\
(1295,11445,12099) & 332 & 0.2024\\
(1155,7917,20735) & 338 & 0.2026\\
(1443,3965,33855) & 401 & 0.2083\\
(1023,164021,1443) & 480 & 0.2130\\
(1023,5621,47523) & 450 & 0.2099\\
\end{array}
\ea

As a comparison, the biquadratic fields from \Cref{thm:f2,thm:f3} satisfy $\iota(K) \asymp \Delta_K^{1/8}$. This suggests that these fields are among those for which $\iota(K)$ are the smallest.

In view of the trends we found above, we expect \Cref{cnj:cnj} to be true. This however appears to be out of reach with current techniques, mainly because very little is known about the structure of indecomposable elements in biquadratic fields, and there is no known algorithm which finds these indecomposable elements efficiently.


\newcommand{\etalchar}[1]{$^{#1}$}

\end{document}